\documentclass[a4paper]{amsart}
\usepackage[
left=35truemm,
right=35truemm
]{geometry}

\usepackage{amsfonts, amssymb, amsmath, verbatim, amsthm, mathrsfs, amscd, enumerate, ascmac, fancyhdr, caption, framed, subfigure}

\usepackage{doi}

\usepackage{bbm}

\usepackage{hyperref}
\usepackage{bookmark}
\usepackage{physics}

\numberwithin{equation}{section}

\usepackage[ 
]
{hyperref} 
\usepackage{xcolor}

\usepackage{tikz}
\numberwithin{figure}{section}
\usetikzlibrary{positioning,intersections, calc, arrows,decorations.markings, angles}

\usepackage[nolists]{endfloat}

\def\d{\mathrm{d}}

\newcommand{\R}{\mathbb{R}}
\newcommand{\Z}{\mathbb{Z}}
\newcommand{\N}{\mathbb{N}}
\newcommand{\C}{\mathbb{C}}
\newcommand{\T}{\mathbb{T}}
\newcommand{\lam}{\lambda}
\newcommand{\supp}{\mathrm{supp}\thinspace}
\newcommand{\F}{\mathcal{F}}
\newcommand{\wdh}[1]{\widehat{#1}}

\newcommand{\ls}[3]{L^{#1}_{#2}(#3)}

\newcommand{\sq}[1]{\square_{#1}}

\usepackage{mathtools}
\DeclarePairedDelimiter\ang{\langle}{\rangle}

\usepackage{xcolor} 

\theoremstyle{plain}
\newtheorem{thm}{Theorem}[section]
\newtheorem*{thm*}{Theorem}
\newtheorem{prop}[thm]{Proposition}
\newtheorem{lem}[thm]{Lemma}
\newtheorem{cor}[thm]{Corollary}

\newtheorem{rem}[thm]{Remark}
\theoremstyle{definition}
\newtheorem{dfn}[thm]{Definition}

\title[Reverse square function estimates]{Reverse square function estimates for degenerate curves and its applications}

\author[Bulj]{Aleksandar Bulj}
\address[Aleksandar Bulj]{Department of Mathematics, Faculty of Science, University of Zagreb, Bijeni\v{c}ka cesta 30, 100000 Zagreb, Croatia}
\email{aleksandar.bulj@math.hr}
\author[Inami]{Kotaro Inami}
\address[Kotaro Inami]{Graduate School of Mathematics, Nagoya University, Furocho, Chikusaku, Nagoya, Japan}
\email{m21010t@math.nagoya-u.ac.jp}
\author[Shiraki]{Shobu Shiraki}
\address[Shobu Shiraki]{Department of Mathematics, Faculty of Science, University of Zagreb, Bijeni\v{c}ka cesta 30, 100000 Zagreb, Croatia}
\email{shobu.shiraki@math.hr}

\begin{document}
\date{\today}

\keywords{Reverse square function estimates, fractional Schr\"odinger equation, Strichartz estimates, modulation spaces, orthogonal systems.}
\subjclass[2020]{42B25 (primary), 42B37, 42B35, 35Q40 (secondary)}

\begin{abstract}
Building on the classical work of C\'{o}rdoba–Fefferman and the recent work of Schippa, we establish $L^4$ reverse square function estimates for functions whose Fourier support is contained in a $\delta$-neighborhood of the curve $\{(\xi,\xi^a): |\xi|\leq 1\}$ in $\R^2$, for all exponents $a\in(0,\infty)\setminus\{1\}$. As applications, we derive sharp $L^4$ Strichartz estimates on the one-dimensional torus for fractional Schrödinger equations and establish new local smoothing estimates in modulation spaces. In the latter application, orthogonal Strichartz-type estimates also play a crucial role.
\end{abstract}

\maketitle

\section{Introduction}
\subsection{Main result}

A simple but powerful way to analyze functions whose Fourier support is contained in a $\delta$-neighbourhood of a non-degenerate curve with in $\R^2$, such as a parabola, is to cover this neighbourhood by rectangles of size $\delta^{\frac12}\times\delta$ and decompose the Fourier support accordingly. Here, a curve is called non-degenerate if its curvature does not vanish anywhere, and degenerate otherwise. This is precisely the geometric setup underlying the classical C\'ordoba–Fefferman reverse square function estimate for the parabola \cite{Cordoba79, Cordoba82,Fefferman73}.

Very recently, Schippa considered degenerate curves of the form $\{(\xi,\xi^k):|\xi|\le1\}$ for integers $k\ge3$ \cite{schippa2024generalized} and $k=3$ \cite{Schippa2025quasi}, and obtained an analogous reverse square function estimate. The main difficulty in this degenerate setting is that the curvature is no longer uniform. In fact, a rectangle of size $\delta^{\frac1k}\times\delta$ captures the curve well near the origin, but becomes too large away from the origin, where the curvature increases. This phenomenon can be seen directly from a Taylor expansion of the curve.


To address this issue, Schippa introduced two types of localization: a location-dependent decomposition in \cite{schippa2024generalized} and a curved decomposition in \cite{Schippa2025quasi}. In the location-dependent decomposition, one takes a horizontal rectangle of size comparable to $\delta^{1/k}\times\delta$ around the origin, while elsewhere one take rectangles centered at $(\xi,\xi^k)$ whose tangential length is $\delta^{\frac12}|\xi|^{-\frac{k-2}{2}}$ and whose normal width is $\delta$. 
The curved localization, on the other hand, is defined by uniformly partitioning the domain of the first variable at scale $\delta^{1/k}$ . This is the approach we adopt in the present manuscript since it is uniform in scale and well suited to our applications. A more precise definition will be given shortly.

For $a>0$, denote the $\delta$-neighbourhood of the curve $\Gamma_a:=\{(\xi,\xi^a)\in \R^2:|\xi|\le1\}$ by
$\Gamma_a(\delta)$, i.e.
\[
\Gamma_a(\delta)
:= 
\{(\xi_1,|\xi_1|^a+\tau): |\xi_1|\leq1,\ |\tau|\leq \delta\}\subset \R^2.
\]
We denote by $\Theta_k\subset \R^2$ the collection of frequency pieces arising from either of the two localizations described above.

\begin{thm}[\cite{Cordoba79, Cordoba82, Fefferman73, schippa2024generalized, Schippa2025quasi}]
Let $k\geq2$ be an integer. For $F\in\mathcal{S}(\R^2)$ with $\supp \widehat{F}\subset \Gamma_k(\delta)$, we have 
\[
\|F\|_{L^4(\R^2)}
\lesssim
\Big\|\Big( \sum_{\theta\in\Theta_{k}} |F_{\theta} |^2 \Big)^\frac12\Big\|_{L^4(\R^2)}.
\]
Here, $F_\theta$ denotes $F$ with its Fourier support restricted to $\theta\in\Theta_k$.
\end{thm}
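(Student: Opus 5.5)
We expand the fourth power and reduce the estimate to a bounded-overlap statement for sumsets, as in C\'ordoba–Fefferman. Writing $\|F\|_{L^4}^4=\|F^2\|_{L^2}^2$ and $F=\sum_\theta F_\theta$, we get
\[
\|F\|_{L^4}^4=\Big\|\sum_{\theta,\theta'}F_\theta F_{\theta'}\Big\|_{L^2}^2=\sum_{\theta_1,\theta_2,\theta_3,\theta_4}\int \widehat{F_{\theta_1}}*\widehat{F_{\theta_2}}\;\overline{\widehat{F_{\theta_3}}*\widehat{F_{\theta_4}}}.
\]
The term indexed by $(\theta_1,\theta_2,\theta_3,\theta_4)$ vanishes unless $(\theta_1+\theta_2)\cap(\theta_3+\theta_4)\neq\emptyset$. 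The whole task is therefore a combinatorial claim. Call pairs $\{\theta_1,\theta_2\}$ and $\{\theta_3,\theta_4\}$ \emph{essentially equal} if $\theta_3,\theta_4$ are within $O(1)$ neighbours of $\theta_1,\theta_2$ in the natural ordering of the pieces. The claim is: whenever the sumsets $\theta_1+\theta_2$ and $\theta_3+\theta_4$ intersect, the two pairs are essentially equal, up to a bounded number of exceptions per pair.

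Granting the claim, we finish in two steps. First, Cauchy–Schwarz and the bounded overlap give
\[
\|F\|_4^4\lesssim\sum_{\theta_1,\theta_2}\|F_{\theta_1}F_{\theta_2}\|_2^2 .
\]
Second, the right-hand side equals $\int(\sum_\theta|F_\theta|^2)^2$. Near-diagonal terms are handled by replacing $\theta$ by a union of $O(1)$ neighbouring pieces, which is harmless.

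For the claim we use the curved decomposition. Here $\theta_j=\{(\xi,\xi^k+\tau):\xi\in I_j,|\tau|\le\delta\}$, with $I_j$ intervals of length $\delta^{1/k}$ (for $k=2$ this is $\delta^{1/2}$, the classical case). The claim amounts to an almost-injectivity statement. Suppose
\[
\xi_1+\xi_2=\xi_3+\xi_4+O(\delta^{1/k})\quad\text{and}\quad \xi_1^k+\xi_2^k=\xi_3^k+\xi_4^k+O(\delta).
\]
We must show that $\{\xi_1,\xi_2\}$ and $\{\xi_3,\xi_4\}$ agree to within $O(\delta^{1/k})$. Set $s=\xi_1+\xi_2$ and $d=\xi_1-\xi_2$. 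Then $\phi_s(d):=(\frac{s+d}{2})^k+(\frac{s-d}{2})^k$ is even in $d$, and we need $|\phi_s(d)-\phi_s(d')|\lesssim\delta$ to force $\big||d|-|d'|\big|\lesssim\delta^{1/k}$. The second condition only holds up to the perturbation $O(\delta^{1/k})$ in $s$. Its effect on $\phi$ is $|\partial_s\phi|\delta^{1/k}$, which can be much larger than $\delta$; this is exactly why the sumsets of pieces should be treated as curved sets rather than parallelograms. We would fix a point in the intersection, so that $s$ is common up to the true vertical constraint, and argue with exact sums of points.

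The main obstacle is the lower bound on $\phi_s$ in the degenerate regime. We need $|\phi_s(d)-\phi_s(d')|\gtrsim \big||d|-|d'|\big|^k$, uniformly in $s$. Expanding binomially,
\[
\phi_s(d)=2^{1-k}\sum_{j \text{ even}}\binom kj s^{k-j}d^{j}.
\]
For even $k$ every coefficient is positive, so $\phi_s$ is monotone in $|d|$, and convexity-type estimates on each monomial give $|d^j-d'^j|\ge\big||d|-|d'|\big|^j$. For odd $k$, negative $s$ causes sign issues. We would handle these by a case split on whether $|s|\lesssim |d|$ (then the $d^k$-type term dominates) or $|s|\gg|d|$ (then the $s^{k-2}d^2$ term dominates, and we use $|s|^{k-2}\ge|d|^{k-2}$). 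The $\delta^{1/k}$ error in $s$ must be absorbed in each case; we expect to verify this by comparing $\partial_s\phi$ with $\partial_d\phi$ on the relevant region. This quantitative injectivity is where the real work lies, and the rest is the standard C\'ordoba–Fefferman bookkeeping.
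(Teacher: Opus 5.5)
\textbf{Genuine gap for odd $k$.} Your framework matches the paper's: expand $\|F\|_4^4=\|F^2\|_2^2$, apply Cauchy--Schwarz against the maximal overlap of the sumsets $\theta+\theta'$, and control that overlap by fixing a point of the intersection so that $\eta_1+\eta_2=\eta_3+\eta_4$ holds exactly. Your binomial argument is also correct for even $k$. For odd $k$, however, the key inequality $|\phi_s(d)-\phi_s(d')|\gtrsim\bigl||d|-|d'|\bigr|^k$ uniformly in $s$ is false, and so is the combinatorial claim it is meant to give.

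Only even $j$ occur in the expansion, so for odd $k$ there is no $d^k$ term at all:
\[
\phi_s(d)=2^{1-k}\sum_{j\ \mathrm{even},\ j\le k-1}\tbinom{k}{j}s^{k-j}d^j ,
\]
and in particular $\phi_0\equiv 0$. Geometrically, $(\eta,\eta^k)+(-\eta,-\eta^k)=(0,0)$. So for every piece $\theta$ with interval $I$, the sumset of $\theta$ with a piece meeting $-I$ contains $\{0\}\times[-2\delta,2\delta]$, and in fact a common box of size $\sim\delta\times\delta$ at the origin. Thus about $\delta^{-1/k}$ sumsets overlap there, and the overlap constant is $\gtrsim\delta^{-1/k}$. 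No case split on $|s|\lesssim|d|$ versus $|s|\gg|d|$ can rescue this, because your regime $|s|\lesssim|d|$ is exactly where the claim fails. Note also that negative $s$ is not the problem: all coefficients carry the sign of $s$, so $|\phi_s(d)-\phi_s(d')|$ depends only on $|s|$.

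\textbf{The fix.} The missing step is the one the paper performs first. Split the pieces into those in $\{\xi_1\ge 0\}$, those in $\{\xi_1\le 0\}$, and the $O(1)$ pieces near the origin. Then use the triangle inequality in $L^4$, the trivial bound for the $O(1)$ middle pieces, and monotonicity of the square function in the set of pieces. The negative half reduces to the positive one via $(\xi_1,\xi_2)\mapsto(-\xi_1,-\xi_2)$, or via $(-\xi_1,\xi_2)$ for the paper's $\Gamma_k(\delta)$, which is built on $|\xi_1|^k$. For that convention and odd $k$, your polynomial $\phi_s$ does not even describe mixed-sign pairs, so the reduction is needed there too.

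On the positive half all $\eta_i\ge 0$, hence $s\ge\max(|d|,|d'|)$. For $|d|\ge|d'|$ every term of $\phi_s(d)-\phi_s(d')$ is nonnegative, and the $j=2$ term alone gives
\[
|\phi_s(d)-\phi_s(d')|\ \ge\ 2^{1-k}\tbinom{k}{2}\, s^{k-2}\bigl(d^2-d'^2\bigr)\ \ge\ 2^{1-k}\tbinom{k}{2}\bigl(|d|-|d'|\bigr)^{k}.
\]
This closes your argument for every integer $k\ge 2$.

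\textbf{Comparison with the paper.} The paper instead bounds the sublevel set of $t\mapsto t^a+(r-t)^a$ on $[0,r]$ by van der Corput, using only $|u''|\gtrsim r^{a-2}$, and then counts the pairs $(\omega,\omega')$ by a volume argument. Your binomial route is more elementary but tied to integer exponents. Both rely on first restricting to a half-line.
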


The purpose of the present manuscript is to establish an analogous reverse square function estimate for more general curves of the form $(\xi,\xi^a)$, not only for integer exponents but for all $a\in(0,\infty)\setminus\{1\}$. We also investigate the sharp $\delta$-loss in the reverse square function estimate arising from different decomposition scales, parametrized by $b>0$.

Let $\Omega=\Omega_b(\delta)$ be a $\ge 2\delta^{1/b}$-separated subset of $[-1,1]$. 
For a function $F:\R^2\to \C$ and $\omega\in\Omega_b(\delta)$, we define the function $F_{\omega}$ by
\[\widehat{F_{\omega}}(\xi):= \widehat{F}(\xi)1_{[\omega-\delta^{1/b},\omega+\delta^{1/b}]}(\xi_1).\]

Our main result is the following.
\begin{thm}\label{t:main}
Let $a\in(0,\infty)\setminus\{1\}$ and $b>0$. For $\delta>0$ and a function $F:\R^2\to\C$ with $\supp \widehat{F}\subset \Gamma_a(\delta)$, let $R_{a,b}(\delta)$ denote the smallest constant for which the inequality
\begin{equation}\label{e:main}
\|F\|_{L^4(\R^2)}
\le R_{a,b}(\delta)
\Bigl\| \Bigl(\sum_{\omega\in \Omega_b(\delta)} |F_{\omega}|^2\Bigr)^{\frac12} \Bigr\|_{L^4(\R^2)}
\end{equation}
holds for every $\ge 2\delta^{1/b}$-separated set $\Omega_b(\delta)$. Then
\[
R_{a,b}(\delta)\lesssim (\delta^{-1})^{\frac{\rho(a,b)}{4}},
\]
where
\begin{equation}\label{c:main}
\rho(a,b)
:=
\max\Bigl\{
0,\,
\frac1b-\frac1a,\,
\frac1b-\frac12
\Bigr\}.
\end{equation}
Moreover, this estimate is sharp up to the implied multiplicative constant.
\end{thm}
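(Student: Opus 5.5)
The plan is the classical Córdoba–Fefferman bi-orthogonality argument: an $L^4$ reverse square function estimate follows from a bounded-overlap count, and the whole difficulty is to compute that overlap number sharply in terms of $a,b,\delta$.

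\emph{Reduction to counting.} Write $|F|^2=\sum_{\omega,\omega'\in\Omega}F_\omega\overline{F_{\omega'}}$. By Plancherel,
\[
\|F\|_{L^4}^4=\Bigl\|\sum_{\omega,\omega'}\widehat{F_\omega}*\widetilde{\widehat{F_{\omega'}}}\Bigr\|_{L^2}^2 .
\]
The summand indexed by $(\omega,\omega')$ is supported in the difference set $D_{\omega,\omega'}=S_\omega-S_{\omega'}$, where $S_\omega=\Gamma_a(\delta)\cap\{|\xi_1-\omega|\le\delta^{1/b}\}$. Let $N$ be the maximal number of sets $D_{\omega,\omega'}$ containing a given point. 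Cauchy--Schwarz then gives
\[
\|F\|_4^4\le N\sum_{\omega,\omega'}\|F_\omega\overline{F_{\omega'}}\|_2^2=N\Bigl\|\Bigl(\sum_\omega|F_\omega|^2\Bigr)^{1/2}\Bigr\|_4^4 .
\]
Hence $R_{a,b}(\delta)\le N^{1/4}$, and it suffices to prove
\[
N\lesssim 1+\delta^{-\frac1b}\max\{\delta^{1/a},\delta^{1/2}\}\sim\delta^{-\rho(a,b)} .
\]

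\emph{Counting.} Fix a point $(u,v)$ lying in some $D_{\omega,\omega'}$. Writing $\eta$ for the first coordinate in $S_{\omega'}$, membership forces
\[
\xi_1-\eta=u+O(\delta^{1/b})\quad\text{and}\quad \phi_u(\eta):=|\eta+u|^a-|\eta|^a=v+O(\delta)
\]
(up to harmless enlargements). The first condition determines $\omega$ from $\omega'$ up to $O(1)$ choices. The pairs with $|u|\lesssim\delta^{1/b}$ contribute $O(1)$ to $N$ by separation. So $N\lesssim 1+\delta^{-1/b}|E_u|$, where $E_u=\{\eta\in[-1,1]:|\phi_u(\eta)-v|\lesssim\delta\}$ (slightly thickened), and the key claim is the sublevel bound
\[
|E_u|\lesssim\max\{\delta^{1/a},\delta^{1/2}\}\quad\text{uniformly in }u\text{ and }v .
\]
To prove it, split $[-1,1]$ into boundedly many intervals on which $\phi_u$ is monotone and $\operatorname{sgn}\eta$, $\operatorname{sgn}(\eta+u)$ are fixed; the reflection $\eta\mapsto-\eta-u$ exchanges the two terms, so $\phi_u$ has at most one critical point, near $\eta=-u/2$. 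Away from the points $\eta=0$ and $\eta=-u$, van der Corput's sublevel lemma with $|\phi_u''|\gtrsim |u|\cdot|\eta|^{a-2}$-type lower bounds (via the mean value theorem) gives measure $\lesssim\delta^{1/2}$ when the relevant derivatives are $\gtrsim1$. Near the degenerate points, compare directly with $|\eta|^a$: for $a>1$ the set where $|\eta|^a$ varies by at most $\delta$ has length $\delta^{1/a}$, and the scaling $\eta\mapsto\lambda\eta$ interpolates between the two regimes. For $a<1$ the function is steeper near $0$, so only the $\delta^{1/2}$ bound survives, which is consistent with $\max\{\delta^{1/a},\delta^{1/2}\}=\delta^{1/2}$ there.

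\emph{Main obstacle.} I expect the main obstacle to be making this sublevel estimate uniform in $u$ in the intermediate regime, where $|u|$ is comparable to the scale at which $|\eta|^a$ stops being flat. There the second-derivative lower bound degenerates, and one must instead use the first-derivative bound
\[
|\phi_u'(\eta)|\gtrsim|u|\,\max(|\eta|,|\eta+u|)^{a-2}
\]
and optimize over dyadic shells in $|\eta|$.

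\emph{Sharpness.} Take $\Omega$ to be a maximal $2\delta^{1/b}$-separated set and let $\widehat F$ be a smooth bump adapted to a rectangle on which $\Gamma_a(\delta)$ is essentially flat. In the case $a>1$, use $[-\delta^{1/a},\delta^{1/a}]\times[-\delta,\delta]$ near the origin. In either case, also use a $\delta^{1/2}\times\delta$ cap near $\xi_1=1/2$, where the curvature is $\sim1$. If the rectangle has length $L$, then $F$ is a single wave packet, and it splits into $M\sim L\delta^{-1/b}$ pieces $F_\omega$, each essentially a modulated packet on a set $M$ times larger. A direct computation gives
\[
\|F\|_4\sim M^{1/4}\Bigl\|\Bigl(\sum_\omega|F_\omega|^2\Bigr)^{1/2}\Bigr\|_4 .
\]
Hence $R_{a,b}\gtrsim\delta^{-(1/b-1/a)/4}$ and $R_{a,b}\gtrsim\delta^{-(1/b-1/2)/4}$, while a single piece gives $R_{a,b}\ge1$. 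These three lower bounds match $\rho(a,b)$.
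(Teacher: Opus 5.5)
\textbf{There is a genuine gap, and it is in your very first step.} Expanding $|F|^2=\sum_{\omega,\omega'}F_\omega\overline{F_{\omega'}}$ forces you to control the overlap of the \emph{difference} sets $D_{\omega,\omega'}=S_\omega-S_{\omega'}$. That overlap is not $O(\delta^{-\rho(a,b)})$.

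Every diagonal set $S_\omega-S_\omega$ contains the origin, so $N\ge\#\Omega$, which can be $\sim\delta^{-1/b}$. Your claim that the pairs with $|u|\lesssim\delta^{1/b}$ contribute $O(1)$ ``by separation'' is false. Separation only pins down $\omega$ once $\omega'$ is known, while for $u=0$ the condition $\phi_u(\eta)=v+O(\delta)$ places no restriction on $\eta$.

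Discarding the diagonal does not rescue the argument. For small $u\neq0$ one has $\phi_u'(\eta)\approx a(a-1)|\eta|^{a-2}u$, and the uniform bound $|E_u|\lesssim\max\{\delta^{1/a},\delta^{1/2}\}$ is simply false. For example, take $a=3$, $b=2$, $\Omega=2\delta^{1/2}\Z\cap[0,1]$ and $u=2\delta^{1/2}$. Then $(\omega'+u)^3-\omega'^3=u^3+O(c\delta)$ for all $\omega'\in\Omega\cap[0,c\delta^{1/4}]$. So the point $(u,u^3)$ lies in $\gtrsim\delta^{-1/4}$ of the sets $D_{\omega'+u,\omega'}$, whereas $\delta^{-\rho(3,2)}=\delta^{-1/6}$. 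Similarly, for $a\in(0,2)\setminus\{1\}$ and $1\le b<2$, taking $u=2\delta^{1/b}$ and $\eta\sim\frac12$ gives overlap $\sim\delta^{1-2/b}=\delta^{-2\rho(a,b)}$.

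So what you flag as the ``main obstacle'' is not a technicality that dyadic shells can repair. The difference-set overlap is genuinely larger than $\delta^{-\rho(a,b)}$, and the bound $R_{a,b}(\delta)\le N^{1/4}$ obtained this way cannot reach the stated exponent.

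\textbf{The fix.} Expand $F^2=\sum_{\omega,\omega'}F_\omega F_{\omega'}$ instead, as in C\'ordoba--Fefferman. First use the triangle inequality to split off $\omega\le-2\delta^{1/b}$ and $|\omega|<2\delta^{1/b}$, so that $\supp\widehat F\subset\{\xi_1>0\}$. The supports are then the \emph{sum} sets $\theta_\omega+\theta_{\omega'}$. For fixed $\xi$, the admissible $(\omega,\omega')$ lie within $2\delta^{1/b}$ of $L(\xi)=\{\zeta\ge0:\zeta_1+\zeta_2=\xi_1,\ \zeta_1^a+\zeta_2^a\in\xi_2+[-2\delta,2\delta]\}$. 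The governing function $g(t)=t^a+(\xi_1-t)^a$ on $[0,\xi_1]$ is convex or concave with $|g''|\gtrsim_a\xi_1^{a-2}$ throughout; it has none of the degeneration your $\phi_u$ has for small $u$. Van der Corput's sublevel lemma then shows that $L(\xi)$ consists of at most two segments of total length $\lesssim\xi_1\min(1,\delta^{1/2}\xi_1^{-a/2})\lesssim\max\{\delta^{1/a},\delta^{1/2}\}$. The volume count $N\lesssim\delta^{-2/b}|L(\xi)+B(0,3\delta^{1/b})|$ then gives $N\lesssim\delta^{-\rho(a,b)}$.

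\textbf{Sharpness.} Your sharpness argument is sound and differs from the paper's. The paper sums $\delta$-scale bumps at $(\omega,\omega^a)$ along an arithmetic progression and counts solutions of $\omega+\omega'=\zeta_1$. You instead cut one flat cap of length $L\in\{\delta^{1/2},\delta^{1/a}\}$ into $M\sim L\delta^{-1/b}$ slabs and compare $\|F\|_4\sim(L\delta)^{3/4}$ with $\|(\sum_\omega|F_\omega|^2)^{1/2}\|_4\lesssim M^{1/2}(\delta^{1/b}\delta)^{3/4}$, which is more direct. Take $\Omega$ to be a $2\delta^{1/b}$-spaced progression rather than an arbitrary maximal separated set, so that the intervals cover the cap and $F=\sum_\omega F_\omega$.
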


It is worth noting that although we decompose the function $F$ into pieces whose Fourier supports lie in vertical strips, the additional assumption that $F$ is Fourier supported in $\Gamma_a(\delta)$ introduces nontrivial geometric considerations. 

We now introduce further notation. 
For $a>0$ fixed and $\omega\in \Omega_b(\delta)$, define
\[\theta_\omega :=\bigl\{(\xi_1,\xi_2)\in \Gamma_a(\delta):\; |\xi_1-\omega|\leq \delta^{1/b}\bigr\}.\]
Observe that if $F$ is Fourier supported in $\Gamma_a(\delta)$, then $F_\omega$ is Fourier supported in $\theta_\omega$.

By Taylor's theorem, for $\xi_1=\omega\pm \delta^{1/b}$ we have
\[\xi_1^a-(\omega^a+a\omega^{a-1}(\xi_1-\omega))=  \frac{a(a-1)}{2}\delta^{2/b}+ O_a(\delta^{3/b}).\]
When $a\neq 1$, the right-hand side is comparable to $\sim_a \delta^{2/b}$. 
Consequently, when $b>2$, the set $\theta_\omega$ is not contained in any 
$O(\delta^{1/b})\times O(\delta)$ rectangle centered at $\omega$. 
In particular, the sets $\theta_\omega$ are genuinely curved. 
As a result, due to Fefferman's result \cite{Fefferman71}, one cannot define the decomposition of $F$ simply by cutting its Fourier transform with the indicator function $1_{\theta_\omega}$ and we cannot count the overlap of Minkowski sums of the form
$\theta_{\omega}+\theta_{\omega'}$ using properties of rectangles. 
For the special case $a=b=3$, Schippa \cite{Schippa2025quasi} exploited the algebraic structure of the curve $\Gamma_3$ to estimate these overlaps.
The main new ingredient in the proof of Theorem~\ref{t:main} is a counting argument that controls these overlaps using only the first two derivatives of the curve. 
This approach allows us to extend the result to all $a>0$ with $a\neq 1$.

It is worth noting that the constant in \eqref{e:main} becomes 
\[
\delta^{-\frac{\rho(a,b)}{4}}
=
\begin{cases}
    1 \qquad &  b\geq a\\
    \delta^{\frac14(\frac1a-\frac 1b)} \qquad & b<a.
\end{cases}
\]
when $a\geq  2$, and 
\[
\delta^{-\frac{\rho(a,b)}{4}}
=
\begin{cases}
    1 \qquad &  b\geq 2\\
    \delta^{\frac14(\frac12-\frac 1b)} \qquad & b<2
\end{cases}
\]
when $a\in(0,2)\setminus\{1\}$.

By Minkowski's inequality, Theorem~\ref{t:main} immediately yields the $L^4$ decoupling estimate for a function $F$ with Fourier support inside $\Gamma_a(\delta)$. 
\begin{equation}\label{e:decoupling form}
\|F\|_{L^4(\R^2)}
\lesssim
\delta^{-\frac{\rho(a,b)}{4}}
 \Big(\sum_{\omega\in\Omega_b(\delta)} 
\|F_{\omega} \|_{L^4(\R^2)}^2\Big)^\frac12.
\end{equation}
This is the simplest form of $\ell^2$ decoupling inequality. It is known that $\ell^2$ decoupling inequality famously implies the Vinogradov mean value theorem \cite{BDG_vinogradov}, which counts integer solutions to the Vinogradov system.

Motivated by this connection, Gressman et al. \cite{GGPRY21} investigated whether a converse phenomenon holds, namely whether bounds on the number of solutions to such arithmetic systems can, to some extent, imply decoupling or reverse square function estimates. They showed that this is indeed the case for extension operators associated with certain non-degenerate curves, such as the moment curve, in some specific regimes. Subsequently, Biggs--Brandes--Hughes \cite{BiggsBrandesHughes22} extended these results to include certain degenerate curves, such as $\xi \mapsto \xi^k$ for integers $k \ge 3$ in one spatial dimension by counting the integer solutions to the system up to the small perturbation 
\begin{align}\label{system a}
     \xi_1+\xi_2 &=\xi_3+\xi_4\nonumber \\
     \xi_1^a+\xi_2^a &=\xi_3^a+\xi_4^a
\end{align}
for $a=k$. They crucially exploit some algebraic structures. As mentioned earlier, Schippa \cite{Schippa2025quasi, schippa2024generalized} has recently treated the reverse square function estimate in the case $a=k$ as in Theorem~\ref{t:main}, where arithmetic computations remain available.

It is also worth mentioning at this point that, although the $\ell^2$-decoupling inequality associated with, at least, $a>1$ can be deduced from the parabolic case $a=2$ via so-called induction-on-scales argument (for more details, for instance, see \cite[Proposition~12.20]{Demeter}), it is, to the best of our knowledge, unclear whether an analogous reduction is available at the level of reverse square function estimates. For this reason, we adopt a more direct approach in the proof of our theorem.

In our case, of course, arithmetic advantages that arise from the integer exponent $a$ are no longer available and we therefore employ an analytic approach rather than relying on geometric or arithmetic considerations. 
In order to estimate the number of solutions to the perturbed system \eqref{system a}, 
we reinterpret the discrete counting problem as the problem of estimating the volume of a corresponding $\delta^{1-\frac ab}$-neighborhood of a continuous tube-like region, which can be obtained relatively easy by classical van der Corput's lemma.

The reverse square function estimate admits a broad range of applications, including the corresponding decoupling form. Below, we present two novel applications yielding new results for one-dimensional Strichartz estimates on the torus and for local smoothing estimates in modulation spaces.

\subsection{Strichartz estimates on the torus}

The first application concerns one-dimensional Strichartz estimates in the periodic setting. Our goal is to determine the minimal Sobolev regularity $s\ge 0$ for which
\begin{equation}\label{e:Strichartz periodic}
\bigl\|e^{it(-\partial_x^2)^{\frac a2}} f\bigr\|_{L^4_{t,x}([-1,1]\times\T)}
\lesssim
\|f\|_{H^s(\T)}.
\end{equation}

This problem was first proposed in the case $a=2$ by Fefferman and soon later solved by Zygmund \cite{Zygmund74}, who showed that \eqref{e:Strichartz periodic} holds if and only if $s\geq0$. 
Bourgain \cite{Bourgain_GAFA} further developed this type of estimate and applied it to the study of the local and global well-posedness for the periodic NLS. 
Motivated by models arising in quantum physics \cite{Laskin00,Laskin02} and biophysics \cite{KLS13}, the fractional case $a\in(1,2)$ has since been well understood as well. It is known that \eqref{e:Strichartz periodic} holds if and only if $s\geq \frac14(1-\frac a2)$.  
This result was first established up to the endpoint by Demirbas--Erdo\u{g}an--Tzirakis \cite{DET13}, and the endpoint case itself was proved very recently by Megretski--Skouloudis \cite{megretski_skouloudis2025}. 

As a direct application of our reverse square function estimate (in fact, its decoupling form \eqref{e:decoupling form} is sufficient for this purpose) we cover all the case $a\in (0,\infty)\setminus\{1\}$. 
The behavior naturally splits into two distinct regimes, namely the case $a\geq 2$ and the case $a<2$.
The crucial choice of scale\footnotemark in Theorem~\ref{t:main} is $b=a$.

\begin{cor}
\label{c:periodic_Strichartz}
We have the following. 
\begin{enumerate}[(i)]
    \item\label{item:a>2 Strichartz} For $a\in[2,\infty)$, 
    \[
    \bigl\|e^{ it (-\partial_x^2)^\frac a2} f(x)\bigr\|_{L_{t,x}^4([-1,1]\times \T)} 
    \lesssim
    \|f\|_{L^2(\T)}
    \]
    holds. 
    This result is sharp, in the sense that the estimate fails if the $L^2$ norm is replaced by $\|f\|_{H^s}$ for any $s<0$. 
    
    \item\label{item:a<2 Strichartz} For $a\in(0,2)\setminus\{1\}$,
        \[
    \bigl\|e^{ it (-\partial_x^2)^\frac a2} f(x)\bigr\|_{L_{t,x}^4([-1,1]\times \T)} 
    \lesssim
    \|f\|_{H^s(\T)}
    \]
    holds if and only if $s\geq \frac14(1-\frac a2)$.
\end{enumerate}
\end{cor}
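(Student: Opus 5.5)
We prove Corollary~\ref{c:periodic_Strichartz} by the standard rescaling argument, using the decoupling form \eqref{e:decoupling form} with $b=a$. It suffices to treat $f$ with frequencies in a dyadic block $\{N\le |n|<2N\}$, or $|n|\le N$; the general case follows by Littlewood--Paley summation on $\T$. For (ii) the gain $N^{-\varepsilon}$ needed to sum is not directly available at the endpoint. There we instead bound the full sum $|n|\le N$ at once, which the decoupling inequality permits since it does not require a dyadic block. The $\ell^2$ structure of the right-hand side then sums the blocks $\|P_N f\|_{L^2}^2 N^{2s}$ directly.

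\textbf{Reduction to $\R^2$.} Write $u(t,x)=\sum_{|n|\le N}\hat f(n)e^{i(nx+t|n|^a)}$. Rescale with $\xi=n/N$ and $\tau=|n|^a/N^a$, i.e.\ set $x=Nx'$ and $t=N^a t'$. Then the space--time frequencies lie on $\Gamma_a$ at points $\xi\in N^{-1}\Z$. We multiply by a smooth bump $\phi$ adapted to $[-1,1]\times[-\pi,\pi]$ and extend periodically in $x$ to a ball of size $\sim N\times N^a$ in the rescaled variables. The Fourier transform is then supported in a $\delta$-neighbourhood of $\Gamma_a$ with $\delta=N^{-a}$; this is what forces $b=a$. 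Indeed, the natural separation $2\delta^{1/b}$ should equal the lattice spacing $N^{-1}$, so each $F_\omega$ contains essentially one frequency $n$. Applying \eqref{e:decoupling form} with $\rho(a,a)$ gives a loss of $\delta^{-\rho/4}$. For $a\ge2$ we have $\rho(a,a)=\max\{0,\tfrac1a-\tfrac12\}=0$. For $a<2$ we get $\rho=\tfrac1a-\tfrac12$, hence a loss $N^{\frac a4(\frac1a-\frac12)}=N^{\frac14(1-\frac a2)}$, exactly the claimed regularity. Each single-frequency piece satisfies $\|F_\omega\|_{L^4}\sim|\hat f(n)|\cdot|\text{ball}|^{1/4}$. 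Undoing the normalisation, the box volumes cancel, giving $\|u\|_{L^4([-1,1]\times\T)}\lesssim N^{\rho a/4}\|f\|_{L^2}$. The delicate point is the time localisation. In rescaled variables the time interval $[-1,1]$ has length $N^{-a}$ rather than being a large ball. It is better to rescale only space ($\xi=n/N$, $x\mapsto Nx$) and time by $t\mapsto N^a t$, so that the region becomes $[-N^a,N^a]\times[-\pi N,\pi N]$. By $x$-periodicity we can stack $N^{a-1}$ copies when $a>1$, or use a weight, so as to work on an $N^a\times N^a$ square. The uncertainty principle then gives a $\delta=N^{-a}$ frequency neighbourhood. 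This bookkeeping, which differs for $a<1$, is the main technical obstacle. For $a<1$ we instead cover the $x$-period $N$ by balls of radius $N^a$ and apply decoupling on each, which is still legitimate.

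\textbf{Sharpness.} For $s<0$ in (i), take $f=e^{inx}$: the left side is $\sim1$ while $\|f\|_{H^s}\sim n^{s}\to0$. For (ii), take $f=\sum_{|n|\le N}e^{inx}$ and restrict to $|t|\le cN^{-a}$ and $|x|\le c/N$. There all phases $nx+t|n|^a$ are $\ll1$, so $|u|\sim N$ on a set of measure $N^{-1-a}$. This gives $\|u\|_{L^4}\gtrsim N\cdot N^{-(1+a)/4}=N^{\frac34-\frac a4}$, while $\|f\|_{H^s}\sim N^{s+\frac12}$. Hence we need $s\ge\frac14-\frac a4$, which is weaker than the claim. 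To reach the sharp exponent, use instead the region $|t|\lesssim N^{-a}\cdot N^{?}$ together with the wider set where $|n|^a t$ is approximately linear in $n$. Concretely, we take $f=\sum_{N\le n\le 2N}e^{inx}$; on $|t|\le c N^{1-a}\cdot N^{-1}$ the phase $t n^a$ is approximately linear with small curvature error $tN^{a-2}\cdot N^2\lesssim1$. This gives a Galilean-translated concentration, i.e.\ $|u|\sim N$ on a set of measure $N^{-1}\cdot N^{-a+... }$. We then optimise to recover $s\ge\frac14(1-\frac a2)$, matching the known counterexample of \cite{DET13}. We expect this computation to be routine.
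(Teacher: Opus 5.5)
Your sufficiency scheme (rescaling by $N$ and $N^a$, using periodicity to enlarge the $x$-range when $a>1$, and applying Theorem~\ref{t:main} with $b=a$, $\delta=N^{-a}$) is the paper's, and your counterexample for (i) is fine. The necessity half of (ii), however, is not proved, and the example you make concrete cannot prove it.

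For $f=\sum_{N\le n\le 2N}e^{inx}$, the quadratic part of $tn^a$ across the block has size $|t|N^{a}$, so coherence forces $|t|\lesssim N^{-a}$. Then $|u|\sim N$ only on a set of measure $\sim N^{-1-a}$. This gives $\|u\|_{L^4}\gtrsim N^{(3-a)/4}$ against $\|f\|_{H^s}\sim N^{s+1/2}$, i.e.\ once more only $s\ge\frac{1-a}{4}$. The parameter you must optimise is the \emph{length of the frequency window}, which you fixed at $N$.

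Take instead $f=\sum_{|n-N|\le M}e^{inx}$ with $M=cN^{1-a/2}\to\infty$. Writing $n=N+m$, one has $|n|^a=N^a+aN^{a-1}m+O(N^{a-2}M^2)$, and the error is $O(c^2)$ uniformly for $|t|\le 1$. Hence $|u(t,x)|\gtrsim M$ whenever $x+aN^{a-1}t$ lies within $c/M$ of $2\pi\Z$: a Dirichlet kernel travelling coherently for the whole unit time, on a set of measure $\sim M^{-1}$ in $[-1,1]\times\T$. So $\|u\|_{L^4}\gtrsim M^{3/4}$ while $\|f\|_{H^s}\sim N^sM^{1/2}$, forcing $N^s\gtrsim M^{1/4}\sim N^{\frac14(1-\frac a2)}$. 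This $f$ is exactly the arithmetic progression of $\sim\delta^{\frac12-\frac1b}$ frequencies from the sharpness part of Theorem~\ref{t:main} with $b=a$, $\delta=N^{-a}$. The paper obtains necessity by observing that every step of its sufficiency proof other than the reverse square function estimate is an equivalence up to constants, and that this $f$ saturates that estimate; your direct computation, once corrected, is a more transparent version of the same fact.

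There are two further problems in the sufficiency half of (ii). First, the endpoint. Littlewood--Paley summation needs no $N^{-\varepsilon}$ gain. Since $\Delta_j$ commutes with the propagator, apply the square function estimate on $\T$ at each fixed $t$, then Minkowski: $\|(\sum_j|g_j|^2)^{1/2}\|_{L^4}^2=\|\sum_j|g_j|^2\|_{L^2}\le\sum_j\|g_j\|_{L^4}^2$. Combined with the dyadic bounds this gives $\|u\|_{L^4}\lesssim(\sum_j 2^{2js}\|\Delta_jf\|_{L^2}^2)^{1/2}\sim\|f\|_{H^s}$ directly; this is what the paper does. Your replacement does not work: applying \eqref{e:decoupling form} to all frequencies $|n|\le N$ at once yields a single global factor $N^{s}$ in front of $\|f\|_{L^2}$. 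The decoupling constant does not depend on where a piece lies on the curve, and $N^s\|f\|_{L^2}$ is not bounded by $\|f\|_{H^s}$ when most of the mass of $f$ is at low frequency.

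Second, for $a<1$ do not cover the period by balls of radius $N^a$. Localising $x$ at scale $N^a$ smears each frequency over a $\xi_1$-interval of length $N^{-a}\gg N^{-1}$. The pieces $F_\omega$ are then no longer single-frequency pieces, and $\|F_\omega\|_{L^4}\sim|\hat f(n)|\,|B|^{1/4}$ fails. Use one weight $\eta_N(x)\eta_{N^a}(t)$ adapted to a full period times the time interval, whose Fourier transform lives on an $N^{-1}\times N^{-a}$ box, as the paper does.
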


\footnotetext{In the second regime $a\in(0,2)\setminus\{1\}$, the choice $b=2$, combined with Bernstein’s inequality, also leads to the same conclusion.}

\begin{rem}
    By using this Strichartz estimate with $a > 2$ and following the same strategy in \cite{hayashi2025uniqueness}, we can also show the uniqueness of weak $L^2$ solutions to the following the fractional logarithmic nonlinear Schr\"{o}dinger equations: 
    \begin{align*}
    \begin{cases}
        iu_{t}(x,t) - (-\partial_x^2)^{\tfrac a2}u(x,t) = \lam u\log(|u|^2)\quad (x,t) \in \T\times\R, \\
        u(x,0) = f(x)
    \end{cases}
\end{align*}
where $\lam\in \R \backslash \{0\}$. 
\end{rem}

\subsection{Application 2 (modulation spaces)}
Modulation spaces were first introduced in \cite{feichtinger1983modulation}. Although they were originally used in time-frequency analysis, it turned out that they were also useful for nonlinear partial differential equations; for instance, see \cite{wang_henryk2007}, \cite{guo2017almost}, \cite{oh_wang2020}, \cite{Schippa2022}, \cite{klaus2023}, and \cite{schippa2023infinite}. Modulation spaces are designed to treat the position coordinate and the frequency coordinate in a symmetric way. This is a main difference from Sobolev spaces and Besov spaces. Let us recall the definition of modulation spaces. 
\begin{dfn}[Modulation spaces]
    Let $\varphi\in C^{\infty}_{0}(\R^d)$ satisfy
    \begin{align*}
        \begin{cases}
            \mathrm{supp}\thinspace\varphi \subset [-1,1]^d\\
            {\displaystyle\sum_{k\in\Z^{d}}}\varphi(\xi-k) = 1$ for all $\xi\in\R^d
        \end{cases}
    \end{align*}
    and define
    \begin{equation*}
    \widehat{\square_{k}f}(\xi) := \varphi(\xi - k)\widehat{f}(\xi), \quad \xi\in\R^d. 
    \end{equation*}
    The (quasi) norm of modulation spaces is defined for any $f\in\mathcal{S}'(\R^d)$, $s\in\R$, and $p,q\in (0,\infty]$ as follows:
    \begin{equation*}
        \|f\|_{M^{s}_{p,q}(\R^d)} := \Bigl(\|\ev{k}^{sq}\|\square_{k}f(x)\|^q_{L^{p}_{x}(\R^d)}\Bigr)^{1/q}
    \end{equation*}
    where $\ev{k} := (1 + |k|^{2})^{\frac{1}{2}}$. For $s\in\R$, $p,q\in(0,\infty]$, modulation spaces are defined as  
    \begin{align*}
        M^{s}_{p,q}(\R^{d}) &:= \{f\in\mathcal{S}'(\R^{d})\thinspace;\thinspace \|f\|_{M^{s}_{p,q}} < \infty\}. \\
    \end{align*}
\end{dfn}
Note that modulation spaces are quasi-Banach spaces and do not depend on the choice of window function $\varphi$. 
Moreover, the modulation spaces enjoy embedding property in each indices: 
    Let $s_1,s_2\in \R$, and $p_1,p_2, q_1, _2\in [1,\infty]$ satisfy 
    \[
    s_1\geq s_2, \quad p_1\leq p_2,\quad q_1\leq q_2.
    \]
    Then, it holds that 
    \[
    M_{p_1,q_1}^{s_1} (\R^d)
    \hookrightarrow 
    M_{p_2,q_2}^{s_2} (\R^d).
    \]

One advantage of modulation space in nonlinear PDE is they can tract slowly decaying initial data. In other words, for $s\geq 0, \; p>2$, and $q\ge 1$, the modulation space $M^{s}_{p,q}(\R)$ is not contained in $H^{s}(\R)$.\footnote{As briefly mentioned in \cite{schippa2023infinite}, we can easily construct an example for this as follows: Let $B(0,n)$ denote the interval centered at the origin with radius $n$ and set $\chi_{1}\in C^{\infty}_{0}(\R)$ with $\supp \chi_{1}\subset B(0,1)$. Also, set $g_{n} := \chi_{1}* \chi_{B_{(0,n)}}$. Obviously, $g_{n}$ are supported on $B(0,n+1)$ and their Fourier transform are rapidly decaying functions. We also normalize these functions as 
$f_{n} := \frac{g_{n}}{\|g_{n}\|_{L^{p}}}$. This sequence violates the inclusion relation $M^{s}_{p,q} \subset H^{s}$. } This means that we can manage initial data with the mass $\|\cdot\|_{L^2}$ and the energy norm $\|\cdot\|_{H^1}$ are not necessarily finite. 


For the linear solution to the standard Schr\"{o}dinger equation, corresponding to the case $a=2$, Schippa \cite{Schippa2022} recently obtained some local smoothing estimates in modulation spaces, in the form of 
\begin{equation}\label{e:local smoothing modulation a=2}
    \|e^{-it\partial_x^2}f\|_{L_{t,x}^r([-1,1]\times\R)}
    \lesssim
    \|f\|_{M_{p,q}^s(\R)}.
\end{equation}
The essential results he obtained are 
$(p,q,r,s)=(6,2,6,\varepsilon),\, (4,2,4,0)$ for arbitrary small $\varepsilon>0$.
Although the argument applies in general dimensions, we restrict our attention to the one-dimensional case, which suffices for the purposes of the present paper. 
The main ingredient in \cite{Schippa2022} is the $\ell^2$-decoupling inequality.
Shortly thereafter, the second author further developed this approach using reverse square function estimates, in particular of C\'{o}rdoba--Fefferman type, and obtained $(p,q,r,s)=(2,4-\varepsilon', 4, \varepsilon)$ for arbitrary small $\varepsilon,\varepsilon'>0$.
Lu \cite{Lu2023} and Chen et al. \cite{chen2024smoothing} showed similar estimates in different settings.


It is also noteworthy that in Rozendaal--Schippa \cite{Rozendaal_Schippa2023} and Hassel et al. \cite{hassell2023function}, function spaces adapting to the decoupling estimate for the cone are proposed. In these function spaces, one can show the local smoothing type estimate by using the decoupling inequality. Although modulation spaces were made in the context of the time--frequency analysis, they naturally have the similar properties with such function spaces for the decoupling.



In the same spirit as \cite{inami2025}, we also obtain new local smoothing estimates in modulation
spaces by using our reverse square function estimates

\begin{cor}\label{c:local_smoothing_modulation}
    The following hold. 
    \begin{enumerate}[(i)]
    \item 
For 
\[
\begin{cases}
a\in(0,6)\setminus\{1\}\ \text{and}\  
s\in[\frac{2-a}{8},\frac14]\\
a \geq 6$\  \text{and}\  $s\in (-\frac12,\frac14), 
\end{cases}
\]
we have 
\begin{equation}\label{eq:localsmoothing_BLN}
\bigl\|e^{it(-\partial^2_x)^\frac a2} f\bigr\|_{L_{t,x}^4([-1,1]\times \R)}
\lesssim_\varepsilon
\|f\|_{M_{2,q}^s(\R)}
\end{equation}
with $q$ such that 
\[
\frac{1}{q}=\frac{a+1}{4a}-\frac{s}{a}.
\]
        \item For $a>2$, we have 
        \begin{equation}
            \bigl\|e^{it(-\partial_x^2)^{\frac a2}} f\bigr\|_{L_{t,x}^4([-1,1]\times \R)}
            \lesssim
            \|f\|_{M_{2,4}^0(\R)}.\label{eq:localsmoothing_Bilinear}
        \end{equation}

    \end{enumerate}
\end{cor}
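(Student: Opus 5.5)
The plan is to work dyadically in frequency, rescale each dyadic piece to the setting of Theorem~\ref{t:main}, and then control the resulting square function by Strichartz-type bounds for the unit-frequency pieces $u_k:=e^{it(-\partial_x^2)^{a/2}}\square_k f$. Throughout, $u:=e^{it(-\partial_x^2)^{a/2}}f$.

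\emph{Localization and rescaling.} Insert a smooth cutoff $\chi(t)$ adapted to $[-1,1]$. Then the space-time Fourier transform of $\chi u_k$ lives, up to Schwartz tails, in an $O(1)$-neighbourhood of $\{(\xi,|\xi|^a):|\xi-k|\le 1\}$. Split $f=\sum_N P_N f$ into dyadic pieces $|\xi|\sim N$ (with a separate trivial piece for $|\xi|\lesssim1$). Rescale the piece $P_Nf$ by $(\xi,\tau)\mapsto(\xi/N,\tau/N^a)$. This maps the $O(1)$-neighbourhood of the curve into $\Gamma_a(\delta)$ with $\delta\sim N^{-a}$. It also maps unit intervals in $\xi$ to intervals of length $N^{-1}=\delta^{1/a}$, so the natural choice in Theorem~\ref{t:main} is $b=a$. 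Undoing the scaling gives
\[
\|\chi P_N u\|_{L^4}\lesssim N^{\rho(a,a)\,a/4}\Bigl\|\Bigl(\sum_{|k|\sim N}|\chi u_k|^2\Bigr)^{1/2}\Bigr\|_{L^4},
\qquad \rho(a,a)=\max\{0,\tfrac1a-\tfrac12\}.
\]
The loss vanishes for $a\ge2$ and equals $N^{\frac14(1-\frac a2)}$ for $a<2$. Where convenient I would instead take $b$ as a free parameter and optimize it.

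\emph{Part (ii).} Write $\|\chi u\|_4^2=\|(\chi u)^2\|_{L^2}=\|\sum_{k,k'}\chi^2u_ku_{k'}\|_{L^2}$. The product $u_ku_{k'}$ is Fourier supported near $(k+k',\,|k|^a+|k'|^a)+O(1)$. The counting argument behind Theorem~\ref{t:main} (solutions of the perturbed system \eqref{system a}) shows these supports have bounded overlap, so
\[
\|\chi u\|_4^4\lesssim\sum_{k,k'}\|\chi^2 u_ku_{k'}\|_{L^2}^2.
\]
I bound each term as follows.
\begin{itemize}
\item \emph{Diagonal and near-diagonal pairs} ($|k-k'|\lesssim1$): Bernstein on unit frequency and unit time gives $\lesssim\|\square_kf\|_2^2\|\square_{k'}f\|_2^2$.
\item \emph{Separated pairs:} the bilinear transversality estimate gives
\[
\|\chi^2u_ku_{k'}\|_{L^2}^2\lesssim\frac{\|\square_kf\|_2^2\|\square_{k'}f\|_2^2}{\bigl|\,a\,\mathrm{sgn}(k)|k|^{a-1}-a\,\mathrm{sgn}(k')|k'|^{a-1}\bigr|}.
\]
\end{itemize}
It then remains to apply Schur's test to the kernel $w(k,k')=\min\{1,\,|\partial\phi(k)-\partial\phi(k')|^{-1}\}$. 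Same-sign pairs give $w\lesssim(|k-k'|\max(|k|,|k'|)^{a-2})^{-1}$, and opposite-sign pairs are even better. Hence
\[
\sup_k\sum_{k'}w(k,k')\lesssim1+\frac{\log\langle k\rangle}{\langle k\rangle^{a-2}}<\infty\quad\text{for }a>2,
\]
which yields $\|u\|_{L^4}^4\lesssim\sum_k\|\square_kf\|_2^4$, i.e.\ the $M^0_{2,4}$ bound. This Schur sum is the step that forces $a>2$.

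\emph{Part (i).} On each dyadic block, after the reverse square function estimate I control $\|\sum_{|k|\sim N}|\chi u_k|^2\|_{L^2_{t,x}}$ in two ways and interpolate.
\begin{itemize}
\item \emph{Orthonormal-system route.} Normalize $g_k=\square_kf/\|\square_kf\|_2$. The $g_k$ are essentially orthogonal, and an orthonormal Strichartz estimate (Frank--Sabin type, rescaled to frequency $N$) gives
\[
\Bigl\|\sum_k\lambda_k|e^{it(-\partial_x^2)^{a/2}}g_k|^2\Bigr\|_{L^2}\lesssim N^{\gamma}\|\lambda\|_{\ell^\beta}
\]
for suitable $\beta>1$ and $\gamma$ determined by scaling.
\item \emph{Crude route.} The triangle inequality combined with the diagonal Bernstein bound.
\end{itemize}
Taking $\lambda_k=\|\square_kf\|_2^2$ converts these into $\ell^{q}$ norms of $\|\square_kf\|_2$ with weights $N^{s}$. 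The two extremes of $s$ should correspond to the endpoint cases $s=\frac14$ (where $q=4$) and $s=\frac{2-a}8$ (where $q=\frac83$). For $a\ge6$ the lower constraint on $s$ changes to $s>-\frac12$, which I expect comes from the admissible range of the orthonormal estimate. Finally, Hölder in $N$ over the dyadic blocks sums the pieces, and the relation $\frac1q=\frac{a+1}{4a}-\frac sa$ is exactly the scaling condition that makes this dyadic sum close.

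\emph{Main obstacle.} The main obstacle is matching exponents in part (i): finding the right orthonormal Strichartz estimate for the dispersion $|\xi|^a$ at scale $N$, with sharp dependence of $N^\gamma$ on $a$, so that the loss $N^{\rho a/4}$ from Theorem~\ref{t:main} is exactly compensated and the endpoints come out as stated.
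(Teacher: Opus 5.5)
Your part (ii) is correct and is essentially the paper's argument. Both use the lossless unit-scale reverse square function estimate (Theorem~\ref{t:main} with $b=a$), Bernstein on the (near-)diagonal, the bilinear identity \eqref{e:bilinear} off it, and then Schur/Young. To make the overlap count exact, take $\chi$ with compactly supported $\widehat\chi$, as the paper does. Your kernel $(|k-k'|\max(|k|,|k'|)^{a-2})^{-1}$ is slightly sharper than the paper's $\langle k-k'\rangle^{1-a}$, but the threshold is the same, $a>2$.

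\textbf{The gap in part (i).} The relation $\frac1q=\frac{a+1}{4a}-\frac sa$ is critical, so each dyadic block gives a bound that does not decay in $N$. You then have to combine infinitely many blocks, whether by the triangle inequality or by Littlewood--Paley plus Minkowski (which leaves an $\ell^2_N$ sum) followed by H\"older in $N$. Either way the result cannot be controlled by the $\ell^q$ sum in $M^s_{2,q}$, since $q\ge\frac83>2$; at best you would get $M^{s+\varepsilon}_{2,q}$. In addition, an $L^2_{t,x}$ bound on the density corresponds to the pair $(4,4)$, which is not admissible for orthonormal Strichartz estimates. You therefore have to say how the bounded time interval is used. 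Finally, the endpoints of the $s$-range do not come from interpolating with a crude bound.

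\textbf{How the paper avoids this.} It proceeds in three steps.
\begin{enumerate}
\item It reduces to $\operatorname{supp}\widehat f\subset B(0,\lambda)$, rescales by $\lambda$, and applies Proposition~\ref{p:RSF_extension} to the whole function. This gives $\|u\|_{L^4(I\times\R)}\lesssim\|(\sum_k|u_k|^2)^{1/2}\|_{L^4(w_I\times\R)}$ uniformly in $\lambda$, so no dyadic sum remains.
\item Since the time weight is integrable, H\"older in $t$ passes to $L^{\tilde q}_tL^4_x$.
\item It applies the global $\dot H^s$ estimate of Theorem~\ref{t:BLN} with $r=4$ to $h_k=\square_kf$, split into $O(1)$ subfamilies with disjoint Fourier supports.
\end{enumerate}
This produces $s=\frac14-\frac a{\tilde q}$ and $2\beta=\frac{4\tilde q}{\tilde q+4}$, which is exactly $\frac1q=\frac14+\frac1{\tilde q}=\frac{a+1}{4a}-\frac sa$. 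The admissibility conditions $\tilde q\ge8$ and $\frac14+\frac a{\tilde q}<1$ become $s\ge\frac{2-a}8$ and $s>-\frac12$, and $\tilde q\le\infty$ gives $s\le\frac14$. This is where the switch at $a=6$ comes from.

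\textbf{The case $a<2$.} The loss $N^{(2-a)/8}$ you found for $a<2$ is real. It cannot be removed by another choice of $b$, because unit-width pieces force $b=a$. Note also that Proposition~\ref{p:RSF_extension} is stated only for $a\ge2$. In fact the stated range for $a<2$ is not attainable by any argument:
\begin{itemize}
\item Let $\widehat f$ be a bump of width $\ell=c\lambda^{1-a/2}$ at frequency $\lambda\gg1$.
\item The quadratic phase error on $|t|\le1$ is $O(\lambda^{a-2}\ell^2)=O(c^2)$.
\item Hence $|u|\gtrsim\ell$ on a tube of width $\sim\ell^{-1}$, so $\|u\|_{L^4([-1,1]\times\R)}\gtrsim\ell^{3/4}$.
\item On the other side, $\|f\|_{M^s_{2,q}}\lesssim\lambda^s\ell^{1/q}$.
\item At $s=\frac{2-a}8$, where $q=\frac83$, the ratio is $\gtrsim\lambda^{(2-a)/16}\to\infty$.
\end{itemize}
So for $a\in(0,2)\setminus\{1\}$ you must carry the extra factor $N^{(2-a)/8}$, and no argument reaches the stated lower endpoint.
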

The proof of \eqref{eq:localsmoothing_BLN} and \eqref{eq:localsmoothing_Bilinear} in Corollary~\ref{c:local_smoothing_modulation} consist of two crucial steps. The first is the reverse square function estimate (Theorem~\ref{t:main}). It is worth emphasizing that this step cannot be replaced by the decoupling estimate \eqref{e:decoupling form}. The second ingredient is the Strichartz estimates for orthogonal systems and their adapted version tailored to the local smoothing estimates in modulation spaces. The details will be discussed in the forthcoming section.

To conclude this section, we briefly mention an improvement of another previously known result. 

In \cite{Lu2023}, among other results including extensions to the more general setting of $\alpha$-modulation space, Lu proved that for $a>2$ and arbitrary small $\varepsilon>0$
\[
    \bigl\|e^{it(-\partial^2_x)^{\frac a2}}f(x)\bigr\|_{L^{4}_{t,x}([-1,1] \times \R)}
    \lesssim_{\varepsilon} 
    \|f\|_{M^{s+\varepsilon}_{4,2}(\R)}
\]
holds with $s=\frac{a-2}{8}$. The following result shows that the required regularity can in fact be taken arbitrarily close to~0 in this special case. 
In contrast to Corollary~\ref{c:local_smoothing_modulation}, we apply \eqref{e:decoupling form} and the local smoothing estimate in Sobolev spaces by \cite{rogers_seeger2010}.

\begin{cor}
For $a\geq 2$ and arbitrary small $\varepsilon>0$, we have
    \begin{equation}
            \bigl\|e^{it(-\partial_{x}^2)^{\frac a2}}f(x)\bigr\|_{L^{4}_{t,x}([-1,1] \times \R)}
            \lesssim_{\varepsilon} 
            \|f\|_{M^{\varepsilon}_{4,2}(\R)}. 
    \end{equation}\label{eq:localsmoothing_lu}
    \label{c:local_smoothing_modulation_lu}
\end{cor}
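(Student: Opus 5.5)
Our plan is to decompose $f$ in frequency into unit-scale pieces $\square_k f$, control each piece with a fixed-time-independent estimate at essentially no loss, and sum with the $\ell^2$ decoupling inequality \eqref{e:decoupling form}.

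\textbf{Step 1: decoupling.} Write $u=e^{it(-\partial_x^2)^{a/2}}f$ and split into dyadic frequency annuli $|\xi|\sim N$; the low-frequency part is trivial by Bernstein. Fix an annulus. After rescaling $\xi\mapsto \xi/N$ and $t\mapsto N^{a}t$, the space-time Fourier transform of $u$ (localized to $|t|\le1$ by a smooth cutoff) lies in an $N^{-a}$-neighbourhood of $\Gamma_a$ restricted to $|\xi|\sim1$, so $\delta=N^{-a}$. We choose $b=a$, so that the strips have width $\delta^{1/a}=N^{-1}$, which corresponds to unit frequency intervals in the original variables. For $a\ge2$ we have $\rho(a,a)=0$, so \eqref{e:decoupling form} applies without loss. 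Undoing the scaling then gives
\[
\|u_N\|_{L^4([-1,1]\times\R)}\lesssim\Bigl(\sum_{k}\|e^{it(-\partial_x^2)^{a/2}}\square_k f\|_{L^4(w)}^2\Bigr)^{1/2},
\]
with weights $w$ that are harmless. Strictly, we use the sharp cutoffs $1_{[\omega-\delta^{1/b},\omega+\delta^{1/b}]}$ together with the comparability of the smooth partition $\square_k$ and these cutoffs in $L^4$. This comparability follows from boundedness of the Hilbert transform, since the strips are vertical.

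\textbf{Step 2: each unit piece.} For $\square_k f$ with frequencies near $k\sim N$, we factor out the modulation $e^{ixk}$ and the linear-in-$\xi$ phase $t\,a|k|^{a-2}k(\xi-k)$, which is a Galilean-type translation. The remaining operator $e^{it(-\partial_x^2)^{a/2}}$ restricted to a unit frequency interval around $k$ then has a phase whose second derivative is about $N^{a-2}$. The key choice is to avoid estimating this piece trivially. Instead, we invoke the Rogers--Seeger local smoothing estimate in Sobolev spaces \cite{rogers_seeger2010}, which at $L^4$ loses only $\varepsilon$ derivatives relative to the fixed-time norm. This gives $\|e^{it(-\partial_x^2)^{a/2}}\square_kf\|_{L^4([-1,1]\times\R)}\lesssim_\varepsilon \langle k\rangle^{\varepsilon}\|\square_kf\|_{L^4}$. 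A simpler alternative, which avoids Rogers--Seeger, also works: a unit-frequency piece over $|t|\le1$ has a kernel that is an $L^1$-bounded translate of a bump up to scale $N^{a-1}$, but this route costs a power of $N$.

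\textbf{Step 3: sum.} Summing over dyadic $N$ with the $\varepsilon$-loss gives $\|u\|_{L^4}\lesssim\bigl(\sum_k\langle k\rangle^{2\varepsilon}\|\square_kf\|_{L^4}^2\bigr)^{1/2}=\|f\|_{M^{\varepsilon}_{4,2}}$, where we absorb the dyadic sum by slightly enlarging $\varepsilon$.

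\textbf{Main obstacle.} The hardest step is Step 2: getting an essentially lossless $L^4_{t,x}$ bound for a single unit-frequency piece at high frequency, where the dispersion over $|t|\le1$ spreads the piece over length $N^{a-1}$. We would first try the Rogers--Seeger estimate, applied to the modulated and translated piece. If that fails, we would try to verify that the translation structure preserves the $L^4_x$ norm up to an $\varepsilon$-loss.
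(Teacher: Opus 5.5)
Your proposal matches the paper's proof. The decoupling form \eqref{e:decoupling form} with $b=a$ (lossless, since $\rho(a,a)=0$ for $a\ge 2$) reduces matters to unit-frequency pieces $\square_k f$. Each piece is controlled by Rogers--Seeger local smoothing \cite{rogers_seeger2010} with an $\langle k\rangle^{\varepsilon}$ loss, and the pieces then sum to $\|f\|_{M^{\varepsilon}_{4,2}}$. Two details need adding. Rogers--Seeger in one dimension requires $p>4$, so the $\varepsilon$-loss at $p=4$ comes from interpolating with the trivial $L^2$ bound. The Schwartz time weight is absorbed as in Lemma~\ref{lem:nonwetght_to_weight}.

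Drop the ``simpler alternative'' in Step 2 and the fixed-time fallback in your last paragraph. For $a>2$, a unit-frequency piece at frequency $N$ disperses over an interval of length $\sim N^{a-2}$ by time $|t|\sim 1$, so its kernel has $L^1$ norm $\sim N^{(a-2)/2}$. Any fixed-time $L^4_x$ argument therefore genuinely loses a power of $N$; this is precisely the fixed-time step from \cite{Lu2023} that the paper replaces by local smoothing. The time averaging is essential here.
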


In \cite{Lu2023}, a local well-posedness result for the fourth order cubic nonlinear Schr\"{o}dinger equation was proved by using the above local smoothing estimate. As a consequence of Corollary \ref{c:local_smoothing_modulation_lu}, we can also refine this local well posedness result (see Subsection \ref{subsection wellposedness}).

\subsection{Orthogonal Strichartz estimates}
The classical Strichartz estimate (for a single initial data) asserts that for $a\in \R\setminus\{0,1\}$, $(q,r)$ satisfying $\frac1q\leq \frac12(\frac12-\frac1r)$
and $s=\frac12-\frac1r-\frac aq$ it holds that 
\[
\bigl\|e^{it(-\partial_x^2)^{\frac a2}} h\bigr\|_{L_t^qL_x^r(\R^2)}
\lesssim
\|h\|_{\dot{H}^s(\R)}.
\]
For further details, see for instance \cite{COX10}.

A big stream of generalization of the Strichartz estimate that is somehow related to the topic of the reverse square function estimates as mentioned in the previous section is that version formulated as 
\begin{equation}\label{e:orthogonal general}
    \Big\| \Big( \sum_k |e^{it(-\partial_x^2)^{\frac a2}} h_k |^2\Big)^\frac12 \Big\|_{L_t^qL_x^r(\R\times\R)}
    \lesssim
    \Big( \sum_k \|h_k\|_{\dot{H}^s} ^{2\beta}\Big)^{\frac{1}{2\beta}}
\end{equation}
for all orthogonal functions $(h_k)_k$ in $\dot{H}^{s}(\R)$. 
The line of research surrounding \eqref{e:orthogonal general} was initiated by Frank et al. \cite{FLLS} in the non-degenerate case $a=2$, and subsequently developed in a number of works. The interested reader may consult \cite{BHLNS,FS_AJM,FS_Survey} and \cite{BNS_selecta,BKS_TLMS,Lieb83} for the boundary cases.
Numerous variations have been studied in recent years, including Strichartz estimates on the torus by Nakamura \cite{Nakamura}, but we do not attempt to survey other examples here.

In their study of the fractional Schr\"odinger propagator, including higher-dimensional cases, Bez--Lee--Nakamura \cite{BLN_Forum} obtained several results. In what follows, we state only the one-dimensional spatial case, which is directly relevant to our analysis.

\begin{thm}[\cite{BLN_Forum}]\label{t:BLN}
For $a\in (0,\infty)\setminus\{1\}$ and $p,r\geq 2$, suppose $a,p,q$ satisfy 
\[
\frac1q\leq \frac12(\frac12-\frac1r),
\qquad 
\frac1r+\frac aq <1.
\]
Then, the estimate \eqref{e:orthogonal general} holds
for all orthogonal functions $(h_k)_k$ in $\dot{H}^s$ whenever 
\[
s
=
\frac12-\frac1r-\frac aq,
\qquad 
\beta
=
\frac{qr}{2(q+r)}
\]
\end{thm}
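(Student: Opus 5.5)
The plan is to follow the Frank--Sabin duality principle, as in \cite{BLN_Forum}. By the standard Schatten-duality argument, \eqref{e:orthogonal general} for orthonormal $(h_k)_k$ in $\dot H^s$, with exponent $\beta$, is equivalent to the Schatten bound
\[
\|W T T^* \overline{W}\|_{\mathfrak{S}^{\beta'}}\lesssim \|W\|_{L^{2\tilde q}_tL^{2\tilde r}_x}^2,\qquad \tfrac{1}{\tilde q}=1-\tfrac{2}{q},\ \tfrac1{\tilde r}=1-\tfrac2r .
\]
Here $T h=e^{it(-\partial_x^2)^{a/2}}|D|^{-s}h$. Note that $2\beta=\frac{qr}{q+r}$, which is exactly the relation dictating $\beta'$ in this duality. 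The operator $TT^*$ is space-time convolution with the kernel
\[
K(t,x)=\int_{\R} e^{i(x\xi+t|\xi|^a)}|\xi|^{-2s}\,\d\xi .
\]

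The key steps, in order, are as follows.

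First, embed $TT^*$ in an analytic family $T_z$, with multiplier $|\xi|^{-z}$ (suitably normalised by a Gamma factor), on a strip of $\operatorname{Re} z$.

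Second, on the line $\operatorname{Re} z=0$, use Plancherel. This gives $\|W T_z \overline W\|_{\mathfrak S^\infty}\lesssim\|W\|_{L^\infty}^2$, with at most exponential growth in $\operatorname{Im} z$.

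Third, on the line $\operatorname{Re} z=\lambda$ with $\lambda\in(0,1)$, find the kernel bound
\[
|K_z(t,x)|\lesssim |t|^{-\frac{1-\lambda}{a}}\ \text{(scaling together with van der Corput)},\quad\text{or more precisely } |K_z(t,x)|\lesssim |t|^{-\gamma}|x|^{-\mu}.
\]
The exponents must be chosen so that the Hardy--Littlewood--Sobolev inequality in $t$ and in $x$ yields a Hilbert--Schmidt ($\mathfrak S^2$) bound $\|W T_z\overline W\|_{\mathfrak S^2}\lesssim\|W\|^2_{L^{2\tilde q}L^{2\tilde r}}$. This uses the mixed-norm HLS inequality applied to $\int\!\!\int |W(t,x)|^2|K_z(t-t',x-x')|^2|W(t',x')|^2$.

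Fourth, apply Stein interpolation in the Schatten scale between $\mathfrak S^\infty$ and $\mathfrak S^2$ to obtain $\mathfrak S^{\beta'}$ at $z=2s$. The numerology $s=\frac12-\frac1r-\frac aq$ comes from scaling.

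The main obstacle is the third step, specifically the pointwise kernel estimate for $K_z$ for general $a\neq1$, $a>0$. The phase $x\xi+t|\xi|^a$ has a stationary point where $|\xi|^{a-1}\sim |x|/|t|$. Near this point I would use second-derivative van der Corput, with $|\phi''|\sim |t||\xi|^{a-2}$, after a dyadic decomposition in $|\xi|$. Away from it I would integrate by parts. This should give
\[
|K_z(t,x)|\lesssim |t|^{-\frac{1-\operatorname{Re} z}{a}}\min\Bigl(1,\bigl(|x|/|t|^{1/a}\bigr)^{-\sigma}\Bigr)
\]
for a suitable $\sigma$. The condition $\frac1r+\frac aq<1$ guarantees $\operatorname{Re} z<1$ at the relevant endpoint, which keeps the dyadic sum convergent near $\xi=0$. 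The condition $\frac1q\le\frac12(\frac12-\frac1r)$ is where HLS becomes admissible, namely that the exponents lie in $(0,1)$. If the mixed $(t,x)$ HLS fails at the boundary, I would first prove the diagonal case and then interpolate with the trivial $\beta=1$ estimate, obtained from the triangle inequality and the classical Strichartz estimate.
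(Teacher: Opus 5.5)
\textbf{This theorem has no proof in the paper to compare against.} It is quoted from \cite{BLN_Forum}, and the text only outlines the standard route: Frank--Sabin duality to a Schatten bound, endpoint bounds at $\beta=1,2,\infty$ from oscillatory-integral estimates, then interpolation. Your overall architecture matches that outline. Your Step~3 (kernel bounds $|t|^{-\gamma}|x|^{-\mu}$ fed into a two-variable Hardy--Littlewood--Sobolev bound for the $\mathfrak{S}^2$ norm) is in the spirit of Bez--Lee--Nakamura's weighted oscillatory integral estimates.

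\textbf{The gap is the analytic family, and Step~2 is false.} If the parameter enters only through the weight $|\xi|^{-z}$ \emph{along} the curve, then for every $z$ the operator $T_z$ is space-time convolution with a kernel whose Fourier transform is $|\xi|^{-z}\delta(\tau-|\xi|^a)$, a singular measure. Unimodularity of $|\xi|^{-i\gamma}$ on $\operatorname{Re}z=0$ does not make it bounded on $L^2_{t,x}$: Plancherel in $x$ controls each fixed $t-t'$, but not the $t'$-integral. Concretely, take $W=1_{[0,N]}(t)$ and $F=1_{[0,N]}(t)\,e^{it(-\partial_x^2)^{a/2}}g$. Then $WT_{i\gamma}\overline{W}F=N\,1_{[0,N]}(t)\,e^{it(-\partial_x^2)^{a/2}}|D|^{-i\gamma}g$, so $\|WT_{i\gamma}\overline{W}\|_{\mathfrak{S}^\infty}\ge N$ while $\|W\|_{L^\infty}=1$. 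The same computation shows that no line $\operatorname{Re}z=c$ of this family carries an $\mathfrak{S}^\infty$ bound.

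\textbf{There is also a structural problem in Step~4.} Setting the interpolation parameter equal to the Sobolev exponent ties the Schatten exponent to $s$ alone. Interpolating between $\operatorname{Re}z=0$ and $\operatorname{Re}z=\lambda$ at $z=2s$ gives $\mathfrak{S}^{\lambda/s}$, which has no reason to equal $\beta'$ for $\beta=\frac{qr}{2(q+r)}$. (The duality holds for every $\beta$; the value of $\beta$ has to come out of the interpolation.) It also gives nothing for $s\le 0$, which the statement allows. For $a\ge 2$ on the line $\frac aq=\frac12-\frac1r$ one has $s=0$ and the claimed $\beta>1$, yet $z=0$ lies on the $\mathfrak{S}^\infty$ line, which corresponds to $\beta=1$. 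For suitable $a>2$ one has $s\in(-\frac12,0)$, and $z=2s$ lies outside the strip altogether.

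\textbf{The missing idea is the Frank--Sabin family \emph{transversal} to the curve.} One choice is kernels
$\frac{t_+^{\zeta}}{\Gamma(\zeta+1)}\int_{\R}e^{i(x\xi+t|\xi|^a)}w_\zeta(\xi)\,d\xi$, with $w_0=|\xi|^{-2s}$ and $w_\zeta$ unimodular on $\operatorname{Re}\zeta=-1$ (for instance $w_\zeta=|\xi|^{-2s(\zeta+1)}$).
\begin{itemize}
\item On $\operatorname{Re}\zeta=-1$ the space-time multiplier is a purely imaginary power of $\tau-|\xi|^a\mp i0$ times a unimodular factor. This is a bounded function with $e^{C|\operatorname{Im}\zeta|}$ growth, and it gives the $\mathfrak{S}^\infty$ endpoint.
\item At $\zeta=0$ one gets the retarded part of $TT^*$, with kernel $1_{t>t'}K(t-t',x-x')$. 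This suffices, because $TT^*$ is this operator plus its adjoint.
\item On $\operatorname{Re}\zeta=\lambda>0$, your weighted kernel bounds and double HLS give the $\mathfrak{S}^2$ endpoint.
\end{itemize}
The Schatten exponent at $\zeta=0$ is then $2(1+\lambda)$. It is fixed by the HLS numerology in $(q,r)$ rather than by $s$.
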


Let $h_k=\square_kf$ and replace the measure $\d t$ by $w(t)\d t$, where $w(t)=(1+|t|)^{-100}$. This modification is motivated by applications to the local smoothing estimates in the modulation spaces discussed in the previous section, after applying our reverse square function estimates. In this setting, we obtain the following estimate, which is analogous to Theorem~\ref{t:BLN}. 
\begin{prop}\label{t:orthogonal new}
    For $a>2$, we have 
    \begin{equation}\label{e:orthogonal new}
    \Big\| \Big( \sum_k |e^{it(-\partial_x^2)^{\frac a2}} \square_kf |^2 \Big)^\frac12 \Big\|_{L^4_{t,x}(w\times\R)}
    \lesssim
    \Big( \sum_k \| \square_k f\|_{L^2}^4\Big)^\frac14.
    \end{equation}
\end{prop}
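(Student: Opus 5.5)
Write $u_k := e^{it(-\partial_x^2)^{a/2}}\square_k f$. The plan is to expand the fourth power of the square function, so that the left-hand side of \eqref{e:orthogonal new} becomes
\[
\int\!\!\int \Big(\sum_k |u_k|^2\Big)^2 w(t)\,\d x\,\d t=\sum_{k,l}\int\!\!\int |u_k|^2|u_l|^2\, w(t)\,\d x\,\d t .
\]
The goal is then a bilinear estimate
\[
\| u_k u_l\|_{L^2(w\times\R)}^2\lesssim c_{k,l}\,\|\square_k f\|_{L^2}^2\|\square_l f\|_{L^2}^2,
\qquad \sup_k\sum_l c_{k,l}<\infty .
\]
Granting this, Schur's test (or simply Cauchy--Schwarz, $A_kA_l\le\frac12(A_k^2+A_l^2)$ with $A_k=\|\square_k f\|_{L^2}^2$) gives $\sum_{k,l}c_{k,l}A_kA_l\lesssim\sum_k A_k^2$. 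This is exactly the fourth power of the right-hand side of \eqref{e:orthogonal new}.

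For the bilinear bound I will use the standard transversality argument. Since $w(t)\lesssim\sum_{n\in\Z}(1+|n|)^{-100}1_{[n,n+1]}(t)$ and the propagator commutes with time translation (which preserves $\|\square_k f\|_{L^2}$), it suffices to work on a unit time interval, or equivalently to replace $w$ by a Schwartz weight $\psi(t)$ with compactly supported $\widehat\psi$. Taking the space-time Fourier transform, $\widehat{u_ku_l\psi}$ is supported near the sumset of the two arcs $\{(\xi,|\xi|^a):\xi\in[k-1,k+1]\}$ and $\{(\eta,|\eta|^a):\eta\in[l-1,l+1]\}$, thickened by $O(1)$ in $\tau$. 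By Plancherel,
\[
\|u_ku_l\psi\|_{L^2}^2\lesssim \int\!\!\int |\widehat{\square_k f}(\xi)|^2|\widehat{\square_l f}(\eta)|^2\, J(\xi,\eta)\,\d\xi\,\d\eta ,
\]
where $J$ is, after the usual Cauchy--Schwarz over the fibers, the size of the set of $(\xi',\eta')$ in the two unit intervals with $\xi'+\eta'=\xi+\eta+O(1)$ and $|\xi'|^a+|\eta'|^a=|\xi|^a+|\eta|^a+O(1)$. Along the fiber $\xi'+\eta'=\sigma$, the $\tau$-coordinate has derivative $a(|\xi'|^{a-1}\operatorname{sgn}\xi'-|\eta'|^{a-1}\operatorname{sgn}\eta')$. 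Its size is roughly $|k-l|\max(|k|,|l|)^{a-2}$ when $k,l$ have the same sign and are far apart, and at least $\max(|k|,|l|)^{a-1}$ when the signs are opposite. Hence the $O(1)$-neighbourhood condition has measure
\[
J\lesssim\min\bigl(1,\ (|k-l|\max(|k|,|l|,1)^{a-2})^{-1}\bigr).
\]
The diagonal and near-diagonal terms $|k-l|\le 2$ are handled directly by Bernstein ($L^\infty_x$ bounded by $L^2$ for unit frequency support) together with $L^2$ conservation. They give $c_{k,l}\lesssim1$ for $O(1)$ many $l$ per $k$.

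The main obstacle is summability in $l$. One gets $c_{k,l}\lesssim (|k-l|\langle \max(|k|,|l|)\rangle^{a-2})^{-1}$, and $\sum_l |k-l|^{-1}$ diverges logarithmically when $a=2$; this is exactly why $a>2$ is needed. For $a>2$ I will split the sum over $l$ into two ranges:
\begin{itemize}
\item[(i)] $|l|\ge 2|k|$: here the bound is $\lesssim |l|^{-1}|l|^{-(a-2)}=|l|^{1-a}$, which is summable since $a>2$.
\item[(ii)] $|l|<2|k|$: here it is $\lesssim |k|^{2-a}\sum_{0<|k-l|\le 3|k|}|k-l|^{-1}\lesssim |k|^{2-a}\log\langle k\rangle$, which is bounded.
\end{itemize}
Small $k$ cause no trouble. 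Near the origin, where the curvature degenerates (and for non-integer $a$, $|\xi|^a$ is not smooth at $0$), there are only $O(1)$ many such $k$, and they are covered by the Bernstein bound. I will need to check carefully that the fiber-measure bound holds uniformly, including the opposite-sign case, and that the $O(1)$ thickening coming from $\widehat\psi$ is compatible with the derivative bounds. Once these are verified, Schur's test completes the proof of Proposition~\ref{t:orthogonal new}.
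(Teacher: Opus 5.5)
Your argument is correct, and its skeleton matches the paper's. You expand the fourth power into $\sum_{k,l}\int|u_k|^2|u_l|^2w$, handle $|k-l|\le 2$ by Bernstein and $L^2$ conservation, bound the remaining pairs by a bilinear $L^2$ estimate whose coefficient decays in $|k-l|$, and sum using $a>2$.

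The differences lie in the bilinear step and in how pairs are organized.
\begin{enumerate}
\item[(1)] \emph{The paper's route.} It first uses the triangle inequality to split the square function into the groups $k\ge2$, $k\le-2$, $|k|\le2$, and then into residue classes mod $4$. As a result it only ever meets pairs of same-sign, well-separated blocks. For these it drops the weight ($w\le1$) and proves the estimate globally on $\R^2$ by the exact change of variables $(\xi,\eta)\mapsto(\xi-\eta,\xi^a-\eta^a)$, i.e.\ the identity \eqref{e:bilinear}. It keeps only the crude Jacobian bound $|\xi^{a-1}-\eta^{a-1}|\ge|\xi-\eta|^{a-1}$ and finishes with Young's inequality for the kernel $\langle k-l\rangle^{1-a}\in\ell^1$.
\item[(2)] \emph{Your route.} You localize to unit time intervals and run a C\'ordoba--Fefferman fiber count with an $O(1)$ thickening in $\tau$. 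This is more robust: no injectivity of the phase map is needed, and opposite-sign and near-origin pairs are treated in one stroke. It also records the sharper coefficient $(|k-l|\max(|k|,|l|,1)^{a-2})^{-1}$.
\item[(3)] \emph{Trade-off.} The paper's route is shorter and exact, and it needs the weight only for the diagonal terms.
\end{enumerate}

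Three small remarks on your write-up.
\begin{enumerate}
\item[(a)] Since $\max(|k|,|l|)\ge|k-l|/2$, your coefficient is already $\lesssim\langle k-l\rangle^{1-a}$. The split into ranges (i) and (ii) can therefore be replaced by a one-line Young/Schur bound.
\item[(b)] The Bernstein bound covers only pairs with $|k-l|\le2$. A small $k$ paired with large $|l|$ must go through the transversality bound; otherwise $\sum_l c_{k,l}$ would diverge. Your formula does cover these pairs, since $\partial_{\xi'}(|\xi'|^a+|\sigma-\xi'|^a)$ vanishes only when $\xi'=\sigma-\xi'$. The sentence attributing small $k$ to Bernstein should be corrected accordingly.
\item[(c)] A Schwartz $\psi$ with compactly supported $\widehat\psi$ cannot satisfy $|\psi|^2\gtrsim w$ globally. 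So the version to keep is the reduction to unit time intervals via time translation, not the ``equivalent'' single-weight version.
\end{enumerate}
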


This estimate lies outside the scope of Theorem~\ref{t:BLN} since it particularly implies 
\[
    \Bigl\| \Bigl( \sum_k |e^{it(-\partial_x^2)^{\frac a2}} \sq{k}f |^2 \Bigr)^\frac12 \Bigr\|_{L^4_{t,x}(w\times\R)}
    \lesssim
    \Bigl( \sum_k \| \sq{k} f\|_{L^2}^{2\beta}\Bigr)^\frac{1}{2\beta}
\]
for $a> 2$ and $\beta=\frac{2a}{a+1}<2$.

A standard approach to establishing estimates for orthogonal systems such as \eqref{e:orthogonal general} proceeds as follows.
First, one reformulates \eqref{e:orthogonal general} into an equivalent dual formulation involving appropriate Schatten class operators, equipped with the corresponding Schatten norms $\|\cdot\|_{\mathcal{C}^{\beta'}}$, where $\beta\in[1,\infty]$ is the same exponent appearing on the right-hand side of \eqref{e:orthogonal general}. One then establishes bounds in the special cases $\beta=1,2$ (and $\beta=\infty$, if necessary), where nice representations of the Schatten norms are available. In the context of Strichartz estimates, this is the stage at which oscillatory integral bounds, such as dispersive estimates, may be applied. Additional localizations are typically arranged at each step, and how these are implemented is one of the more delicate aspects of the argument. Finally, the desired bound for general $\beta$ is obtained by an appropriate interpolation argument between the cases $\beta=1,2,\infty$. Further details may be found in the references cited above.

By contrast, we employ a more direct approach to prove Proposition~\ref{t:orthogonal new}, which avoids an initial duality argument and instead exploits the underlying $L^4$ structure. In particular, a notable aspect of the proof is the use of the classical bilinear identity 

\begin{equation}\label{e:bilinear}
    \int_{\R} 
    |Ef(x) Eg(x)|^2
    \,\d x
    =
    (2\pi)^2
    \int_{\R^2} \frac{|\widehat{f}(\xi_1)|^2 |\widehat{g}(\xi_2)|^2}{|\phi'(\xi_1)-\phi'(\xi_2)|}
    \,\d \xi_1\d\xi_2.
\end{equation}
It is valid not only for the fractional Schr\"odinger propagators (i.e. $\phi(\xi)=|\xi|^a$), but more generally the extension operator $E$ for a certain smooth function $\phi: \R\to\R^2$ given by
\[
Ef(x_1,x_2)
=
\int_{\R} e^{i(x_1\xi +x_2\phi(\xi))} f(\xi)\,\d \xi.
\]
Note that the adjoint of $E$ may be viewed as the Fourier operator restricted to the curve $\{(\xi,\phi(\xi)): \xi\in\R\}$. 

The bilinear identity \eqref{e:bilinear} is the simplest example of a broader family of multilinear estimates that have emerged as powerful tools in the study of long-standing central problems in harmonic analysis, such as the Fourier restriction conjecture and Kakeya conjecture (\cite{TVV_JAMS,BCT_Acta}). Furthermore, the identity \eqref{e:bilinear} itself has attracted independent attention. It was generalized to higher dimensions by Bennett--Iliopoulou \cite{BennettIliopoulou}, and its interesting connection with X-ray transform is further explored in \cite{BNS_selecta}.

\subsection*{Notation}
To avoid ambiguity, throughout the paper, we use capital letters for functions defined on $\R^2$, and lowercase letters for functions defined on $\R$ or $\T$.
The Fourier transform of a function $f$ is denoted by $\widehat{f}$ or $\mathcal{F}[f]$, and $\mathcal{F}^{-1}$ denotes the inverse Fourier transform.
Regarding constants, for positive quantities $A$ and $B$, the notation $A\lesssim B$ and $A\gtrsim B$ means that $A\le CB$ and $A\ge CB$, respectively, for some positive constant $C$. We write $A\sim B$ if both $A\lesssim B$ and $A\gtrsim B$ hold. When it is important to indicate that the implicit constant depends on a parameter $a$, we use the notation $\lesssim_a$, $\gtrsim_a$, and $\sim_a$.

\subsection*{Overview}

Section~\ref{s:main theorem} is devoted to the proof of Theorem~\ref{t:main}. In Section~\ref{s:periodic}, we apply this result to obtain Strichartz estimates on the torus (Corollary~\ref{c:periodic_Strichartz}). Section~\ref{s:modulation} concerns local smoothing estimates in modulation spaces. It also includes a new orthogonal Strichartz-type estimate (Proposition~\ref{t:orthogonal new}) as well as further application to local well-posedness for the cubic NLS. We also note supplementary material in Appendix, including van der Corput's Lemma and an alternative proof of Theorem~\ref{t:main}.
\section{Proof of Theorem~\ref{t:main}}\label{s:main theorem}
\subsection{Lemma}

For the proof of Theorem~\ref{t:main}, the following van der Corput lemma in sublevel-set form plays a crucial role.

\begin{lem}
\label{lem:overlap}
Let $a\in(0,\infty)\setminus\{1\}$ and $r>0$. For $c\in \R$ and $\lambda>0$, define
\[
U_{a,r}(c,\lambda):=\{t\in[0,r]: t^a+(r-t)^a \in [c-\lambda,c+\lambda]\}.
\]
Then $U_{a,r}(c,\lambda)$ is a union of at most two intervals, and the following estimate holds:
\[
|U_{a,r}(c,\lambda)| \lesssim_a r \min\bigl(1,\lambda^{\frac12} r^{-\frac a2}\bigr).
\]
\end{lem}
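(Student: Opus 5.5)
Set $g(t):=t^a+(r-t)^a$ on $[0,r]$. The plan is to reduce by scaling to $r=1$, use convexity/concavity of $g$ to get the structure of $U_{a,r}(c,\lambda)$, and then apply a second-derivative sublevel-set bound of van der Corput type.

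\emph{Scaling.} Substituting $t=rs$ gives $g(rs)=r^a h(s)$ with $h(s):=s^a+(1-s)^a$. Hence
\[
U_{a,r}(c,\lambda)=r\,U_{a,1}(c r^{-a},\lambda r^{-a}),
\]
and it suffices to prove $|U_{a,1}(c,\mu)|\lesssim_a \min(1,\mu^{1/2})$ for every $c\in\R$ and $\mu>0$. The bound by $1$ is trivial, so only $\mu<1$ matters.

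\emph{Structure.} We have $h''(s)=a(a-1)\bigl(s^{a-2}+(1-s)^{a-2}\bigr)$, which has constant sign $\mathrm{sgn}(a-1)$ on $(0,1)$. Thus $h$ is strictly convex for $a>1$ and strictly concave for $a<1$. It is symmetric about $s=1/2$, monotone on each half, and has its extremum at $1/2$. The preimage of an interval under such a function is either a single interval (when it contains the extremal value) or two intervals placed symmetrically. This proves the first claim.

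\emph{Measure bound.} The key lower bound is $|h''(s)|\ge |a(a-1)|\cdot c_a$ uniformly on $(0,1)$. For $a\ge 2$ one term $s^{a-2}$ or $(1-s)^{a-2}$ is at least $2^{2-a}$, since $\max(s,1-s)\ge 1/2$. For $a<2$ with $a\neq1$, both terms are at least $1$ because $s^{a-2}\ge1$. Now apply the standard sublevel-set lemma (van der Corput in sublevel form, cf.\ the Appendix): if $|h''|\ge \kappa$ on an interval, then $|\{s:|h(s)-c|\le\mu\}|\lesssim (\mu/\kappa)^{1/2}$.

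If one prefers to avoid citing that lemma, it can be shown directly. On each monotone half, say $[1/2,1]$, write $h(s)-h(1/2)\ge \tfrac{\kappa}{2}(s-1/2)^2$ in the convex case, and use that $|h'(s)|\ge\kappa|s-1/2|$. A point of $U$ at distance $d$ from $1/2$ lies in a component of length $\lesssim \mu/(\kappa d)$. Combining this with the trivial bound $2d$ near the vertex gives length $\lesssim(\mu/\kappa)^{1/2}$ in both cases.

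Undoing the scaling yields $|U_{a,r}(c,\lambda)|\lesssim_a r\min(1,\lambda^{1/2}r^{-a/2})$.

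The only mildly delicate point is the uniform lower bound on $|h''|$ for $1<a<2$ and $0<a<1$, where the terms blow up at the endpoints. This is harmless, since blow-up only helps and $h''$ remains bounded below. One should also note that the endpoints $s=0,1$ are excluded from the $C^2$ argument, but they form a null set.
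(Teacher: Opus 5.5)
Your proposal is correct and follows essentially the paper's proof. Both arguments use the constant sign of the second derivative of $t^a+(r-t)^a$ to get the two-interval structure, a uniform lower bound on that second derivative, and the van der Corput sublevel-set lemma combined with the trivial bound by $r$. The only difference is cosmetic: you rescale to $r=1$, whereas the paper gets $|u''|\gtrsim_a r^{a-2}$ directly from $A^p+B^p\ge\min(1,2^{-p})(A+B)^p$.
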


\begin{proof}
Define $u:[0,r]\to\R$ by
\[
u(t):=t^a+(r-t)^a-c.
\]
Then
\[
U_{a,r}(c,\lambda)=\{t\in[0,r]: |u(t)|\le \lambda\},
\]
and the second derivative of $u$ is given by
\[
u''(t)=a(a-1)\bigl(t^{a-2}+(r-t)^{a-2}\bigr).
\]
Since $u''$ has constant sign on $(0,r)$, the function $u$ is either convex or concave, and therefore $U_{a,r}(c,\lambda)$ is a sublevel set of a convex/concave function. Consequently, it is a union of at most two intervals.

Finally, using the inequality
\[
A^p+B^p \ge \min(1,2^{-p})(A+B)^p,
\]
which holds for all $A,B>0$ and $p\in\R$, we obtain
\[
|u''(t)|=a|a-1|\bigl(t^{a-2}+(r-t)^{a-2}\bigr)
\gtrsim_a r^{a-2},
\]
uniformly for $t\in(0,r)$. The desired estimate then follows from van der Corput’s lemma in sublevel-set form (Lemma~\ref{l:vandercoroput sublevel}) and the fact that $U_{a,r}(c,\lambda)\subset [0,r]$.
\end{proof}
    
\subsection{Proof of Theorem~\ref{t:main}}

\begin{proof}
    Using the decomposition 
    \[F=\bigl(\sum_{\omega\in (-\infty,- 2\delta^{1/b}]} +\sum_{\omega \in (-2\delta^{1/b},2\delta^{1/b})} + \sum_{\omega\in [ 2\delta^{1/b},\infty)}\bigr) F_\omega\] and the triangle inequality in $L^4$, together with the fact that the square function increases when the number of components increases, we see that it suffices to prove the estimate for functions $F$ satisfying 
    \[\operatorname{supp}\widehat{F}\subset \Gamma_a(\delta)\cap ((0,\infty)\times \R).\]

    Since $\operatorname{supp}\widehat{F_\omega}\subset \theta_\omega$, it follows that 
    \[\operatorname{supp}\bigl(\widehat{F_{\omega}}*\widehat{F_{\omega'}}\bigr)\subset \theta_{\omega}+\theta_{\omega'}.\]
    We then define $N(\delta)$ to be the maximal overlap of the Minkowski sums $\theta_{\omega} + \theta_{\omega'}$, i.e. 
    \[N(\delta):= \Bigl\|\sum_{\omega,\omega'\in \Omega_b(\delta)} 1_{\theta_{\omega}+\theta_{\omega'}}\Bigr\|_{L^{\infty}(\R^2)}.\]
    By Plancherel's theorem, and the Cauchy--Schwarz inequality, we obtain
    \begin{align*}
        \|F\|_{L^4(\R^2)}^4
        &=
        \int_{\R^2}\Bigl|\sum_{\omega\in \Omega_b(\delta)}F_{\omega}\Bigr|^{4}dx 
        =  \int_{\R^2}\Bigl|\sum_{\omega,\omega'\in \Omega_b(\delta)}F_{\omega}F_{\omega'}\Bigr|^2dx
        =
        \int_{\R^2}\Bigl|{\sum_{\omega,\omega'\in \Omega_b(\delta)}\widehat{F_{\omega}}}*\widehat{F_{\omega'}}\Bigr|^2d\xi\\
        & \leq 
        \int_{\R^2} N(\delta) \sum_{\omega,\omega'\in \Omega_b(\delta)} \Bigl|\widehat{F_{\omega}}*\widehat{F_{\omega'}} \Bigr|^2 d\xi
        =
        N(\delta) \sum_{\omega,\omega'\in \Omega_b(\delta)} \int_{\R^2}|F_{\omega} F_{\omega'}|^2 dx\\
        &=
        N(\delta) \Bigl\|\Bigl(\sum_{\omega \in \Omega_b(\delta)} |F_\omega|^2 \Bigr)^{\frac{1}{2}} \Bigr\|_{L^4(\R^2)}^4.
    \end{align*}
    Thus, it suffices to show 
    \[
        N(\delta)
        \lesssim
        \max\left\{
        1,\,
        \delta^{-(\frac1b-\frac 1a)}, \, 
        \delta^{-(\frac1b-\frac 12)}
        \right\}.
    \]
    Suppose that $\xi\in \R^2$ belongs to a Minkowski sum $ \theta_{\omega}+\theta_{\omega'}$ for some $\omega,\omega'\in \Omega_b(\delta)$. Then $\xi_1\in [0,2]$ and there exist $\eta,\eta'\in [0,1]$ such that $|\eta-\omega|\leq \delta^{1/b}$, $|\eta'-\omega'|\leq \delta^{1/b}$ and $\tau, \tau'\in [-\delta,\delta]$ such that
    \[\begin{cases}
            \xi_1=\eta+\eta'\\
            \xi_2=\eta^a+\tau+\eta'^a+\tau'
        \end{cases}.\]
    Define
    \[L(\xi):=\bigl\{(\zeta_1,\zeta_2)\ge 0: \zeta_1+\zeta_2=\xi_1,\; \zeta_1^a+\zeta_2^a\in \xi_2 +[-2\delta,2\delta]\bigr\}\]
    and observe that since $(\eta,\eta')\in L(\xi)$, the point $(\omega,\omega')\in \Omega(\delta)^2$ lies in a $2\delta^{1/b}$-neighborhood of $L(\xi)$.
    Because $\Omega(\delta)^2$ is $\delta^{1/b}$-separated,the disjoint balls of radius $\delta^{1/b}/2$ centered at points of $\Omega_b(\delta)^2\cap(L(\xi)+B(0,2\delta^{1/b}))$ are contained in 
    \[S(\xi) = L(\xi)+B(0,3\delta^{1/b}),\] 
    A volume bound therefore yields
    \[N(\delta)\leq \sup_{\xi:\, \xi_1\in[0,2]} \#\bigl((L(\xi)+B(0,2\delta^{1/b})) \cap \Omega_b(\delta)^2\bigr)\lesssim \sup_{\xi:\, \xi_1\in [0,2]} \delta^{-2/b}|S(\xi)|. \]

    The set $L(\xi)$ is a subset of a line in $\R^2$ and by Lemma~\ref{lem:overlap} with $r=\xi_1$, $\lambda =2\delta$, it is a union of at most two line segments with total length (i.e. the 1-dimensional Hausdorff measure)
    \[\mathcal{H}^1(L(\xi))\lesssim \xi_1\min(1,\delta^{1/2}\xi_1^{-a/2}) 
    =\begin{cases}
        \xi_1, &\quad \xi_1\in [0,\delta^{1/a}]\\
        \xi_1^{1-a/2}\delta^{1/2}, &\quad \xi_1\in [\delta^{1/a},2]
    \end{cases}.\]
    Therefore, 
    \[\sup_{\xi:\, \xi_1\in [0,2]} \mathcal{H}^1(L(\xi))\lesssim_a 
    \begin{cases}
    \delta^{1/a},\quad &a\in[2,\infty)\\
    \delta^{1/2},\quad &a\in (0,2)\setminus\{1\}
    \end{cases}. \]

    Since $L(\xi)$ is a union of at most two line segments, the set $S(\xi)$ is contained in a union of at most two rectangles of width $O(\delta^{1/b})$ and length $\mathcal{H}^1(L(\xi))+O(\delta^{1/b})$. Hence,
    \[|S(\xi)|\lesssim\bigl(\mathcal{H}^1(L(\xi))+O(\delta^{1/b})\bigr)O(\delta^{1/b})\lesssim_a \delta^{2/b}\max \Bigl\{1, \delta^{\frac{1}{a}-\frac{1}{b}}, \delta^{\frac{1}{2}-\frac{1}{b}}\Bigr\}.\]
    Multiplying by $\delta^{-2/b}$ yields the desired bound for $N(\delta)$ and completes the proof.
\end{proof}

\subsection{Sharpness of constants in Theorem \ref{t:main}}

\begin{proof}
    \underline{Case $a\in (0,2]$, $a\neq 1$.}
    
    We need to prove that the best constant in the reverse square function estimate \ref{e:main} satisfies $R_{a,b}(\delta)\gtrsim \max(\delta^{1/2-1/b},1)$.
    Throughout the proof, we assume that $\delta< 1/10$.

    Suppose that $b\in (1,2]$ first. 

    Let $c>0$ be a small number to be chosen later. Define
    \[\Omega_b(\delta):=\delta^{1/b}\Z \cap [1/2-c\delta^{1/2},1/2+c\delta^{1/2}].\]
    and observe that cardinality of this set is  $\#\Omega_b(\delta)\sim_c \delta^{1/2-1/b}$.

    Let $\phi \in \mathcal{S}(\R)$ such that $1_{[-1/4,1/4]}\leq \phi\leq 1_{[-1/2,1/2]}$ and let  $\phi_\delta := \delta^{-1}\phi(\delta^{-1}\cdot)$ denote its $L^1$-normalized rescaling. Define
    \[\widehat{F}(\xi) := \sum_{\omega\in \Omega_b(\delta)}\phi_\delta(\xi_1 - \omega)\phi_\delta(\xi_2-\omega^a).\]

    Observe that we can index the set $\Theta=\{\theta\}$ with the set $\Omega_b$ so that $\theta_\omega \ni (\omega,\omega^a)$.

    Since $b > 1$, we have $\delta < \delta^{1/b}$. Consequently, the $\delta/2\times \delta/2$ support of the bump function centered at $(\omega, \omega^a)$ is contained strictly within $\theta_\omega$, and these supports are mutually disjoint for distinct $\omega$. Thus, we have
    \[\widehat{F}_{\omega}(\xi) = \phi_\delta(\xi_1 - \omega)\phi_\delta(\xi_2-\omega^a).\]
    Let $Q_\delta := [-\delta, \delta]^2$. Note that $\widehat{F}_{\omega}$ is essentially a smoothed indicator function of the rectangle $(\omega, \omega^a) + Q_{\delta/2}$.
    Let
    \[T(\delta):=\bigl(\Omega_b(\delta)+\Omega_b(\delta)\bigr)\cap [1-c\delta^{1/2}/10,1+c\delta^{1/2}/10]\]
    The cardinality is $\# T(\delta)\sim_c \delta^{1/2-1/b}$. Due to the arithmetic progression structure of $\Omega_b(\delta)$, for each fixed $\zeta_1 \in T(\delta)$, there are $\sim_c \delta^{1/2-1/b}$ pairs $(\omega, \omega') \in \Omega_b(\delta)^2$ such that $\omega + \omega' = \zeta_1$.
    For such triple $\zeta_1,\omega,\omega'$, applying Taylor's theorem around $\zeta_1/2$ yields:
    \begin{align*}
    \omega^a + (\omega')^a &= \omega^a + (\zeta_1 - \omega)^a
    = 2^{1-a}\zeta_1^a + O_a\left((\omega - \zeta_1/2)^2\right)\\
    &=2^{1-a}\zeta_1^a+O_a(c^2\delta),
    \end{align*}
    with the implicit constant independent of $\zeta_1$, $c$ and $\delta$. By choosing $c>0$ small enough, the following estimate holds
    \[(\omega,\omega^a)+(\omega',\omega'^a) \in (\zeta_1,2^{1-a}\zeta_1^{a})+\{0\}\times [-\delta/10,\delta/10].\]
    Using the properties of convolution of indicator functions, together with the previous inequality, we get the following inequality
    \begin{align*}
        \widehat{F}_{\omega}*\widehat{F}_{\omega'}(\xi) &\gtrsim  |Q_\delta|^{-2}\bigl( 1_{(\omega,\omega^a)+Q_{\delta/4}}*1_{(\omega',\omega'^a)+Q_{\delta/4}}\bigr)(\xi)
        \gtrsim |Q_\delta|^{-1}1_{(\zeta_1,2^{1-a}\zeta_1^a)+Q_{\delta/8}}(\xi).
    \end{align*}
    Summing the previous inequality over all $\sim \delta^{1/2-1/b}$ triples $(\xi_1,\omega,\omega')\in T(\delta)\times \Omega_b(\delta)^2$ for which $\omega+\omega'=\zeta_1$ holds, we conclude the following inequality holds
    \begin{align*}
        \int_{\R^2} \Bigl|\sum_{\omega_1,\omega_2} \widehat{F}_{\omega_1}*\widehat{F}_{\omega_2}(\xi) \Bigr|^2 d\xi 
        &\gtrsim \int_{\R^2} \Bigl| \sum_{\zeta_1\in T(\delta)} \delta^{1/2-1/b}|Q_\delta|^{-1}1_{(\zeta_1,2^{1-a}\zeta_1^a)+Q_{\delta/8}}(\xi)\Bigr|^2 d\xi\\
        &= ( \delta^{1/2-1/b}|Q_{\delta}|^{-1})^2 \int_{\R^2}\sum_{\zeta_1\in T(\delta)} 1_{(\zeta_1,2^{1-a}\zeta_1^a)+Q_{\delta/8}}(\xi) d\xi\\
        &\sim \delta^{3(1/2-1/b)} |Q_{\delta}|^{-1}.
    \end{align*}
    On the other hand,
    \begin{align*}
        \int_{\R^2} \sum_{\omega_1,\omega_2} |\widehat{F}_{\omega_1}*\widehat{F}_{\omega_2}(\xi)|^2 d\xi\lesssim \int_{\R^2} \sum_{\omega_1,\omega_2\in \Omega_b}|Q_\delta|^{-2}1_{(\omega_1+\omega_2, \omega_1^a+\omega_2^a)+Q_\delta}(\xi) d\xi\sim \delta^{2(1/2-1/b)}|Q_\delta|^{-1}.
    \end{align*}
    Comparing the expressions, we conclude that  $R_{a,b}(\delta)\gtrsim\delta^{1/2-1/b}$.

    Consider now the case $b<1$. In this regime, we define:
    \[
    \widehat{F}(\xi) = \sum_{\omega \in \Omega_b} \phi_{\delta^{1/b}}(\xi_1 - \omega)\phi_{\delta}(\xi_2 - \omega^a).
    \]
    Since $b < 1$, we have $\delta^{1/b} < \delta$. The frequency support around $(\omega, \omega^a)$ is now a rectangle of dimensions $\delta^{1/b} \times \delta$. This rectangle is still a subset of the sector $\theta_\omega$, and mutual disjointness is preserved. The calculation proceeds identically to Case 1, replacing the square $Q_\delta$ with the rectangle $\Tilde{Q}_\delta = [-\delta^{1/b}, \delta^{1/b}] \times [-\delta, \delta]$. The volume factors cancel out, giving the same lower bound.

    Finally, when $b\ge 2$, we simply select a single wave packet:
    \[
    \widehat{F}(\xi) = \phi_\delta(\xi_1)\phi_\delta(\xi_2).
    \]
    The sum over $\theta$ trivializes to a single term. The left-hand side and right-hand side of the estimate become identical, yielding $R_{a,b}(\delta)\gtrsim 1$.

    \underline{Case $a > 2$.}
    We need to prove that the best constant in the reverse square function estimate \ref{e:main} satisfies $R_{a,b}(\delta)\gtrsim \max(\delta^{1/a-1/b},1)$.

    If $a<b$, the estimate follows by choosing $f$ to be equal to a single wave packet
    \[\widehat{F}(\xi)=\phi_{\delta^{\max(1,1/b)}}(\xi_1)\phi_\delta(\xi_2).\]

    When $a\ge b$, let $c>0$ be a small number to be chosen later and define
    \[\Omega_b(\delta):= \delta^{1/b}\Z \cap [0,c\delta^{1/a}]\]
    and observe that cardinality of this set is $\#\Omega_b(\delta)\sim \delta^{1/a-1/b}$.
    Similarly to the first part, when $b\ge 1$, we define 
    \[\widehat{F}(\xi)=\sum_{\omega\in \Omega_b(\delta)}\phi_\delta(\xi_1-\omega)\phi_\delta(\xi_2-\omega^a).\]
    Then, if we define
    \[T(\delta):= \bigl(\Omega_b(\delta)+\Omega_b(\delta)\bigr)\cap [c\delta^{1/a}/2,3c\delta^{1/a}2],\]
    the cardinality is $\#T(\delta)\sim \delta^{1/a-1/b}$ and for each $\zeta_1\in T(\delta)$ there exist $\sim \delta^{1/a-1/b}$ pairs $(\omega,\omega')\in \Omega_b(\delta)^2$ such that $\omega+\omega'=\zeta_1$. For each such triplet, by the trivial estimate, we have
    \[|\omega^a+\omega'^a|\leq |\omega^a| + |\omega|^a\leq 2c^a\delta.\]
    When $c>0$ is small enough, we can see that one can repeat the same proof as in the first part, because there is a significant overlap on squares $(\zeta_1,0)+Q_{\delta/8}$, for $\zeta_1\in T(\delta)$.

    When $b<1$, we modify the proof in the same way as in the first case.
\end{proof}

\section{Proof of Periodic Strichartz estimate}\label{s:periodic}
\subsection{Sufficiency}
\begin{proof}[Proof of Corollary \ref{c:periodic_Strichartz}]
Suppose first that $f$ is a periodic function such that $\operatorname{supp}\widehat{f}\subset [-N,N]$.

Let $\eta\in \mathcal{S}(\R)$ be a Schwartz function such that $\eta|_{[-1/2,1/2]}\gtrsim 1$ $\operatorname{supp} \widehat{\eta}\subset [-1/2,1/2]$. Define $\eta_t(x)=t^{-1}\eta(t^{-1}x)$.
Strichartz estimates for quasi-periodic functions and
              applications
Using the change of variables $x\mapsto x/N$ and $t\mapsto t/N^a$, we obtain
\begin{align*}
    \bigl\|e^{2\pi it(-\partial_x^2)^{\frac a2}}f\bigr\|_{L^4_{t,x}([-1,1]\times \T)}^4 
    &=N^{-(a+1)}\int_{-N/2}^{N/2}\int_{-(N/2)^a}^{(N/2)^a}\Bigl|\sum_{k\in \Z, |k|\leq N} \widehat{f}(k)e^{2\pi i (|\frac{k}{N}|^at + \frac{k}{N}x)}\Bigr|^4dtdx=(*).
\end{align*}
When $a<1$, using the properties of the function $\eta$, we obtain the estimate
 \[(*) \lesssim N^{-(a+1)} \int_{\R^2}\Bigl|\sum_{k\in \Z, |k|\leq N} \widehat{f}(k)e^{2\pi i (|\frac{k}{N}|^at + \frac{k}{N}x)}(\eta_{N}\otimes \eta_{N^a})(x,t)\Bigr|^4dxdt.\]
When $a>1$, we first observe that for an $N$-periodic function $g:\R\to\C$ the estimate
    \[N^{-1}\int_{-N/2}^{N/2} |g| \lesssim N^{-a}\int_{-(N/2)^a}^{,(N/2)^a} |g|\]
    holds, so using the properties of the function $\eta$ we obtain
    \begin{align*}
        (*)
        &\lesssim N^{-2a} \int_{\R^2}\Bigl|\sum_{k\in \Z, |k|\leq N} \widehat{f}(k)e^{2\pi i (|\frac{k}{N}|^at + \frac{k}{N}x)}(\eta_{N^a}\otimes \eta_{N^a})(x,t)\Bigr|^4dxdt.
    \end{align*}
    Define
    \[F_{a,k}(x,t):=N^{-\max(1,a)-a} \widehat{f}(k)e^{2\pi i (|\frac{k}{N}|^at + \frac{k}{N}x)}(\eta_{N^{\max(1,a)}}\otimes \eta_{N^a})(x,t).\]
    Using the frequency localization property of the function $\eta$,
    \[\operatorname{supp} \widehat{F_{a,k}} \subset ( kN^{-1},|kN^{-1} |^a )+\frac{1}{2}\Bigl([-N^{-\max(1,a)},N^{-\max(1,a)}]\times [-N^{-1},N^{-1}]\Bigr)\subset \theta_k,\]
    where $\theta_k$ is the $k$-th $N^{-1}\times N^{-a}$ neighborhood in Theorem \ref{t:main} \footnote{This is why we used periodicity to get better frequency localization when $a>2$ because otherwise the $N^{-1}\times N^{-a}$ rectangle would not be a subset of $\theta_k$.}
    
    Applying Theorem \ref{t:main}, with $b=a$ and $\delta = N^{-a}$, we obtain
\[
    \Bigl\|\sum_{k}F_{a,k}\Bigr\|_{L^4_{t,x}}
    \lesssim  N^{\max\{0,\frac{1}{4}(1-\frac{a}{2})\}}\Bigl\| \Bigl(\sum_{k} |F_{a,k}|^2 \Bigr)^{\frac{1}{2}} \Bigr\|_{L^4_{t,x}}.
\]
Finally, observing that
\[\Bigl\| \Bigl(\sum_{k} |F_{a,k}|^2 \Bigr)^{\frac{1}{2}} \Bigr\|_{L^4_{x,t}}\\ 
     = N^{-\frac{\max(1,a)+a}{4}} \Bigl\| \Bigl( \sum_{k} |\widehat{f}(k)|^2 \Bigr)^{\frac{1}{2}} \eta_{N^{\max(1,a)}}\otimes \eta_{N^a}\Bigr\|_{L^4_{t,x}} 
    \lesssim \|f\|_{L^2(\T)},\]
the claimed estimate holds for frequency localized $f$.

For a general function $f$, we use the Littlewood--Paley theory. Denote with $\Delta_j$ the frequency projection to $\sim 2^j$ and with $\Delta_{\leq 0}$ projection to $\lesssim 1$. Then \[\Delta_j(e^{it(-\partial_x^2)^{a/2}}f)= e^{it(-\partial_x^2)^{a/2}} (\Delta_jf).\] Therefore, using Littlewood--Paley and Minkowski inequality, together with the first part of the proof, we obtain

\begin{align*}
    \|e^{2\pi it(-\partial_x^2)^{\frac a2}}f\|_{L^4_{t,x}([-1,1]\times \T)} 
    &\sim \Bigl\| \Bigl( |e^{2\pi it(-\partial_x^2)^{\frac a2}}(\Delta_{\leq 0}f)|^2+\sum_{j\ge 0}|e^{2\pi it(-\partial_x^2)^{\frac a2}}(\Delta_jf)|^{2}\Bigr)^{\frac{1}{2}} \Bigr\|_{L^4_{t,x}}\\
    &\lesssim \Bigl( \bigl\|e^{2\pi it(-\partial_x^2)^{\frac a2}}(\Delta_{\leq 0}f)\bigr\|_{L^4_{t,x}}^2 + \sum_{j\ge 0} \bigl\|e^{2\pi it(-\partial_x^2)^{\frac a2}}(\Delta_jf)\bigr\|_{L^4_{t,x}}^2 \Bigr)^{\frac{1}{2}}\\
    &\lesssim\Bigl( \|(\Delta_{\leq 0}f)\|_{L^2_x}^2 + \sum_{j\ge 0} 2^{j\max\{0,\frac{1}{4}(1-\frac{a}{2})\}}\|\Delta_jf\|_{L^2_x}^2 \Bigr)^{\frac{1}{2}}\\
    &\sim \begin{cases}
        \|f\|_{L^2_x(\T)},\quad &a\ge 2\\
        \|f\|_{H^s_x(\T)},\quad &a\in (0,1)\cup (1,2), \; s=\frac{1}{4}(1-\frac{a}{2}).
    \end{cases}
\end{align*}
\end{proof}

\subsection{Necessity}
Observe that the only inequality used in the proof of Corollary \ref{c:periodic_Strichartz} is the reverse square function estimate \ref{e:main} in the case \(a=b\). Therefore, if we choose the Schwartz function \(\eta \in \mathcal{S}(\R)\) in that proof so that
$1_{[-1/4,\,1/4]} \le \widehat{\eta} \le 1_{[-1/2,\,1/2]}$,
then all steps in the preceding argument, except for the application of the reverse square function estimate, become equalities up to multiplicative constants. Moreover, with this choice of \(\eta\), the function $f$ coincides with the one used in the proof of sharpness of the reverse square function estimate. Consequently, the inequality arising from that step is also an equality up to a multiplicative constant, so the constants in Corollary \ref{c:periodic_Strichartz} are sharp.

\section{Application 2}\label{s:modulation}
In this subsection, we present the proof of Corollary \ref{c:local_smoothing_modulation} and Corollary \ref{c:local_smoothing_modulation_lu}.
In both proofs, we use the following reverse square function estimate for extension operators. 
\begin{prop}
    Let $R \ge 1$ and $a\geq 2$. For $f\in \mathcal{S}(\R)$, the estimate
    \[
    \bigl\|e^{{it(-\partial^2_x)}^{\frac a2}}f(x)\bigr\|_{L^{4}_{t,x}(B_{R^a})} \lesssim \Bigl\|\Bigl(\sum_{k\in\Z}\bigl|e^{{it(-\partial^2_{x})}^{\frac a2}}\square_k^{\sharp}f(x)\bigr|^2\Bigr)^{1/2}\Bigr\|_{L^{4}_{t,x}(w_{B_{R^a}})}
    \]
    holds where $\widehat{\square_k^{\sharp}h}(\xi) := 1_{
    [R(k-1/2), R(k +1/2)]}(\xi)\widehat{h}(\xi)$, $B_{R^a}$ denotes arbitrary two dimensional balls with radius $R^a$, and $w_{B_{R^a}}$ denotes a weight function that rapidly decays off outside $B_{R^a}$. \label{p:RSF_extension}
\end{prop}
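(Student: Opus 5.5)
Our plan is to reduce the statement to Theorem~\ref{t:main} with $b=a$ by parabolic-type rescaling plus a localization to $B_{R^a}$. First we dispose of the localization. We fix a Schwartz function $\psi$ on $\R^2$ with $|\psi|\gtrsim 1$ on $B(0,1)$ and $\widehat\psi$ supported in $B(0,1)$, and set $\psi_{B}(x,t)=\psi((x,t)-c_B)/R^a)$. Then
\[
\|e^{it(-\partial_x^2)^{a/2}}f\|_{L^4(B_{R^a})}\lesssim \|\psi_B\, e^{it(-\partial_x^2)^{a/2}}f\|_{L^4(\R^2)}.
\]
The function $G:=\psi_B\,e^{it(-\partial_x^2)^{a/2}}f$ has space-time Fourier support in the $R^{-a}$-neighbourhood of the curve $\{(\xi,|\xi|^a)\}$. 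The identity
\[
G=\sum_k \psi_B\, e^{it(-\partial_x^2)^{a/2}}\square_k^\sharp f
\]
shows that $G_k:=\psi_B e^{it(\cdot)}\square_k^\sharp f$ is Fourier supported in the $R^{-a}$-neighbourhood of the arc over $[R(k-\tfrac12),R(k+\tfrac12)]$, up to an $R^{-a}$ fattening in $\xi_1$. We then take $w_{B_{R^a}}:=|\psi_B|^4$ (or any rapidly decaying majorant), so that $\|(\sum_k|G_k|^2)^{1/2}\|_{L^4}$ equals the right-hand side.

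Next we rescale so that the frequency supports sit in $\Gamma_a(\delta)$. The caps have $\xi_1$-length $R$, which we normalize to $\delta^{1/a}$ with $\delta$ small; this is the scale $b=a$. A global frequency range $|\xi|\le M$ is then mapped into $[-1,1]$. Concretely, by the reduction to bounded frequencies (Schwartz $f$ and monotone convergence) we may assume $\operatorname{supp}\widehat f\subset[-M,M]$ with $M\gg R$. We substitute $\xi_1=M\eta_1$ and $\xi_2=M^a\eta_2$. The curve becomes $\Gamma_a$, the neighbourhood becomes $R^{-a}M^{-a}$, so $\delta=(RM)^{-a}$. The caps have $\eta_1$-width $R/M$, whereas $\delta^{1/a}=(RM)^{-1}$. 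These agree only if $R^2=1$, so a direct match fails for $R>1$.

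We therefore first decompose further: each $\square_k^\sharp$ cap is split into unit-length subintervals $\square_j$. Since $|\xi|^a$ has second derivative $\gtrsim 1$ away from the origin when $a\ge2$, a unit interval of the curve fits in an $O(1)\times O(1)$ box. Theorem~\ref{t:main} with $\delta=M^{-a}$ and $b=a$, applied after the scaling $\xi\mapsto \xi/M$, has $\delta^{1/b}=M^{-1}$, i.e. unit width in the original variables. Since $\rho(a,a)=0$ for $a\ge2$, this gives the reverse square function estimate with unit caps and no loss. This is legitimate because $R^{-a}\le 1$, so $\operatorname{supp}\widehat G$ lies inside the $1$-neighbourhood, i.e. in $\Gamma_a(M^{-a})$ after scaling, which is all the theorem requires.

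It remains to pass from unit caps to the coarser caps $\square_k^\sharp$. For this we use the elementary fact that a reverse square function estimate for a finer partition implies one for any coarser partition whose pieces are unions of the finer ones. Precisely, we apply the fine estimate to $G$ and then bound $\sum_{j}|G_j|^2$ by grouping. The grouping is the step we expect to be the main obstacle, since $|\sum_{j\subset k}G_j|^2\ne\sum_{j\subset k}|G_j|^2$ pointwise. We would try to avoid it by choosing $\delta$ and $b$ so that Theorem~\ref{t:main} applies directly at width $R$. After the scaling we need $\eta_1$-width $R/M=\delta^{1/b}$ while the neighbourhood is still $\le\delta$. We take $\delta=M^{-a}$ (the coarser neighbourhood, which still contains the support since $R^{-a}\le1$) and $b$ defined by $\delta^{1/b}=R/M$, i.e. $1/b=1-\log R/\log M \le 1$... $\cdot a^{-1}$ scaled accordingly. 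This gives $1/b\le 1/a\le1/2$, hence $b\ge a\ge2$ and $\rho(a,b)=0$.

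So the final argument is a direct application of Theorem~\ref{t:main} with $\delta=M^{-a}$, $\Omega_b(\delta)=\{Rk/M\}$, and $b\ge a$, giving a constant independent of $R$ and $M$. The remaining verifications are the separation condition, handled by splitting $k$ into finitely many parity classes so that the points are $\ge2\delta^{1/b}$-separated, and undoing the scaling. Both sides scale by the same Jacobian factor, so the estimate transfers back unchanged.
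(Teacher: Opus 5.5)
Your argument has a genuine gap, and the estimate you are aiming for is in fact false.

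\textbf{The step that fails.} You claim that, after the scaling $\xi_1=M\eta_1$, $\xi_2=M^a\eta_2$, one has $\operatorname{supp}\widehat G\subset\Gamma_a(M^{-a})$. Both your unit-cap argument and your final choice $\delta=M^{-a}$, $b\ge a$ rest on this, and the same slip produces your value $\delta=(RM)^{-a}$. The set $\operatorname{supp}\widehat G$ is the \emph{isotropic} $R^{-a}$-neighbourhood of the curve, so it contains points $(\xi+z_1,|\xi|^a+z_2)$ with $|z_1|\sim R^{-a}$. You note this fattening in $\xi_1$ but then drop it. However, $\Gamma_a(\delta)$ is a \emph{vertical} neighbourhood, and relative to the graph a horizontal offset $z_1$ is a vertical offset $\approx a|\xi|^{a-1}|z_1|$. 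So at $|\xi_1|\sim M$ the vertical thickness is $\sim R^{-a}M^{a-1}$, which after scaling is $\delta\sim R^{-a}M^{-1}$. This always exceeds $(RM)^{-a}$, and it exceeds $M^{-a}$ once $M^{a-1}\gg R^a$. The justification ``$R^{-a}\le 1$, so it lies in the $1$-neighbourhood'' conflates the two kinds of neighbourhood. For any admissible $\delta\gtrsim R^{-a}M^{-1}$, caps of $\eta_1$-width $R/M$ give $\rho(a,b)=0$ only if $R/M\gtrsim\delta^{1/a}$, i.e.\ $M^{a-1}\lesssim R^{2a}$. So Theorem~\ref{t:main} gives no bound uniform in $M$.

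\textbf{No argument can give one.} With caps of width $R$ and no restriction on $\operatorname{supp}\widehat f$, the estimate is false. Take $\widehat f=\sum_{k=1}^N\chi(\cdot-M-kR)$, with $\chi$ a bump supported in $[-R/4,R/4]$, $M\in R\Z$ large, and $N=cM^{1/2}R^{-1-a/2}$. The solution spends time $|t|\lesssim R^aM^{1-a}$ in $B_{R^a}$ (centred at the origin). During that time the dispersion $|t|M^{a-2}(NR)^2\lesssim c^2$ is negligible, so the solution is essentially a translate of $f$ moving at speed $aM^{a-1}$. One then computes that the left-hand side to the fourth power is $\gtrsim N$ times the right-hand side to the fourth power. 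This is the usual loss for $N$ essentially collinear caps, and it is unbounded as $M\to\infty$.

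\textbf{What the statement should say.} The displayed cap width is a misprint. The paper's proof, and its use in the proof of Corollary~\ref{c:local_smoothing_modulation}, concern $\operatorname{supp}\widehat f\subset[-1,1]$ and caps $[R^{-1}(k-\frac12),R^{-1}(k+\frac12)]$. In that corollary $\operatorname{supp}\widehat g\subset[-1,1]$ and the caps must rescale to the unit $\square_k$. Your observation that the widths match ``only if $R^2=1$'' is exactly this symptom.

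In that setting no rescaling is needed. The slope of $|\xi|^a$ is bounded on $[-1,1]$, so $\operatorname{supp}\widehat G\subset\Gamma_a(CR^{-a})$ up to a harmless dilation. Theorem~\ref{t:main} with $\delta\sim R^{-a}$ and $b=a$ then applies directly: $\delta^{1/b}\sim R^{-1}$, and $\rho(a,a)=0$ since $a\ge2$.

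You would still owe one step you skipped. Theorem~\ref{t:main} bounds $\|G\|_{L^4}$ by the square function of the sharp restrictions $\square_k^\sharp G$, not of $G_k=\psi_B\,e^{it(-\partial_x^2)^{a/2}}\square_k^\sharp f$. The paper bridges this in two moves. First, $\square_k^\sharp G$ involves only the $O(1)$ pieces $G_i$ with $i$ near $k$, because multiplication by $\psi_B$ moves frequencies by $\lesssim R^{-a}\le R^{-1}$. Second, it uses the vector-valued boundedness of the Hilbert transform.
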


\begin{proof}
    Let $\eta_{B_{R^a}}\in \mathcal{S}(\R^{2})$ satisfy
    \begin{align*}
        \begin{cases}
            \supp\mathcal{F}[\eta^{\frac{1}{4}}_{B_{R^a}}] \subset B_{2}(0,R^{-a})\\
            \eta_{B_{R^a}} \gtrsim 1 \thinspace \mathrm{on}\thinspace B_{2}(0,R^{a})
        \end{cases}
    \end{align*}
    where $B_{2}(0,K)$ denotes a two dimensional ball centered at the origin and with radius $K>0$. 
    Note that $\supp \F_{t,x}[e^{{it(-\partial^2_x)}^{\frac a2}} f\cdot \eta_{B_{R^a}}^{\frac{1}{4}}]$ is contained in a $R^{-a}$-neighborhood of the curve $(\xi, |\xi|^a)$. Thus, we can apply Theorem \ref{t:main} as follows. 
    \begin{align*}
    \bigl\|e^{{it(-\partial^2_x)}^{\frac a2}}f(x)\bigr\|_{L^{4}_{t,x}(B_{R^a})} &\lesssim \bigl\|e^{{it(-\partial^2_x)}^{\frac a2}}f(x)\eta^{\frac{1}{4}}_{B_{R^a}}\bigr\|_{L^{4}_{t,x}(\R^2)}\\
    &\lesssim \Bigl\|\Bigl(\sum_{k\in\Z}\bigl|\square_k^{\sharp}(e^{{it(-\partial^2_{x})}^{\frac a2}}f(x)\eta^{\frac{1}{4}}_{B_{R^a}})\bigl|^2\Bigr)^{1/2}\Bigr\|_{L^{4}_{t,x}(\R^2)}. 
    \end{align*}
   For each $k$, let $S_k$ be a set of indices that satisfy $\square_k^{\sharp}\Bigl(\sum_{i\in S_{k} }e^{{it(-\partial^2_{x})}^{\frac a2}}\square_i^{\sharp}f(x)\eta^{\frac{1}{4}}_{B_{R^a}}\Bigr)\neq 0$. Note that the number of components of $S_k$ is independent of $R$ because the multiplication by $\eta_{B_{R^a}}$ enlarges the Fourier support only by $R^{-a}$. Hence, we have
    \[
    \square_k^{\sharp}(e^{{it(-\partial^2_{x})}^{\frac a2}}f(x)\eta^{\frac{1}{4}}_{B_{R^a}}) = \square_k^{\sharp}\Bigl(\sum_{i \in S_{k}}e^{{it(-\partial^2_{x})}^{\frac a2}}\square_i^{\sharp}f(x)\eta^{\frac{1}{4}}_{B_{R^a}}\Bigr). 
    \]
     Then by using the boundedness of the Hilbert transform and the inequality $\eta_{B_{R^a}} \lesssim w_{B_{R^a}}$, we obtain the desired result. 
\end{proof}

We also collect some useful lemmas here.
\begin{prop}[{\cite[Theorem 1]{chaichenets2020local}}]
    Let $d\in\mathbb{N}$, $p,q\in [1,\infty]$, $s\in\R$. Then \begin{equation*}
        \|f\|_{M^{s}_{p,q}(\R^d)} \sim \bigl\|2^{sj}\|\Delta_{j}f\|_{M_{p,q}(\R^d)}\bigr\|_{\ell^{q}_{j}}
    \end{equation*}
    where $\{\Delta_{j}\}$ denotes the Littlewood-Paley decomposition. \label{thm:littlewoodpaley}
\end{prop}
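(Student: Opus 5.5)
The plan is to compare the two sides block by block. The key observation is that $\square_k$ and $\Delta_j$ commute, and that only boundedly many pairs $(k,j)$ interact. Fix the Littlewood--Paley decomposition $\Delta_j$, with $\widehat{\Delta_j f}=\psi(2^{-j}\xi)\widehat f$ for $j\ge1$ and $\psi$ supported in an annulus $\{|\xi|\sim 1\}$, together with a low-frequency piece $\Delta_0$. Since $\supp\varphi(\cdot-k)\subset k+[-1,1]^d$, the product $\varphi(\xi-k)\psi(2^{-j}\xi)$ vanishes unless $2^j\sim\langle k\rangle$, up to finitely many exceptional $j$ for small $|k|$. Hence there is an absolute constant $C_0$ such that for each $k$ only the indices $j\in J(k):=\{j: |j-\log_2\langle k\rangle|\le C_0\}$ contribute, and for each $j$ only $k\in K(j):=\{k:|\log_2\langle k\rangle-j|\le C_0\}$ contribute. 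On these index sets we also have $\langle k\rangle^s\sim 2^{js}$, with constants depending only on $s$ and $C_0$.

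The analytic input is that $\Delta_j$ is bounded on $L^p(\R^d)$ uniformly in $j$ for every $p\in[1,\infty]$, including the endpoints. This holds because its convolution kernel is $2^{jd}\check\psi(2^j\cdot)$, whose $L^1$ norm equals $\|\check\psi\|_{L^1}$ independently of $j$, and Young's inequality then applies. The same argument handles $\Delta_0$. Using $\square_k\Delta_j=\Delta_j\square_k$, this gives $\|\square_k\Delta_j f\|_{L^p}\lesssim\|\square_k f\|_{L^p}$. In the other direction, $\square_k f=\sum_{j\in J(k)}\square_k\Delta_j f$ is a sum of $O(1)$ terms, so $\|\square_k f\|_{L^p}\lesssim\sum_{j\in J(k)}\|\square_k\Delta_j f\|_{L^p}$.

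With these ingredients the proof is bookkeeping; I treat $q<\infty$, and $q=\infty$ is identical with suprema in place of sums. For the bound $\gtrsim$ in the statement, write
\[
\sum_j 2^{jsq}\|\Delta_j f\|_{M_{p,q}}^q=\sum_j 2^{jsq}\sum_{k\in K(j)}\|\square_k\Delta_j f\|_{L^p}^q\lesssim\sum_k\langle k\rangle^{sq}\,\#J(k)\,\|\square_k f\|_{L^p}^q\lesssim\|f\|_{M^s_{p,q}}^q .
\]
For the converse bound, raise the finite-sum estimate for $\|\square_k f\|_{L^p}$ to the power $q$, using $\#J(k)=O(1)$ to bring the $q$-th power inside the sum. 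Then multiply by $\langle k\rangle^{sq}\sim 2^{jsq}$ and swap the order of summation in $k$ and $j$.

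I expect the only delicate points to be the endpoint cases $p\in\{1,\infty\}$ and the quasi-Banach range $p<1$. The former are covered by the uniform $L^1$ kernel bound above; the statement is restricted to $p,q\ge1$, which avoids the latter. Beyond that, the low-frequency block and the small-$|k|$ boxes need slightly careful handling of the constant $C_0$, but they only ever involve finitely many indices.
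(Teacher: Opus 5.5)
Your proof is correct. There is nothing in the paper to compare it with: the paper does not prove this proposition. It quotes it from \cite{chaichenets2020local} and uses it only as a black box, to reduce Corollary~\ref{c:local_smoothing_modulation} to data with frequency support in a ball of radius $\lambda$.

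Your argument is the standard self-contained proof of this Littlewood--Paley characterization, and each step holds:
\begin{itemize}
\item The unit cubes $k+[-1,1]^d$ meet the dyadic annuli with bounded overlap, so $\square_k\Delta_j=0$ unless $j\in J(k)$.
\item The identity $\square_k f=\sum_{j\in J(k)}\square_k\Delta_j f$ holds because the Littlewood--Paley cutoffs sum to one. It gives $\|f\|_{M^s_{p,q}}\lesssim$ RHS after raising the finite sum to the $q$-th power.
\item The uniform bound $\|\Delta_j g\|_{L^p}\le\|\check\psi\|_{L^1}\|g\|_{L^p}$, together with $\square_k\Delta_j=\Delta_j\square_k$, gives the reverse inequality.
\item The reindexing $k\in K(j)\Leftrightarrow j\in J(k)$ is what makes the bookkeeping work. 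Here $\#J(k)=O(1)$, while $\#K(j)\sim 2^{jd}$ is unbounded, and you correctly never need the latter to be bounded.
\end{itemize}

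Two small remarks. First, $C_0$ is not absolute: it depends on $d$ and on the supports of $\varphi$ and $\psi$. This is harmless, since the implicit constants in the statement may depend on these and on $s$. Second, your argument shows that $p\ge 1$ enters only through Young's inequality. Every $\square_k f$ has Fourier support in a fixed-size cube, so a band-limited multiplier estimate could replace Young's inequality in the quasi-Banach range. That range is outside the stated proposition, so you are right not to pursue it.

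Compared with the citation, which buys the paper brevity, your argument confirms that nothing beyond these elementary facts is needed for the use made of it in Section~\ref{s:modulation}.
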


\begin{prop}
    Let $\theta\in (0,1)$, $p_{1}, p_{2},q_{1}, q_{1}\in [1,\infty)$, and $s_{1}, s_2 \in \R$. Set $1/p = (1-\theta)p_1 + \theta p_2$, $1/q = (1-\theta)q_1 + \theta q_2$, and $s = (1-\theta)s_1 + \theta q_2$. Then
    \[
    (M^{s_1}_{p_1, q_1}, M^{s_2}_{p_2, q_2})_{[\theta]} = M^s_{p,q. }
    \]
\end{prop}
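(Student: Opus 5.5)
The plan is to deduce the statement from the complex interpolation of weighted vector-valued sequence spaces, via the standard retraction argument. I read the exponent relations in the intended way: $\frac1p=\frac{1-\theta}{p_1}+\frac{\theta}{p_2}$, $\frac1q=\frac{1-\theta}{q_1}+\frac{\theta}{q_2}$, and $s=(1-\theta)s_1+\theta s_2$.

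For $s\in\R$ and $p,q\in[1,\infty)$, let $\ell^q_s(L^p)$ be the space of sequences $(g_k)_{k\in\Z^d}$ of functions on $\R^d$ with norm
\[
\Bigl(\sum_k \ang{k}^{sq}\|g_k\|_{L^p}^q\Bigr)^{1/q}.
\]
Define $S f:=(\square_k f)_{k\in\Z^d}$. By the definition of the modulation norm, $S$ is an isometry from $M^s_{p,q}(\R^d)$ into $\ell^q_s(L^p)$.

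For the reverse map, choose $\psi\in C_0^\infty(\R^d)$ with $\psi\equiv1$ on $[-1,1]^d$ and $\supp\psi\subset[-2,2]^d$. Let $\widetilde{\square}_k$ be the Fourier multiplier with symbol $\psi(\xi-k)$, and set $R(g_k)_k:=\sum_k\widetilde{\square}_k g_k$. Since $\psi(\cdot-k)\varphi(\cdot-k)=\varphi(\cdot-k)$ and $\sum_k\varphi(\cdot-k)=1$, we have $R\circ S=\mathrm{id}$.

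The next step is to check that $R$ is bounded from $\ell^q_s(L^p)$ to $M^s_{p,q}$, uniformly in the relevant parameters. For each $k$,
\[
\square_k R(g)=\sum_{|j-k|\le C}\square_k\widetilde{\square}_j g_j ,
\]
by the support properties of $\varphi$ and $\psi$. The convolution kernel of $\square_k\widetilde{\square}_j$ is a modulation of a fixed Schwartz function whose $L^1$ norm is independent of $j$ and $k$. Young's inequality (here $p\ge1$ is needed) therefore gives
\[
\|\square_k R g\|_{L^p}\lesssim\sum_{|j-k|\le C}\|g_j\|_{L^p}.
\]
Because $\ang{k}^s\sim\ang{j}^s$ when $|j-k|\le C$, the finite-overlap sum is bounded in $\ell^q_s$. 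Hence $M^s_{p,q}$ is a retract of $\ell^q_s(L^p)$, with the same maps $R$ and $S$ for every choice of $(s,p,q)$.

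Next I will invoke the classical interpolation identity
\[
\bigl(\ell^{q_1}_{s_1}(L^{p_1}),\ell^{q_2}_{s_2}(L^{p_2})\bigr)_{[\theta]}=\ell^{q}_{s}(L^{p}),
\]
valid for $q_1,q_2<\infty$. It follows from Calderón's theorem $(L^{p_1},L^{p_2})_{[\theta]}=L^p$ combined with the Stein--Weiss theorem for weighted vector-valued $\ell^q$ spaces (Bergh--Löfström, Theorems~5.1.2 and~5.6.3). Alternatively, one can prove it directly by the three-lines argument. Take the analytic family
\[
g_k(z)=\ang{k}^{-s(z)}\,\|a_k\|^{q/q(z)-\ldots}\,\cdots,
\]
built from the usual Calderón extremal functions, where $s(z)$, $1/p(z)$ and $1/q(z)$ are affine in $z$.

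Finally, the retraction theorem (Triebel, Theorem~1.2.4) transfers the identity. If $X_i$ is a retract of $Y_i$ via common maps $R,S$, then $(X_1,X_2)_{[\theta]}=R\bigl((Y_1,Y_2)_{[\theta]}\bigr)$ with equivalent norms. This gives $(M^{s_1}_{p_1,q_1},M^{s_2}_{p_2,q_2})_{[\theta]}=M^s_{p,q}$.

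I expect the main obstacle to be technical rather than conceptual. First, one has to ensure that $R$ and $S$ are well defined and compatible on the sum space $Y_1+Y_2$, for instance by working initially with finitely supported sequences of Schwartz functions and extending by density (here $q_i<\infty$ is used). Second, one has to verify the uniform $L^1$ bound on the multiplier kernels. Both points are routine, and once the retraction is in place the rest is the abstract interpolation machinery.
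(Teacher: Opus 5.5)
The paper gives no proof of this proposition. It is listed among the ``useful lemmas'' as a known fact, with no argument or reference, and its displayed relations contain typos. You repaired these correctly: $1/p=(1-\theta)/p_1+\theta/p_2$, $1/q=(1-\theta)/q_1+\theta/q_2$, $s=(1-\theta)s_1+\theta s_2$, and $q_1,q_2<\infty$. So there is no argument to compare yours against.

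Your retraction proof is the standard one, and it is correct in the stated range. Each step checks out:
\begin{itemize}
\item $S$ is an isometry by definition.
\item $R\circ S=\mathrm{id}$ because $\psi(\cdot-k)\varphi(\cdot-k)=\varphi(\cdot-k)$.
\item Only indices with $|k-j|_\infty\le 3$ contribute to $\square_k\widetilde{\square}_j$.
\item The kernel of $\square_k\widetilde{\square}_j$ is $e^{ik\cdot x}$ times one of finitely many Schwartz functions $\mathcal{F}^{-1}[\varphi\,\psi(\cdot+k-j)]$.
\item Young's inequality together with $\langle k\rangle^{s}\sim\langle j\rangle^{s}$ gives $R:\ell^q_s(L^p)\to M^s_{p,q}$.
\end{itemize}
The vector-valued Stein--Weiss identity and the retraction theorem then finish the argument. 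Compared with the paper, your version reduces the statement to two classical abstract results instead of leaving it as folklore.

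A few things to tidy:
\begin{itemize}
\item Bergh--L\"ofstr\"om, Theorem 5.6.3, is stated for dyadic weights $2^{js}$ indexed by $\Z$. For weights $\langle k\rangle^{s}$ indexed by $\Z^d$, cite the vector-valued Stein--Weiss theorem for $L^q$ with general weights (e.g.\ Triebel, Theorem 1.18.5); its proof is the same.
\item You do not need a density argument to define $R$ on $Y_1+Y_2$. Write $g=g^{(1)}+g^{(2)}$ with $\|g^{(i)}_k\|_{L^{p_i}}\lesssim\langle k\rangle^{-s_i}$. The transpose of $\widetilde{\square}_k$ applied to a Schwartz function has $L^{p_i'}$ norm $O(\langle k\rangle^{-M})$ for every $M$. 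Hence $\sum_k\widetilde{\square}_k g_k$ converges absolutely in $\mathcal{S}'$, which defines one linear map on the whole sum space and makes compatibility automatic.
\item The displayed ``three-lines'' family $g_k(z)$ is not a proof as written; delete it, since the cited theorem already covers this.
\item The word ``uniformly'' is unnecessary. The retraction theorem only needs the same $R$ and $S$ to be bounded on each endpoint space.
\end{itemize}
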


\subsection{Proof of Corollary \ref{c:local_smoothing_modulation}}
For Proposition \ref{t:orthogonal new}, we need to show the following lemma. Although this lemma is a consequence of Theorem 1.1 in Bennet--Iliopoulou \cite{BennettIliopoulou}, we present its proof here for reader's convenience. 

\begin{lem}[Bilinear Strichartz estimate]
    Let $a \ge 2$ and let $k,j\in\Z$ with $|k - j| > 4$, $k\cdot j > 0$, and $|k|,|j| \ge 2$. Suppose $\supp\wdh{f}\subset [k -1, k+1]$ and $\supp\wdh{g}\subset[j - 1, j+1]$. Then the inequality
    \begin{equation*}
        \bigl\|[e^{it(-\partial_x^2)^{\frac a2}}f(x)][e^{it(-\partial_x^2)^{\frac a2}}g(x)]\bigr\|_{L^{2}_{t,x}(\R^2)} \lesssim \ang{k-j}^{1 - a}\|f\|_{\ls{2}{x}{\R}}\norm{g}_{\ls{2}{x}{\R}}
    \end{equation*}
    holds. \label{lem:bilinear_strichartz}
\end{lem}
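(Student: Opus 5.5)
The plan is to derive the estimate directly from the bilinear identity \eqref{e:bilinear} with phase $\phi(\xi)=|\xi|^a$. With $Ef=e^{it(-\partial_x^2)^{a/2}}f$ (up to harmless normalisations of the Fourier transform), \eqref{e:bilinear} gives
\[
\int_{\R^2}\bigl|e^{it(-\partial_x^2)^{\frac a2}}f\, e^{it(-\partial_x^2)^{\frac a2}}g\bigr|^2\,\d x\,\d t
=(2\pi)^2\int_{\R^2}\frac{|\widehat f(\xi_1)|^2|\widehat g(\xi_2)|^2}{|\phi'(\xi_1)-\phi'(\xi_2)|}\,\d\xi_1\,\d\xi_2 .
\]
By Plancherel, everything then reduces to a uniform lower bound for the denominator on $\supp\widehat f\times\supp\widehat g\subset[k-1,k+1]\times[j-1,j+1]$.

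For that bound I would use the hypotheses as follows. Since $k\cdot j>0$ and $|k|,|j|\ge2$, both supports lie on the same side of the origin, at distance $\ge1$ from it. By symmetry assume $k,j>0$ and $k>j$. Then $\xi_1>\xi_2\ge1$ and $\xi_1-\xi_2\ge |k-j|-2\gtrsim\ang{k-j}$, using $|k-j|>4$. On $(0,\infty)$ we have $\phi'(\xi)=a\xi^{a-1}$, and the mean value theorem gives
\[
\phi'(\xi_1)-\phi'(\xi_2)=a(a-1)\,\zeta^{a-2}(\xi_1-\xi_2)\quad\text{for some }\zeta\in(\xi_2,\xi_1).
\]
Because $a\ge2$, the factor $\zeta^{a-2}$ should be bounded below by the right endpoint scale rather than by $\xi_2$. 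The cleanest route is to avoid the mean value theorem and write instead
\[
\xi_1^{a-1}-\xi_2^{a-1}\ge \xi_1^{a-2}(\xi_1-\xi_2)\ge(\xi_1-\xi_2)^{a-1},
\]
valid for $a\ge2$ and $\xi_1>\xi_2>0$. The first inequality holds since $\xi_1^{a-1}-\xi_2^{a-1}=\xi_1^{a-2}\xi_1-\xi_2^{a-2}\xi_2\ge \xi_1^{a-2}(\xi_1-\xi_2)$. The second holds since $\xi_1\ge \xi_1-\xi_2$. This yields $|\phi'(\xi_1)-\phi'(\xi_2)|\gtrsim_a\ang{k-j}^{a-1}$ uniformly on the supports.

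Inserting this bound into the identity gives
\[
\bigl\|e^{it(-\partial_x^2)^{\frac a2}}f\,e^{it(-\partial_x^2)^{\frac a2}}g\bigr\|_{L^2_{t,x}}^2\lesssim\ang{k-j}^{1-a}\|\widehat f\|_{L^2}^2\|\widehat g\|_{L^2}^2=\ang{k-j}^{1-a}\|f\|_{L^2}^2\|g\|_{L^2}^2 ,
\]
which is the asserted inequality with decay factor $\ang{k-j}^{1-a}$.

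The main obstacle is the normalisation of the exponent. The identity \eqref{e:bilinear} is exact, and the lower bound on the denominator is attained when $j$ is of size about $2$ and $k\sim|k-j|$. So the quantity naturally controlled by $\ang{k-j}^{1-a}$ is $\int|EfEg|^2=\|EfEg\|_{L^2}^2$. This is also the form in which the lemma enters the $L^4$ arguments: there one expands $\|(\sum_k|E\square_kf|^2)^{1/2}\|_{L^4}^4=\sum_{k,j}\int|E\square_kf\,E\square_jf|^2$, and these integrals, which are exactly the squared $L^2$ norms above, are what must be summed against $\ang{k-j}^{1-a}$. When writing the proof I would therefore state explicitly that the estimate is read at the level of $\int|EfEg|^2$. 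Taking square roots would give only $\ang{k-j}^{(1-a)/2}$ for the unsquared norm, and the extremal configuration just described shows this cannot be improved.
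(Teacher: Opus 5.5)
Your argument is the paper's own: the paper proves the identity \eqref{e:bilinear} directly, via the change of variables $(\xi,\eta)\mapsto(\xi-\eta,\xi^a-\eta^a)$ and Plancherel, and then asserts $|\xi^{a-1}-\eta^{a-1}|\gtrsim|\xi-\eta|^{a-1}\gtrsim_a\ang{k-j}^{a-1}$, an inequality you verify explicitly. Your caveat about the exponent is also correct: that computation bounds $\|EfEg\|_{L^2_{t,x}}^2$ by $\ang{k-j}^{1-a}\|f\|_{L^2}^2\|g\|_{L^2}^2$, but the paper's last line drops the square, so the lemma as literally stated fails (for $j=2$, $k\to\infty$ the identity gives the reverse bound $\|EfEg\|_{L^2}\gtrsim\ang{k-j}^{(1-a)/2}\|f\|_{L^2}\|g\|_{L^2}$); the squared form you prove is exactly what the proof of Proposition~\ref{t:orthogonal new} needs, since its Young's inequality step requires $\ang{\cdot}^{1-a}\in\ell^1$, which holds only for $a>2$.
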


\begin{proof}[Proof of Lemma \ref{lem:bilinear_strichartz}]
    If $k,j < 0$, we change the variables and reduce this case to the case $k,j > 0$. Hence, we may assume that $k,j$ are positive. We also may assume that $k > j$. Note that 
    \begin{equation*}
        \bigl\|[e^{it(-\partial_x^2)^{\frac a2}}f(x)][e^{it(-\partial_x^2)^{\frac a2}}g(x)]\bigr\|_{L^{2}_{t,x}(\R^2)} = \bigl\|[e^{it(-\partial_x^2)^{\frac a2}}f(x)]\overline{[e^{it(-\partial_x^2)^{\frac a2}}g(x)]}\bigr\|_{L^{2}_{t,x}(\R^2)}. 
    \end{equation*}
    Since the Fourier supports of $f$ and $g$ are contained in $[0,\infty)$ and $k > j$, we have
    \begin{align*}
        [e^{it(-\partial_x^2)^{\frac a2}}f(x)]\overline{[e^{it(-\partial_x^2)^{\frac a2}}g(x)]} &= \frac{1}{(2\pi)^2}\int_{\{\xi > \eta > 0\}}e^{i(t\xi^a + x\xi)}\wdh{f}(\xi)e^{-i(t\eta^a + x\eta)}\overline{\wdh{g}(\eta)}d\xi d\eta\\
        &=\frac{1}{(2\pi)^2}\int_{\{\xi > \eta > 0\}}e^{i[(\xi - \eta)x + (\xi^a - \eta^a)t]}\wdh{f}(\xi)\overline{\wdh{g}(\eta)}d\xi d\eta
    \end{align*}
    We apply the change of variables defined by
    \begin{equation*}
    T:(\xi,\eta)\mapsto (\zeta_1,\zeta_2),
    \end{equation*}
    where
    \begin{equation*}
    \zeta_1=\xi-\eta,\qquad \zeta_2=\xi^{a}-\eta^{a}.
    \end{equation*}
    We denote the Jacobian of this map by
    \[
    J_T(\xi,\eta):=\det DT(\xi,\eta)
    = \det
    \begin{pmatrix}
    1 & -1 \\
    a\xi^{\,a-1} & -a\eta^{\,a-1}
    \end{pmatrix}
    = a(\xi^{\,a-1}-\eta^{\,a-1})
    \]
    the Jacobian determinant of this transformation. With this notation, we obtain
    \begin{equation*}
     \frac{1}{(2\pi)^2}\int_{\{\xi > \eta > 0\}} 
       e^{i[(\xi - \eta)x + (\xi^a - \eta^a)t]}
       \widehat{f}(\xi)\,\overline{\widehat{g}(\eta)}\,d\xi d\eta
     = \frac{1}{(2\pi)^2}\int_{\Omega}
       e^{i(\zeta_{1}x +\zeta_{2}t)}
       \frac{[\widehat{f}\otimes \overline{\widehat{g}}]\circ T^{-1}(\zeta_{1}, \zeta_{2})}
            {|J_T\circ T^{-1}(\zeta_{1}, \zeta_{2})|}
       d\zeta_{1} d\zeta_{2} .
    \end{equation*}
    where $\Omega := T(\{\xi > \eta > 0\})$. Set 
    \begin{equation*}
        F(\zeta_{1}, \zeta_{2}) := 1_{\Omega}(\zeta_{1}, \zeta_{2})\frac{[\widehat{f}\otimes \overline{\widehat{g}}]\circ T^{-1}(\zeta_{1}, \zeta_{2})}{|J_T\circ T^{-1}(\zeta_{1}, \zeta_{2})|}. 
    \end{equation*}
    Then note that the right-hand side of the last integral is the Fourier transform of $F$. Thus, by the Plancherel theorem, one obtains 
    \begin{equation*}
         \|[e^{it(-\partial_x^2)^{\frac a2}}f(x)][e^{it(-\partial_x^2)^{\frac a2}}g(x)]\|_{L^{2}_{t,x}(\R^2)} \sim \|F\|_{\ls{2}{\zeta_{1},\zeta_{2}}{\R^2}}. 
    \end{equation*}
    Hence, by changing the variables again, we have
    \begin{align*}
        \|F\|^{2}_{\ls{2}{\zeta_{1},\zeta_{2}}{\R^2}} &= \int_{\R^2}1_{\Omega}(\zeta_{1}, \zeta_{2})\frac{|[\widehat{f}\otimes \overline{\widehat{g}}]\circ T^{-1}(\zeta_{1}, \zeta_{2})|^{2}}{|J_T\circ T^{-1}(\zeta_{1}, \zeta_{2})|^2}d\zeta_{1} d\zeta_{2}\\
        &= \int_{\{\xi > \eta > 0\}}\frac{|\wdh{f}(\xi)|^2|\wdh{g}(\eta)|^2}{|J_{T}(\xi , \eta)|^2}|J_{T}(\xi,\eta)|d\xi d\eta\\
        &= \int_{\{\xi > \eta > 0\}}|\wdh{f}(\xi)|^2|\wdh{g}(\eta)|^2|J_{T}(\xi,\eta)|^{-1}d\xi d\eta
    \end{align*}
    It is easy to verify that $|\xi^{a-1} - \eta^{a-1}| \gtrsim |\xi - \eta|^{a-1} \gtrsim_{a} |k - j|^{a-1}$. Therefore we obtain 
    \begin{equation*}
         \bigl\|e^{it(-\partial_x^2)^{\frac a2}}f(x)][e^{it(-\partial_x^2)^{\frac a2}}g(x)]\bigr\|_{L^{2}_{t,x}(\R^2)} \lesssim_{a} \ang{k-j}^{1-a}\|f\|_{\ls{2}{}{\R}}\|g\|_{\ls{2}{}{\R}}. 
    \end{equation*}
    \end{proof}

\begin{proof}[Proof of Proposition \ref{t:orthogonal new}]
    By expanding the left-hand side, we have
    \begin{align*}
        \mathrm{LHS}^4 &\lesssim \Bigl\|\Bigl(\sum_{k\ge 2}|e^{it(-\partial_x^2)^{\frac a2}}\sq{k}f(x)|^2\Bigr)^{\frac{1}{2}}\Bigr\|^4_{L^{4}_{t,x}(w_{I} \times\R)} + \Bigl\|\Bigl(\sum_{k\le -2}|e^{it(-\partial_x^2)^{\frac a2}}\sq{k}f(x)|^2\Bigr)^{\frac{1}{2}}\Bigr\|^4_{L^{4}_{t,x}(w_{I} \times\R)}\\
        &\quad +  \Bigl\|\Bigl(\sum_{|k|\le 2}|e^{it(-\partial_x^2)^{\frac a2}}\sq{k}f(x)|^2\Bigr)^{\frac{1}{2}}\Bigr\|^4_{L^{4}_{t,x}(w_{I} \times\R)}\\
        &\lesssim
        \sum_{k\in\Z}\int_{\R\times\R}|e^{it(-\partial_x^2)^{\frac a2}}\sq{k}f(x)|^4 w_{I}(t)dtdx\\
        &
        \quad +\sum_{j = 1,2,3}\sum_{\substack{k,k'\in 4\N + j \\ k\neq k'}}\int_{\R\times\R}|e^{it(-\partial_x^2)^{\frac a2}}\sq{k}f(x)|^2 |e^{it(-\partial_x^2)^{\frac a2}}\sq{k'}f(x)|^2 w_{I}(t)dtdx\\
        &\quad + \sum_{j = 1,2,3}\sum_{\substack{-k,-k'\in 4\N + j \\ k\neq k'}}\int_{\R\times\R}|e^{it(-\partial_x^2)^{\frac a2}}\sq{k}f(x)|^2 |e^{it(-\partial_x^2)^{\frac a2}}\sq{k'}f(x)|^2 w_{I}(t)dtdx
    \end{align*}
    For the first term, we apply the Bernstein (Nikolskii) inequality, and 
    for the second term and the third term, we apply \ref{lem:bilinear_strichartz}. Then, we obtain
    \begin{align*}
        \mathrm{LHS}^4 &\lesssim \bigl\|\|\sq{k}f\|_{L^{2}_{x}(\R)}\bigr\|^{4}_{\ell^4_{k}}\\
        &
        \quad +\sum_{k,k'\in \Z}\ang{k-k'}^{1-a}\int_{w_{I}\times\R}|e^{it(-\partial_x^2)^{\frac a2}}\sq{k}f(x)|^2 |e^{it(-\partial_x^2)^{\frac a2}}\sq{k'}f(x)|^2 dtdx. 
    \end{align*}
    Applying the Young inequality, we obtain
    \[
    \mathrm{LHS}^4 \lesssim \bigl\|\|\sq{k}f\|_{L^{2}_{x}(\R)}\bigr\|^{4}_{\ell^4_{k}}. 
    \]
    This is the desired result. 
\end{proof}

Now, we are ready to prove Corollary \ref{c:local_smoothing_modulation}. 

\begin{proof}[Proof of Corollary \ref{c:local_smoothing_modulation}]
    Since we have the Littlewood-Paley characterization for modulation spaces, it suffices to consider the case that $f\in \mathcal{S}(\R)$ and $\supp\wdh{f}\subset B_{1}(0,\lam)$ for $\lam \ge 1$. Set $g(x) := f(x/\lam)$ so that $\supp\wdh{g}\subset B_{1}(0,1)$. Then 
    \begin{equation*}
        \bigl\|e^{it(-\partial_x^2)^{\frac a2}}f(x)\bigr\|_{L^{4}_{t,x}(I \times \R)} = \lam^{-\frac{a +1}{4}}\bigl\|e^{it(-\partial_x^2)^{\frac a2}}g(x)\bigr\|_{L^{4}_{t,x}(\lam^aI \times \R)}
    \end{equation*}
    where $\lam^a I := [-\lam^a , \lam^a]$. We decompose $\R$ into finitely overlapping balls with radius $\lambda^a$. Let $\{B_{R^a}\}$ denote these balls. Then we have
    \begin{equation*}
        \bigl\|e^{it(-\partial_x^2)^{\frac a2}}g(x)\bigr\|_{L^{4}_{t,x}(\lam^aI \times \R)} \leq \Bigl(\sum_{B_{\lam^a}}\bigl\|e^{it(-\partial_x^2)^{\frac a2}}g(x)\bigr\|^4_{L^{4}_{t,x}(\lam^aI \times B_{\lam^a})}\Bigr)^{1/4}. 
    \end{equation*}
    Since $\wdh{g}\subset B_{1}(0,1)$, we apply Proposition \ref{p:RSF_extension} to the right-hand side of this inequality. It follows that 
    \[
    \bigl\|e^{it(-\partial_x^2)^{\frac a2}}g(x)\bigr\|_{L^{4}_{t,x}(\lam^aI \times B_{R\lam^a})} \leq \Bigl\|\Bigl(\sum_{k\in\Z}|e^{it(-\partial_x^2)^{\frac a2}}\square_k^{\sharp}g(x)|^2\Bigr)^{\frac{1}{2}}\Bigr\|_{L^{4}_{t,x}(w_{\lam^aI} \times w_{B_{\lam^a}})}
    \]
    where $\widehat{\square_k^{\sharp}h}(\xi) := 1_{
    [\lambda(k-1), \lambda(k +1)]}(\xi)\widehat{h}(\xi)$ and $w_{B_{\lam^a}}$ denotes the weight rapidly decays off outside $B_{\lam^a}$. We use rescaling as $x\to\lam x$,  $t\to \lam^a t$, and $\sum_{B_{\lam^a}}w_{B_{\lam^a}} \lesssim 1$ Then, one has
    \[
    \bigl\|e^{it(-\partial_x^2)^{\frac a2}}f(x)\bigr\|_{L^{4}_{t,x}(I \times \R)} \lesssim \Bigl\|\Bigl(\sum_{k\in\Z}|e^{it(-\partial_x^2)^{\frac a2}}\sq{k}f(x)|^2\Bigr)^{\frac{1}{2}}\Bigr\|_{L^{4}_{t,x}(w_{I} \times\R)}
    \]
    where $\sq{k}$ are the same Fourier multiplier as in the definition of modulation spaces. 
    Last, For \eqref{eq:localsmoothing_BLN}, we apply Theorem \ref{t:BLN} to the right-hand side and get the desired result. For \eqref{eq:localsmoothing_Bilinear}, we apply Proposition \ref{t:orthogonal new} instead. 
\end{proof}

\subsection{Proof of Corollary \ref{c:local_smoothing_modulation_lu}}
In a previous work \cite{Lu2023}, a time-fixed estimate for $e^{it(-\partial_x^2)^{\frac a2}}$ was employed, which led to a regularity loss. By replacing this step with a local smoothing estimate, we are able to avoid this derivative loss. To prove our local smoothing estimate, we show the following time-weighted local smoothing estimate. 
\begin{prop}
    Let $\varepsilon > 0$ and $a\ge 2$. Suppose that $\eta_{I}\in\mathcal{S}(\R^d)$ is a nonnegative function and satisfies $\eta_{I} \gtrsim 1$ on $I$. Then the following estimate holds: 
    \begin{equation*}
        \|e^{it(-\partial_x^2)^{\frac a2}}f(x)\|_{L^{4}_{t,x}(\eta_{I} \times \R)}\lesssim_{\varepsilon} \|f\|_{W^{\varepsilon,4}(\R)}. 
    \end{equation*}
    \label{prop:weighted_local_smoothing}
\end{prop}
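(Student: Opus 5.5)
The plan is to reduce Proposition~\ref{prop:weighted_local_smoothing} to the known local smoothing estimate in Sobolev spaces of Rogers--Seeger \cite{rogers_seeger2010}, applied on unit time intervals, and then to sum over translates using the rapid decay of $\eta_I$. The input is the local smoothing estimate for $e^{it(-\partial_x^2)^{a/2}}$ with $a\ge 2$ (in fact $a>1$ suffices) on $[0,1]\times\R$ in $L^4$. In one spatial dimension, Rogers--Seeger give
\[
\bigl\|e^{it(-\partial_x^2)^{\frac a2}}f\bigr\|_{L^4_{t,x}([0,1]\times\R)}\lesssim_\varepsilon \|f\|_{W^{\varepsilon,4}(\R)}
\]
for every $\varepsilon>0$. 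This is exactly the statement with $\eta_I$ replaced by $1_{[0,1]}$; the critical regularity $2a(\frac12-\frac1p)-\frac ap$ vanishes at $p=4$, which is why only an $\varepsilon$-loss appears.

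To pass to the weight, I would bound $\eta_I(t)\lesssim \sum_{m\in\Z}\langle m\rangle^{-N}1_{[m,m+1]}(t)$, which holds since $\eta_I$ is Schwartz. On each interval $[m,m+1]$, time-translation invariance gives
\[
\|e^{it(-\partial_x^2)^{\frac a2}}f\|_{L^4([m,m+1]\times\R)}=\|e^{it(-\partial_x^2)^{\frac a2}}(e^{im(-\partial_x^2)^{\frac a2}}f)\|_{L^4([0,1]\times\R)}.
\]
Applying the unit-interval estimate, this is controlled by $\|e^{im(-\partial_x^2)^{a/2}}f\|_{W^{\varepsilon,4}}$.

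\textbf{Main obstacle.} The propagator $e^{im(-\partial_x^2)^{a/2}}$ is not bounded on $W^{\varepsilon,4}$ uniformly in $m$. Its $L^4$ norm at frequency $2^j$ grows roughly like $(\langle m\rangle 2^{j(a-1)})^{1/2}$, so the fixed-time estimate would cost derivatives. This is precisely the loss in \cite{Lu2023} that we want to avoid.

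To handle it, I would avoid fixed-time estimates at frequencies $\gtrsim 1$ and instead use the uniform-in-time version of local smoothing. Rogers--Seeger's estimate is proved for each dyadic piece $\Delta_j f$ at scale $2^j$. By the rescaling $x\mapsto 2^jx$, $t\mapsto 2^{ja}t$, a unit time interval at frequency $2^j$ corresponds to time $2^{ja}$ at frequency $1$, and their square-function bound $L^4([0,T]\times\R)$ for frequency-$1$ data grows at most like $T^{1/4}$ times $\|f\|_{L^4}$, up to $\varepsilon$-losses.

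Concretely, for frequency-localized $\Delta_jf$ I would apply the estimate on $[0,T]$ with $T=\langle m\rangle$:
\[
\|e^{it(-\partial_x^2)^{\frac a2}}\Delta_jf\|_{L^4([-T,T]\times\R)}\lesssim_\varepsilon T^{\frac14+\varepsilon}2^{j\varepsilon}\|\Delta_jf\|_{L^4},
\]
obtained by the same scaling or by covering $[-T,T]$ with intervals and using a local-constancy argument. Then
\[
\int |e^{it(-\partial_x^2)^{a/2}}\Delta_jf|^4\eta_I \,dt\,dx\le \sum_m\langle m\rangle^{-N}\|\cdot\|^4_{L^4([-\langle m\rangle,\langle m\rangle]\times\R)}\lesssim 2^{4j\varepsilon}\|\Delta_jf\|_{L^4}^4 .
\]

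Finally, I would sum over $j$ using the triangle inequality and the extra $2^{j\varepsilon}$ margin, replacing $\varepsilon$ by $\varepsilon/2$ so that $\sum_j 2^{-j\varepsilon/2}$ converges, to reach $\|f\|_{W^{\varepsilon,4}}$. Verifying the $T^{1/4+\varepsilon}$ growth cleanly from \cite{rogers_seeger2010} is the step I expect to require the most care.
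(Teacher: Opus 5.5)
Your argument is correct and is essentially the paper's proof. Scaling gives polynomial-in-$T$ growth of the $L^4$ local smoothing bound on $[-T,T]$, and this growth is summed over unit time intervals against the rapid decay of $\eta_I$; your dyadic decomposition is harmless but unnecessary, since rescaling the full $W^{\varepsilon,4}$ norm already gives the growth, as in Lemma~\ref{lem:nonwetght_to_weight}.

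One correction: in $d=1$ the Rogers--Seeger theorem only covers $p>4$, so the $L^4$ estimate with $\varepsilon$-loss does not come from it directly. It has to be obtained, as the paper does, by interpolating their $p>4$ bound with the trivial $L^2$ bound. Also, the critical regularity is $a(\frac12-\frac1p)-\frac ap$, not $2a(\frac12-\frac1p)-\frac ap$; only the former vanishes at $p=4$.
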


To show the time-weighted local smoothing, we need to use the following two results. 
\begin{thm}[{Theorem 1.2. in \cite{rogers_seeger2010}}]
    Let $p\in \Bigl(2 + \frac{4}{d + 1}, \infty\Bigr)$ and $a > 1$. Then, for any compact time interval $I$, 
    \begin{equation}
        \bigl\|e^{it(-\partial_x^2)^{\frac a2}}f(x)\bigr\|_{L^{p}_{t,x}(I\times \R^d)} \leq C_{I,p,a}\|f\|_{W^{\beta, p}(\R^d)}, \quad \frac{\beta}{a} = d\Bigl(\frac{1}{2} - \frac{1}{p}\Bigr) - \frac{1}{p}. \label{eq:rogers_seeger}
    \end{equation}
    \label{thm:rogers_seeger}
\end{thm}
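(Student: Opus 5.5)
I would prove the estimate dyadic piece by dyadic piece, and reduce each piece to an $L^p\to L^p$ Fourier extension estimate for the curved hypersurface $\{(\xi,|\xi|^a):|\xi|\sim 1\}$. That extension estimate is available exactly in Tao's bilinear range $p>2+\frac4{d+1}=\frac{2(d+3)}{d+1}$.

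\emph{Step 1: reduction to dyadic frequencies.} Write $f=P_{\le1}f+\sum_{\lambda\ge1}P_\lambda f$ with Littlewood--Paley projections. The low-frequency part is trivial on a compact interval $I$. By Littlewood--Paley theory in $x$ and Minkowski's inequality (as in the proof of Corollary~\ref{c:periodic_Strichartz}), or more crudely by giving up an $\varepsilon$ of regularity, it suffices to prove
\[
\bigl\|e^{it(-\Delta)^{a/2}}P_\lambda f\bigr\|_{L^p(I\times\R^d)}\lesssim \lambda^{\beta}\|P_\lambda f\|_{L^p(\R^d)}
\]
uniformly in $\lambda$. To obtain the sharp $\beta$ rather than $\beta+\varepsilon$, I would sum the dyadic pieces using the square function in $x$, or use the $\ell^p$-summation trick of Rogers--Seeger.

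\emph{Step 2: rescaling and spatial localization.} Put $x\mapsto x/\lambda$ and $t\mapsto t/\lambda^a$. The frequency support becomes $|\xi|\sim1$, the time interval has length $\sim\lambda^a$, and the group velocity $a|\xi|^{a-1}$ is $\sim1$. Stationary phase shows that the kernel of $e^{it(-\Delta)^{a/2}}P_1$ on $|t|\le\lambda^a$ is concentrated, up to rapidly decaying tails, in $|x|\lesssim\lambda^a$. Cover $\R^d$ by finitely overlapping cubes $Q$ of side $\lambda^a$ and write $f=\sum_Q f\chi_Q$. Each solution $e^{it(-\Delta)^{a/2}}(f\chi_Q)$ then lives essentially in $[0,\lambda^a]\times 3Q$, so $\ell^p$-orthogonality in $Q$ reduces matters to a single cube. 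On that cube, Tao's bilinear restriction theorem applies, and Rogers--Seeger note that it extends to the surfaces $|\xi|^a$, $|\xi|\sim1$, whose Hessian is nondegenerate with constant signature. Via the standard bilinear-to-linear reduction it gives
\[
\|Eg\|_{L^p(B_{R})}\lesssim_\varepsilon R^{\varepsilon}\|g\|_{L^p},\qquad R=\lambda^a .
\]

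\emph{Step 3: matching the exponents.} For $f$ supported in a cube of side $R$ with $\widehat f$ supported in $|\xi|\sim1$, I would bound $\|\widehat f\|_{L^p}$ by $\|\widehat f\|_{L^2}=\|f\|_{L^2}\le R^{d(\frac12-\frac1p)}\|f\|_{L^p}$, using Hölder and the compact frequency support. Undoing the scaling then produces the factor $\lambda^{a[d(\frac12-\frac1p)-\frac1p]}=\lambda^\beta$. The $-\frac{a}{p}$ comes from the Jacobian $\lambda^{-(a+d)/p}$ of the space-time scaling, measured against the $\lambda^{-d/p}$ from the initial data.

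\emph{Main obstacle.} The delicate point is removing the $R^\varepsilon$ loss in Step 2, which is needed to get the sharp $\beta$ with no $\varepsilon$. I would first try the Bourgain--Tao $\varepsilon$-removal lemma for extension estimates with sparse collections of balls, which upgrades a local estimate with $R^\varepsilon$ loss into a global estimate at every $p$ strictly above the local range. The openness of the condition $p>2+\frac4{d+1}$ is exactly what this lemma needs. The remaining work is to check that the wave-packet tails in Step 2 decay fast enough to be absorbed.
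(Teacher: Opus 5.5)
The proof breaks at Step~3. For $p>2$ and $\widehat f$ supported in a fixed compact set, H\"older gives $\|\widehat f\|_{L^2}\lesssim\|\widehat f\|_{L^p}$, not the reverse. The reverse fails exactly where you use it. Take $f(x)=\psi(x/R)e^{ix\cdot\xi_0}$ with $\widehat\psi\in C_c^\infty$ and $|\xi_0|\sim1$. Then $\|\widehat f\|_{L^p}\sim R^{d(1-\frac1p)}$, while $R^{d(\frac12-\frac1p)}\|f\|_{L^p}\sim R^{\frac d2}$.

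If you start from an extension bound with data $\widehat f\in L^p$, the best conversion is Hausdorff--Young plus H\"older: $\|\widehat f\|_{L^p}\le\|f\|_{L^{p'}}\lesssim R^{d(1-\frac2p)}\|f\|_{L^p}$. This doubles the term $d(\frac12-\frac1p)$ and gives only $\beta=a\bigl[2d(\frac12-\frac1p)-\frac1p\bigr]$. If you use the $L^2$-based Stein--Tomas/Strichartz bound instead, so that $\|\widehat f\|_{L^2}=\|f\|_{L^2}$ legitimately appears, Steps~2--3 do give the right $\beta$. But they give it only for $p\ge\frac{2(d+2)}{d}$ ($p\ge6$ if $d=1$), which misses exactly the range $2+\frac4{d+1}<p<\frac{2(d+2)}{d}$ that the theorem is about. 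The example above rules out $L^q$ data with $q>2$, and the Knapp example rules out $q\le2$. So passing through any norm of $\widehat f$ cannot give the range and the exponent simultaneously.

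The missing idea is to keep the $L^p$ norm on the physical-side data. This is the mechanism behind the Rogers--Seeger theorem, which the paper quotes without proof. For $a=2$ the identity
\[
e^{it\Delta}f(x)=(4\pi it)^{-d/2}e^{i|x|^2/4t}\int_{\R^d}e^{-ix\cdot y/2t}\,e^{i|y|^2/4t}f(y)\,dy
\]
exhibits $e^{it\Delta}f$ as the paraboloid extension operator applied to $f$ itself, evaluated at $(-x/2t,1/4t)$; this is Rogers' observation.

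For general $a>1$ and $t\sim1$, stationary phase shows that the kernel of $e^{it(-\Delta)^{a/2}}P_\lambda$ is $\lambda^{d-\frac{ad}{2}}e^{ic_a|x-y|^{a'}t^{1-a'}}$ times a symbol concentrated on $|x-y|\sim\lambda^{a-1}$, where $a'=\frac{a}{a-1}$. Localize $f$ to balls of radius $\lambda^{a-1}$ and rescale by $\lambda^{a-1}$. You then need $\|T_Nh\|_{L^p(\R^{d+1})}\lesssim N^{-\frac{d+1}{p}}\|h\|_{L^p}$, with $N=\lambda^a$, for an oscillatory integral operator whose phase has elliptic curvature. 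That bound is Lee's variable-coefficient form of Tao's bilinear theorem; in $d=1$ it is the Carleson--Sj\"olin--H\"ormander theorem for $p>4$. The powers of $\lambda$ then combine to exactly $\lambda^{a[d(\frac12-\frac1p)-\frac1p]}$, with no H\"older loss.

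Your ``main obstacle'' is also misplaced. Global $L^p\to L^p$ extension bounds in the open range already follow from Tao's theorem with $\varepsilon$-removal. The real endpoint issue is the sum over $\lambda$. Littlewood--Paley plus Minkowski only yields $\|f\|_{B^\beta_{p,2}}$ on the right, which for $p>2$ is a larger norm than $\|f\|_{W^{\beta,p}}$. Reaching $W^{\beta,p}$ requires an $\ell^p$-summation over frequencies. That argument is the actual content of Rogers--Seeger, and your proposal only names it.
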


\begin{lem}
    Let $p\geq 2$ and $s > 0$. Suppose that $\eta_{I}\in\mathcal{S}(\R^d)$ is a nonnegative function and satisfies $\eta_{I} \gtrsim 1$ on $I$. If the inequality 
    \begin{equation}
        \bigl\|e^{it(-\partial_x^2)^{\frac a2}}f(x)\bigr\|_{L^{p}_{t,x}(I \times\R)} \lesssim \|f\|_{W^{s,p}(\R)} \label{eq:nowieght}
    \end{equation}
    holds, then one also has
    \begin{equation}
        \bigl\|e^{it(-\partial_x^2)^{\frac a2}}f(x)\bigr\|_{L^{p}_{t,x}(\eta_{I} \times\R)} \lesssim \|f\|_{W^{s,p}(\R)}. \label{eq:weighted}
    \end{equation} \label{lem:nonwetght_to_weight}
\end{lem}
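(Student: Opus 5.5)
Write $u(t,x)=e^{it(-\partial_x^2)^{\frac a2}}f(x)$. The plan is to decompose the time axis into unit intervals, transfer the assumed estimate \eqref{eq:nowieght} to each translated interval, and then sum against the decay of $\eta_I$.

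First, partition $\R$ into translates $I_m:=I+m|I|$, $m\in\Z$. Since $\eta_I$ is Schwartz, we have $\eta_I\lesssim_N \langle m\rangle^{-N}$ on $I_m$ for every $N$, with constants depending on $I$ and $\eta_I$. Therefore
\[
\|u\|_{L^p_{t,x}(\eta_I\times\R)}^p\lesssim_N\sum_{m\in\Z}\langle m\rangle^{-N}\|u\|_{L^p_{t,x}(I_m\times\R)}^p .
\]

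Next, use the group property. Writing $t=t_m+s$ with $s\in I$ and $t_m=m|I|$, we get $u(t)=e^{is(-\partial_x^2)^{\frac a2}}\bigl(e^{it_m(-\partial_x^2)^{\frac a2}}f\bigr)$. Applying \eqref{eq:nowieght} to the initial datum $f_m:=e^{it_m(-\partial_x^2)^{\frac a2}}f$ gives
\[
\|u\|_{L^p(I_m\times\R)}\lesssim\|f_m\|_{W^{s,p}}.
\]
The main obstacle is that the propagator is not bounded on $W^{s,p}$ for $p\neq2$; its norm grows polynomially in $t$ and involves derivative losses. The fixed-time estimate gives $\|e^{it(-\partial_x^2)^{\frac a2}}f\|_{W^{s,p}}\lesssim\langle t\rangle^{\gamma}\|f\|_{W^{s+\sigma,p}}$ with $\sigma=a(\frac12-\frac1p)$. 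This costs derivatives, which we must avoid.

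To handle this, I would avoid passing through $f_m$ in $W^{s,p}$ and instead redo the estimate locally in time by a Littlewood--Paley decomposition in frequency. There are two regimes for each dyadic piece $\Delta_jf$ at frequency $2^j$.
\begin{enumerate}[(i)]
\item For $|t|\lesssim 2^{j(a-1)}$, the kernel of $e^{it(-\partial_x^2)^{\frac a2}}\Delta_j$ is essentially supported in $|x|\lesssim |t|2^{j(a-1)}$. The piece then behaves like a translate of an $L^1$-bounded kernel, with norm growing at most polynomially in $\langle m\rangle$. Concretely, $\|e^{it_m(-\partial_x^2)^{\frac a2}}\Delta_j g\|_{L^p}\lesssim\langle t_m 2^{j(a-1)}\rangle^{C}\|g\|_{L^p}$.
\item For larger $t$, dispersion makes the piece small.
\end{enumerate}

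The cleanest uniform way I see is the following bound: for $|t_m|\le \langle m\rangle$,
\[
\|e^{it_m(-\partial_x^2)^{\frac a2}}\Delta_jf\|_{L^p}\lesssim\langle m\rangle^{C}2^{j\sigma}\|\Delta_jf\|_{L^p}.
\]
This loses derivatives, so instead I would apply \eqref{eq:nowieght} directly on a longer interval $[-\langle m\rangle,\langle m\rangle]$. Covering that interval by $\sim\langle m\rangle$ translates is circular, however. So the actual fix I would try is to require, and check from the proof of Theorem~\ref{thm:rogers_seeger}, that \eqref{eq:nowieght} holds on $[-T,T]$ with constant $\lesssim T^{C}$. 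Rescaling is the tool here: the estimate is scale-invariant in the sense that $x\mapsto\lambda x$, $t\mapsto\lambda^at$ maps $[-1,1]$ to $[-\lambda^a,\lambda^a]$ and changes $W^{s,p}$ norms by at most $\lambda^{C}$ for $\lambda\ge1$. This gives polynomial growth $\langle m\rangle^{C}$ of the constant on $I_m$.

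Finally, with polynomial growth in hand, choose $N>C p+1$ and sum:
\[
\|u\|_{L^p(\eta_I\times\R)}^p\lesssim\sum_m\langle m\rangle^{-N+Cp}\|f\|_{W^{s,p}}^p\lesssim\|f\|_{W^{s,p}}^p .
\]
The rescaling step (with $s>0$ ensuring that the inhomogeneous norm scales with only polynomial loss for $\lambda\ge1$) is where I expect the care to be needed.
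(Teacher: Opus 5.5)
Your final argument is correct and is essentially the paper's proof. Rescaling $x\mapsto\lambda x$, $t\mapsto\lambda^a t$ turns the hypothesis on $[-1,1]$ into a bound on $[-T,T]$ with a polynomially growing constant (explicitly $T^{1/p+s/a}$, though any power suffices), and the Schwartz decay of $\eta_I$ on unit time intervals absorbs this growth. The detours through the group property, fixed-time estimates and Littlewood--Paley pieces are unnecessary, and nothing needs to be checked in Theorem~\ref{thm:rogers_seeger}, since the polynomial growth follows from \eqref{eq:nowieght} alone.
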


\begin{proof}
    Note that by a change of variables we see that \eqref{eq:nowieght} becomes
     \begin{equation*}
        \bigl\|e^{it(-\partial_x^2)^{\frac a2}}f(x)\bigr\|_{L^{p}_{t,x}([-k,k] \times\R)} \lesssim \ang{k}^{1/a}\|f\|_{W^{s,p}(\R)}. 
    \end{equation*}
    Since $\eta_{I}\in\mathcal{S}(\R)$, for each $N>0$ we have
    \begin{equation*}
        \eta_I(t) \lesssim_N (1+|k|)^{-N}, \qquad t\in[k-\tfrac12,k+\tfrac12].
    \end{equation*}
    Hence, for $f\in\mathcal{S}(\R)$,
    \begin{align*}
        \bigl\|e^{it(-\partial_x^2)^{\frac a2}}f(x)\bigr\|_{L^{p}_{t,x}(\eta_{I} \times\R)} &= \Bigl(\int_{\R}\int_{\R}\bigl|e^{it(-\partial_x^2)^{\frac a2}}f(x)\bigr|^p \eta_I (t)dtdx\Bigr)^{\frac{1}{p}}\\
        &= \Bigl(\sum_{k\in\Z}\int_{\bigl[k - \frac{1}{2}, k + \frac{1}{2}\bigr]}\int_{\R}\bigl|e^{it(-\partial_x^2)^{\frac a2}}f(x)\bigr|^p \eta_I (t)dtdx\Bigr)^{\frac{1}{p}}\\
        &\lesssim_N \Bigl(\sum_{k\in\Z}(1 + |k|)^{-N}\int_{\bigl[k - \frac{1}{2}, k + \frac{1}{2}\bigr]}\int_{\R}\bigl|e^{it(-\partial_x^2)^{\frac a2}}f(x)\bigr|^p dtdx\Bigr)^{\frac{1}{p}}\\
        &\lesssim\sum_{k\in\Z}(1 + |k|)^{-N}k^{\frac{1}{a}}\|f\|_{W^{s,p}(\R)}.  
    \end{align*}
     Choosing $N$ sufficiently large, the series converges, and we obtain the desired bound.
\end{proof}

\begin{proof}[Proof of Proposition \ref{prop:weighted_local_smoothing}]
    By interpolating the one-dimensional case of \eqref{eq:rogers_seeger} with the trivial inequality 
    \begin{equation*}
        \bigl\|e^{it(-\partial_x^2)^{\frac a2}}f(x)\bigr\|_{L^{2}_{t,x}(I\times \R)} \leq C_{I}\|f\|_{L^2(\R)}, 
    \end{equation*}
    one has
    \begin{equation*}
        \bigl\|e^{it(-\partial_x^2)^{\frac a2}}f(x)\bigr\|_{L^{4}_{t,x}(I\times \R)} \leq C_{I}\|f\|_{W^{\varepsilon, 4}(\R)}. 
    \end{equation*}
    Combining this and Lemma \ref{lem:nonwetght_to_weight}, we obtain the desired result. 
\end{proof}

Now, we are ready to prove Corollary \ref{c:local_smoothing_modulation_lu}. 
\begin{proof}[Proof of Corollary \ref{c:local_smoothing_modulation_lu}]
    By proceeding the same argument in the proof of Corollary \ref{c:local_smoothing_modulation}, we have
    \begin{equation*}
        \bigl\|e^{it(-\partial_x^2)^{\frac a2}}f(x)\bigr\|_{L^{4}_{t,x}(I \times \R)} \lesssim_{\varepsilon} \Bigl(\sum_{k\in\Z} \bigl\|e^{it(-\partial_x^2)^{\frac a2}}\sq{k}f(x)\bigr\|^{2}_{L^{4}_{t,x}(\eta_{I} \times \R)}\Bigr)^{\frac{1}{2}}
    \end{equation*}
    where $0\le \eta_{I}\in \mathcal{S}(\R^d)$ with $\eta_{I} \gtrsim 1$ on $I$. 
    We apply Proposition \ref{prop:weighted_local_smoothing} to each summand. Then it follows that 
    \begin{align*}
        \bigl\|e^{it(-\partial_x^2)^{\frac a2}}f(x)\bigr\|_{L^{4}_{t,x}(I \times \R)} &\lesssim_{\varepsilon} \Bigl(\sum_{k\in\Z} \bigl\|\sq{k}f(x)\bigr\|^{2}_{W^{\varepsilon, 4}(\R)}\Bigr)^{\frac{1}{2}}\\
        &\lesssim \|f\|_{M^{\varepsilon}_{4,2}(\R)}. 
    \end{align*}
\end{proof}

\subsection{Local well-posedness for the cubic NLS}\label{subsection wellposedness}
In this section, we present an application of our local smoothing estimate for the cubic fractional NLS: 
\begin{align}
    \begin{cases}
        i\partial_{t}u + (-\partial^2_x)^{\frac a2}u = \pm |u|^2 u\\
        u(\cdot, 0) = f(\cdot). 
    \end{cases}
\end{align}

As a corollary of our local smoothing estimate in modulation spaces, we obtain the following local well-posedness result. 
\begin{cor}
    Let $a > 2$, $\varepsilon > 0$, and $2\le p < \frac{8}{3}$. Then the cubic NLS is locally well-posed in $X\in \{ M^{\varepsilon}_{2,4}(\R),M^{\varepsilon}_{4,2}(\R), M^{\varepsilon}_{p,p}(\R),W^{\varepsilon, 4}(\R)\}$. 
\end{cor}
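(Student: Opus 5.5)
The plan is the standard contraction-mapping argument in a Bourgain-free auxiliary space built from the local smoothing estimates. The cubic term will be closed using the algebra property of modulation spaces, and the linear flow will be controlled through Corollaries~\ref{c:local_smoothing_modulation} and \ref{c:local_smoothing_modulation_lu}.

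\textbf{Step 1: the resolution space.} For each $X$ in the list, set
\[
Y_T:=C([0,T];X)\cap L^4_{t,x}([0,T]\times\R).
\]
Write the Duhamel formulation
\[
\Phi(u)(t)=e^{it(-\partial_x^2)^{a/2}}f\mp i\int_0^t e^{i(t-s)(-\partial_x^2)^{a/2}}\bigl(|u|^2u\bigr)(s)\,\d s.
\]
The linear part lies in $L^4_{t,x}([-1,1]\times\R)$:
\begin{enumerate}[(i)]
\item for $X=M^\varepsilon_{2,4}\hookrightarrow M^0_{2,4}$, by \eqref{eq:localsmoothing_Bilinear};
\item for $X=M^\varepsilon_{4,2}$, by Corollary~\ref{c:local_smoothing_modulation_lu};
\item for $X=W^{\varepsilon,4}$, by Proposition~\ref{prop:weighted_local_smoothing};
\item for $X=M^\varepsilon_{p,p}$ with $2\le p<8/3$, by complex interpolation between $M^{\varepsilon}_{2,2}$ and $M^{\varepsilon}_{4,2}$ (or $M^\varepsilon_{2,4}$), using the interpolation proposition for modulation spaces. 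The restriction $p<8/3$ should be exactly what makes the interpolated indices land in a region where one of the two $L^4$ bounds applies. I would check this numerology first.
\end{enumerate}
The linear flow is bounded on each $X$ locally in time. On modulation spaces this is the known unimodular multiplier bound $\|e^{it(-\partial_x^2)^{a/2}}\|_{M^s_{p,q}\to M^s_{p,q}}\lesssim \langle t\rangle^{c}$. On $W^{\varepsilon,4}$ it requires some care; I would fall back on $L^4_{t,x}$ control only.

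\textbf{Step 2: nonlinear estimates.} For the $C_tX$ part, I would use that $M^\varepsilon_{p,q}$ (for $\varepsilon>0$ and the given $q$) is an algebra under pointwise multiplication, or at least satisfies a trilinear bound. This gives
\[
\Bigl\|\int_0^t\cdots\Bigr\|_{C_TX}\lesssim T\langle T\rangle^c\|u\|_{C_TX}^3.
\]
For the $L^4_{t,x}$ part, I would combine Minkowski in $s$ with the linear local smoothing bound applied to $e^{i(t-s)(-\partial_x^2)^{a/2}}(|u|^2u)(s)$. This gives
\[
\Bigl\|\int_0^t\cdots\Bigr\|_{L^4_{t,x}}\lesssim\int_0^T\bigl\||u|^2u(s)\bigr\|_X\,\d s\lesssim T\|u\|_{C_TX}^3.
\]
Hence $\Phi$ maps a ball of $Y_T$ into itself and is a contraction for $T\sim\|f\|_X^{-2}$. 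The difference estimates follow the same pattern, giving existence, uniqueness in $Y_T$, and Lipschitz dependence on $f$.

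\textbf{Main obstacle.} The main obstacle is the algebra/trilinear estimate in each $X$ when $q>1$. $M^\varepsilon_{p,q}$ is an algebra only when $\varepsilon>1/q'$ (roughly), which fails for small $\varepsilon$. Consequently the $C_tX$ bound cannot rely on the algebra property alone. There I would use the $L^4_{t,x}$ norm instead: by Hölder, $\||u|^2u\|_{L^{4/3}_{t,x}}\le\|u\|_{L^4}^3$. I would then aim to prove a dual (inhomogeneous) local smoothing estimate $L^{4/3}_{t,x}\to C_tX$ by a $TT^*$ argument from the same corollaries. Making this dual estimate work, particularly for $M^\varepsilon_{4,2}$ and $W^{\varepsilon,4}$ where $TT^*$ is not directly available, is where I expect the real difficulty. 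If it fails, I would restrict to the $L^2$-based spaces and obtain the others by interpolation.
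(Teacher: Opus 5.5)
Your Step~2 does not close, and the trouble already starts in Step~1.

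\textbf{The resolution space.} For $a>2$ and $p\neq 2$ the free flow is \emph{not} bounded on $M^s_{p,q}$. For fixed $t\neq0$, $e^{it|\xi|^a}$ maps $M^s_{p,q}$ into $M_{p,q}$ only if $s\ge(a-2)\bigl|\frac12-\frac1p\bigr|$. From $W^{s,4}$ to $L^4$ it requires $s\ge\frac a4$. Hence, for small $\varepsilon$ and $X=M^\varepsilon_{4,2}$, $M^\varepsilon_{p,p}$ with $p>2$, or $W^{\varepsilon,4}$, you do not have $e^{it(-\partial_x^2)^{a/2}}f\in C([0,T];X)$. Avoiding exactly this fixed-time loss is why local smoothing is used, so $C([0,T];X)$ cannot be part of the resolution space.

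\textbf{The nonlinear estimates.} Independently, both halves of your Step~2 rely on $\||u|^2u\|_X\lesssim\|u\|_X^3$, which fails for small $\varepsilon$ in all four spaces. Your proposed repair, an estimate $L^{4/3}_{t,x}\to C_tX$ via $TT^*$, is not available either. Apart from $M^\varepsilon_{2,2}=H^\varepsilon$ these are not Hilbert spaces, and the adjoint of $X\to L^4_{t,x}$ maps $L^{4/3}_{t,x}$ into $X^*$, not into $X$. Interpolating well-posedness between $L^2$-based spaces is not a routine substitute, since the solution map is nonlinear.

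\textbf{The missing idea.} $X$ should enter only through the free evolution. The paper runs the contraction in $L^4_{t,x}([0,T]\times\R)$ alone. The linear term is controlled by the local smoothing estimates of your Step~1. The Duhamel term is controlled by the standard Lebesgue-space inhomogeneous Strichartz estimate
\[
\Bigl\|\int_0^t e^{i(t-s)(-\partial_x^2)^{a/2}}F(s)\,\mathrm{d}s\Bigr\|_{L^{4a}_tL^4_x}\lesssim\|F\|_{L^{4a/(4a-1)}_tL^{4/3}_x}.
\]
This follows from the dispersive bound $|t|^{-1/a}$ (valid for $a\ge2$), $TT^*$ and Christ--Kiselev, since $(4a,4)$ is admissible. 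Combined with H\"older in time on $[0,T]$ and your bound $\||u|^2u\|_{L^{4/3}_{t,x}}\le\|u\|^3_{L^4_{t,x}}$, it yields
\[
\Bigl\|\int_0^t e^{i(t-s)(-\partial_x^2)^{a/2}}(|u|^2u)(s)\,\mathrm{d}s\Bigr\|_{L^4_{t,x}([0,T]\times\R)}\lesssim T^{\frac12-\frac1{2a}}\|u\|^3_{L^4_{t,x}([0,T]\times\R)},
\]
together with the analogous difference estimate. The iteration therefore closes with no algebra property and no estimate into $X$. The solution is $e^{it(-\partial_x^2)^{a/2}}f$ plus a Duhamel part in $C([0,T];L^2)\cap L^{4a}_tL^4_x$; no persistence in $C_tX$ is claimed. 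You had the right H\"older step but aimed at the wrong target space.

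\textbf{The numerology in (iv).} Interpolating $M_{2,2}$ with $M_{4,2}$ or with $M_{2,4}$ only reaches $M_{p,2}$ or $M_{2,q}$. The relevant pair is $M^0_{2,4}$ and $M^\varepsilon_{4,2}$, whose segment $\frac1p+\frac1q=\frac34$ meets the diagonal at $p=\frac83$. The embedding $M^\varepsilon_{p,p}\hookrightarrow M^\varepsilon_{8/3,8/3}$ for $p\le\frac83$ then covers the stated range.
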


\begin{proof}
    By Corollary \ref{c:local_smoothing_modulation}, \ref{c:local_smoothing_modulation_lu}, the interpolation for modulation spaces, and Theorem \ref{thm:rogers_seeger}, we already have corresponding linear estimates
    \begin{equation*}
        \|e^{it(-\partial_x^2)^{\frac a2}}f(x)\|_{L^{4}_{t,x}([0,T]\times \R)} \leq C_{T}\|f\|_{X}. 
    \end{equation*}
    On the other hand, the stationary phase method, $TT^{*}$ argument, and the Christ-Kiselev lemma invoke that the nonlinear estimate 
    \begin{equation*}
        \Bigl\|\int^{t}_{0}e^{i((t- s)(-\partial^2_x)^{\frac a2}}F(s)ds\Bigr\|_{L^{4a}_{t}L^{4}_{x}([0,T]\times \R)}\lesssim \|F\|_{L^{\frac{4a}{4a-1}}_{t}L^{\frac{4}{3}}_{x}([0,T]\times \R)}
    \end{equation*}
    holds. Then by applying the standard iteration argument, we obtain the desired result. 
\end{proof}

\appendix

\section{van der Corput's lemma}

Here we recall a classical lemma for oscillatory integrals. For further details, see for instance \cite{bigstein}.

\begin{lem}[van der Corput's lemma]\label{l:van der Corputs}
Let $-\infty<a<b<\infty$. Suppose $\phi$ is sufficiently smooth real-valued function in $(a,b)$, and that $|\phi^{(k)}(x)|\geq1$ for all $x\in (a,b)$. If $k=1$ and $\phi'$ is monotonic on $(a,b)$, or simply $k\geq2$, then there exists a constant $c_k>0$ (independent of $\phi$ and $\lambda$) such that 
\[
\Big|\int_a^b e^{i\lambda\phi(x)}\,\d x\Big|
\leq 
\lambda^{-\frac1k}.
\]
\end{lem}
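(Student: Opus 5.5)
The estimate follows from the standard scaling argument. I would reduce to the normalized case and then integrate by parts.

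\emph{Normalization.} Write $\mu := \inf_{(a,b)}|\phi^{(k)}|$. Replacing $\phi$ by $\phi/\mu$ and $\lambda$ by $\lambda\mu$, it suffices to assume $|\phi^{(k)}|\ge 1$ on $(a,b)$ and prove
\[
\Big|\int_a^b e^{i\lambda\phi}\,\d x\Big|\le c_k\lambda^{-1/k},
\]
with $c_k$ independent of $\phi$, $\lambda$, $a$ and $b$. The printed statement omits $c_k$ on the right-hand side; I read it as intended.

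\emph{Base case $k=1$.} Write
\[
e^{i\lambda\phi}=\frac{1}{i\lambda\phi'}\,\frac{\d}{\d x}e^{i\lambda\phi}
\]
and integrate by parts. The boundary terms contribute at most $2/\lambda$. The remaining term is
\[
\frac1\lambda\int_a^b \Big|\Big(\frac1{\phi'}\Big)'\Big|\,\d x .
\]
Since $\phi'$ is monotone, $1/\phi'$ is monotone, so the derivative has constant sign. The integral therefore telescopes to $\bigl|1/\phi'(b)-1/\phi'(a)\bigr|\le 1$. Altogether we get the bound $3/\lambda$.

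\emph{Induction step.} Assume the claim for $k$ and let $|\phi^{(k+1)}|\ge1$. Then $\phi^{(k)}$ is monotone, so there is at most one point $x_0\in[a,b]$ where $|\phi^{(k)}|$ attains its minimum. For $|x-x_0|\ge\varepsilon$ we have $|\phi^{(k)}(x)|\ge\varepsilon$.

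\begin{itemize}
\item On the set $\{|x-x_0|\ge\varepsilon\}$, which is at most two intervals, apply the inductive hypothesis to $\phi/\varepsilon$. Each interval contributes at most $c_k(\lambda\varepsilon)^{-1/k}$.
\item On the interval $\{|x-x_0|<\varepsilon\}$, use the trivial bound $2\varepsilon$.
\end{itemize}

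This gives the total bound $2c_k(\lambda\varepsilon)^{-1/k}+2\varepsilon$. Choosing $\varepsilon=\lambda^{-1/(k+1)}$ yields $c_{k+1}\lambda^{-1/(k+1)}$.

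When $k=1$ is used inside the induction, the monotonicity hypothesis holds automatically: $\phi^{(2)}$ has constant sign, so $\phi'$ is monotone. This is why the statement needs the monotonicity assumption only when $k=1$ is applied directly.

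\emph{Main obstacle.} The delicate point is the $k=1$ base case. Without monotonicity, the integral of $|(1/\phi')'|$ need not be bounded. Everything else is bookkeeping on the excised interval.

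The sublevel-set version used in Lemma~\ref{lem:overlap} follows by the same induction. The statement is $|\{|u|\le\lambda\}|\lesssim_k \lambda^{1/k}$ when $|u^{(k)}|\ge1$. Rescaling to $|u''|\gtrsim r^{a-2}$ then gives $|U|\lesssim(\lambda r^{2-a})^{1/2}$.
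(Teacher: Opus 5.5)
Your proof is correct and is the standard argument from Stein's book, which the paper cites for this lemma instead of proving it: integration by parts for $k=1$ using monotonicity of $1/\phi'$, then induction by excising an $\varepsilon$-neighbourhood of the minimum point of $|\phi^{(k)}|$ and choosing $\varepsilon=\lambda^{-1/(k+1)}$. You are also right that the displayed bound should read $c_k\lambda^{-1/k}$, and the only step you leave implicit is harmless: $|\phi'|\ge1$ forces $\phi'$ to have constant sign, which is what makes $1/\phi'$ monotone and gives $\bigl|1/\phi'(b)-1/\phi'(a)\bigr|\le 1$.
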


Using this, one can derive the following bound for level sets. For the further discussion, see \cite{carbery_christ_wright1999}. 

\begin{lem}[van der Corput's lemma in sublevel set form]\label{l:vandercoroput sublevel}
For each $k\geq1$, there exists a constant $C_k$ such that for any function satisfying $|u^{(k)}|\geq1$ for all $x$,
\[
\bigl|\bigl\{t\in \R:|u(t)|\leq \alpha\bigr\}\bigr|
\leq 
C_k
\alpha^{\frac1k}.
\]
\end{lem}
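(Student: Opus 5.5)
The plan is to reduce to a monotone function via its critical points and then apply a change of variables. I take the hypothesis to mean that $|u^{(k)}|\ge 1$ on an interval and that $u$ is $C^k$, and I argue by induction on $k$.

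\emph{Base case $k=1$.} Here $|u'|\ge 1$, so $u'$ does not vanish and by continuity has constant sign. Thus $u$ is strictly monotone. The set $\{|u|\le\alpha\}$ is then an interval $[t_1,t_2]$ (possibly empty). By the mean value theorem,
\[
2\alpha\ \ge\ |u(t_2)-u(t_1)|\ \ge\ |t_2-t_1|,
\]
which gives the claim with $C_1=2$.

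\emph{Inductive step, $k\ge2$.} Assume the statement holds for $k-1$, and fix a parameter $\beta>0$ to be optimized later. Since $u^{(k)}$ has constant sign, $u^{(k-1)}$ is strictly monotone, so it can vanish at most at one point $t_0$. The set $\{|u^{(k-1)}|<\beta\}$ is an interval $J$; because $|u^{(k)}|\ge1$, the mean value theorem again gives $|J|\le 2\beta$.

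The complement of $J$ consists of at most two intervals. On each of them $|u^{(k-1)}|\ge\beta$, so the rescaled function $u/\beta$ satisfies $|(u/\beta)^{(k-1)}|\ge1$ there. Applying the inductive hypothesis to $u/\beta$ with threshold $\alpha/\beta$ on each piece shows that the sublevel set has measure at most $2C_{k-1}(\alpha/\beta)^{1/(k-1)}$ on the complement of $J$. (The inductive hypothesis applies on any interval, since the argument above never used that the domain was all of $\R$.)

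Adding the two contributions gives
\[
\bigl|\{|u|\le\alpha\}\bigr|\ \le\ 2\beta+2C_{k-1}(\alpha/\beta)^{1/(k-1)}.
\]
Choosing $\beta=\alpha^{1/k}$ balances the two terms, because then $(\alpha/\beta)^{1/(k-1)}=\alpha^{1/k}$. This yields the bound $C_k\alpha^{1/k}$ with $C_k=2+2C_{k-1}$.

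The only delicate point is confirming that the inductive hypothesis may be applied on subintervals, and that the number of pieces stays bounded. Both follow from the monotonicity of $u^{(k-1)}$, which is exactly what ensures the complement of $J$ has at most two components.
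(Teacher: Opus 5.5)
Your proof is correct, but it follows a different route from the paper.

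\textbf{The paper's route.} The paper does not work on the sublevel set directly. It takes the oscillatory van der Corput lemma as a black box and sets $\lambda=(100\alpha)^{-1}$. It then splits $E_\alpha=\{|u|\le\alpha\}$ into boundedly many intervals, since $u\mp\alpha$ has at most $k$ zeros, and bounds $\bigl|\int_{E_\alpha}e^{i\lambda u}\bigr|\lesssim\lambda^{-1/k}\sim\alpha^{1/k}$ interval by interval. Finally it uses positivity, $\cos(\lambda u)\ge\cos(1/100)$ on $E_\alpha$, to get $|E_\alpha|\lesssim\bigl|\int_{E_\alpha}e^{i\lambda u}\bigr|$.

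\textbf{Your route.} You run the induction on $k$ at the level of sublevel sets. You discard the interval $\{|u^{(k-1)}|<\beta\}$, of length at most $2\beta$, and apply the $(k-1)$ case to $u/\beta$ on the at most two remaining intervals. This is essentially how the oscillatory lemma itself is usually proved, so your argument is more elementary and self-contained. It also has these advantages:
\begin{itemize}
\item it yields explicit constants $C_k=2^{k+1}-2$;
\item it needs no separate count of the components of $E_\alpha$;
\item it is stated on arbitrary intervals, which is what Lemma~\ref{lem:overlap} actually uses, since there $u$ lives on $[0,r]$ rather than on all of $\R$;
\item it settles $k=1$ by the mean value theorem alone, whereas the paper's reduction at $k=1$ tacitly needs the monotonicity of $u'$ that the oscillatory lemma requires in that case.
\end{itemize}

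\textbf{What the paper's route buys.} It is very short once the oscillatory estimate is available. It also illustrates the general principle that decay of $\int e^{i\lambda u}$ controls the sublevel sets of $u$.
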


\begin{proof}
Let $E_\alpha=\{t\in\R: |u(t)|\leq \alpha\}$ and $\lambda=(100\alpha)^{-1}$. By the assumption $|u^{(k)}(t)|\geq 1$, it follows that $u(t)$ has at most $k$ solutions so that $E_\alpha$ consists of $k$ intervals, i.e. $E_\alpha=\bigcup_{j=1}^k I_j$. By Lemma~\ref{l:van der Corputs}, it follows that 
\begin{align*}
    \Big|\int_{E_\alpha} e^{i\lambda u(t)}\,\d t\Big|
    \leq
    \sum_{j=1}^{k} \Big| \int_{I_j}e^{i\lambda u(t)}\,\d t\Big|
    \lesssim 
    \alpha^{\frac1 k}.
\end{align*}

On the other hand, since $\lambda |u(t)|\leq 100^{-1}$, 
\begin{align*}
    \Big|\int_{E_\alpha} e^{i\lambda u(t)}\,\d t\Big|
    \geq 
    \Big|\int_{E_\alpha} \cos(\lambda u(t))\,\d t\Big|
    \gtrsim
    |E_\alpha|.
\end{align*}

\end{proof}

\section{Alternative proof of Theorem~\ref{t:main}}

In this appendix, we present an alternative proof for our main theorem without using van der Corput's lemma. For simplicity, we only consider the following special case of Theorem~\ref{t:main}.

\begin{thm}
    Let $a \in (0,\infty)\backslash\{1\}$. Let $\delta \in (0,1)$ and $F:\R^2\to \C$ be a function with $\supp \widehat{F}\in \Gamma_{a}(\delta)$. 
    \begin{itemize}
        \item For $a \ge 2$, 
        \[
        \|F\|_{L^{4}(\R^2)} \lesssim \Bigl\|\Bigl(\sum_{k\in\Z}|F_{k}|^{2}\Bigr)^{\frac 12}\Bigr\|_{L^4 (\R^2)}
        \]
        holds where $\widehat{F}_{k}(\xi) := \widehat{F}(\xi)1_{[k-\delta^{1/a}, k+\delta^{1/a}]}(\xi_1)$. 

        \item For $a\in (0,2)\backslash\{1\}$, \[
        \|F\|_{L^{4}(\R^2)} \lesssim \Bigl\|\Bigl(\sum_{k\in\Z}|F_{k}|^{2}\Bigr)^{\frac 12}\Bigr\|_{L^4 (\R^2)}
        \]
        holds where $\widehat{F}_{k}(\xi) := \widehat{F}(\xi)1_{[k-\delta^{1/2}, k+\delta^{1/2}]}(\xi_1)$
    \end{itemize}
\end{thm}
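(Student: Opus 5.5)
The plan follows the main proof but replaces the sublevel-set bound of Lemma~\ref{lem:overlap} by an elementary computation with the mean value theorem. Set $\sigma=\delta^{1/a}$ when $a\ge2$ and $\sigma=\delta^{1/2}$ when $a\in(0,2)\setminus\{1\}$. As in the proof of Theorem~\ref{t:main}, we first reduce to $\operatorname{supp}\widehat F\subset\Gamma_a(\delta)\cap((0,\infty)\times\R)$. Plancherel and Cauchy--Schwarz then give $\|F\|_4^4\le N\,\|(\sum|F_k|^2)^{1/2}\|_4^4$, where $N$ is the maximal overlap of the Minkowski sums $\theta_k+\theta_{k'}$. (Here we read the indexing $k$ as ranging over $\sigma\Z$, i.e.\ centers $\omega=\sigma k$.) It therefore suffices to show that for every $\xi$ the number of pairs $(\omega,\omega')\in(\sigma\Z\cap[0,1])^2$ with $\xi\in\theta_\omega+\theta_{\omega'}$ is $O(1)$.

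Fix $\xi$ with $\xi_1=r\in[0,2]$. Any admissible pair satisfies $\eta+\eta'=r$ and $\eta^a+\eta'^a\in\xi_2+[-2\delta,2\delta]$, with $|\eta-\omega|,|\eta'-\omega'|\le\sigma$. Since $\omega+\omega'$ lies within $2\sigma$ of $r$, there are $O(1)$ choices of $\omega+\omega'$ for a given $\omega$. It remains to bound the number of admissible $\omega$, and for this it is enough to show that the set of $t\in[0,r]$ with $u(t):=t^a+(r-t)^a\in\xi_2+[-2\delta,2\delta]$ has measure $O(\sigma)$.

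We use symmetry about $t=r/2$ and split into two cases.

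\emph{Case $r\lesssim\sigma$.} The set lies in $[0,r]$, so its length is trivially $O(\sigma)$.

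\emph{Case $r\gg\sigma$.} We compute $u'(t)=a\bigl(t^{a-1}-(r-t)^{a-1}\bigr)$. By the mean value theorem this equals $a(a-1)\,\zeta^{a-2}\,(2t-r)$ for some $\zeta$ between $t$ and $r-t$, and $\zeta\lesssim r$. On the half $t\in[r/2,r]$ the function $u$ is monotone, so the preimage of an interval of length $4\delta$ is a single interval $J$.

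If $J$ contains a point with $|t-r/2|\ge s$, then by monotonicity $u$ changes by at least $\int_{r/2}^{r/2+s}|u'|$ across it. For $a\ge2$ we have $|u'(t)|\gtrsim|t-r/2|^{a-1}$, which follows from $\zeta\ge|t-r/2|$ together with $t^{a-1}-(r-t)^{a-1}\ge(2t-r)^{a-1}$ for $t\ge r/2$ and $a\ge2$. Integrating, $u$ changes by $\gtrsim s^a$, so any subinterval of $J$ where $|t-r/2|\in[s,2s]$ has length $\lesssim\delta/s^{a-1}$. Summing dyadically, $|J|\lesssim\delta^{1/a}$.

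For $a<2$ we instead have $\zeta^{a-2}\gtrsim r^{a-2}\gtrsim1$ since $r\le2$, so $|u'|\gtrsim|t-r/2|$. Integrating gives a change $\gtrsim s^2$, and the same dyadic argument yields $|J|\lesssim\delta^{1/2}$.

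Doubling for the symmetric half gives $O(\sigma)$ in both cases.

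Finally, the number of $\sigma$-separated centers $\omega$ within $\sigma$ of a set of length $O(\sigma)$ that is a union of at most two intervals is $O(1)$. Hence $N\lesssim1$, which proves the theorem.

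The main obstacle is the lower bound on $|u'|$ for $a\ge2$, namely the elementary inequality $t^{a-1}-(r-t)^{a-1}\ge(2t-r)^{a-1}$; it holds by superadditivity of $x\mapsto x^{a-1}$ when $a-1\ge1$. The proof must also use $r\le2$ for $a<2$, which is exactly why the scale becomes $\delta^{1/2}$ in that regime.
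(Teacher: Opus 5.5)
Your argument is correct, but for this statement it takes a different route from the paper's. You keep the architecture of the proof of Theorem~\ref{t:main}: Plancherel, Cauchy--Schwarz, and the pointwise overlap count $N$ of the sums $\theta_\omega+\theta_{\omega'}$. The only change is that you replace Lemma~\ref{lem:overlap} by a hands-on bound for the sublevel set $E=\{t\in[0,r]:|t^a+(r-t)^a-\xi_2|\le 2\delta\}$:
\begin{itemize}
\item monotonicity of $u$ on each half of $[0,r]$ makes $E$ a union of at most two intervals;
\item the first-derivative bounds $|u'(t)|\gtrsim|t-r/2|^{a-1}$ (for $a\ge2$) and $|u'(t)|\gtrsim|t-r/2|$ (for $a<2$, using $r\le2$), summed over dyadic shells around $r/2$, give $|E|\lesssim\delta^{1/a}$, respectively $\delta^{1/2}$, uniformly in $r$.
\end{itemize}

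The appendix proof instead expands $\|F\|_4^4$ as a sum over quadruples and orders $\tilde\xi_1\ge\tilde\xi_3\ge\tilde\xi_4\ge\tilde\xi_2$. It then uses the second derivative of $t\mapsto t^a+(M-t)^a$ to obtain the factorized lower bound
\[
\tilde\xi_1^a+\tilde\xi_2^a-\tilde\xi_3^a-\tilde\xi_4^a\gtrsim M^{a-2}(\tilde\xi_1-\tilde\xi_3)(\tilde\xi_1-\tilde\xi_4).
\]
This is an analogue of the C\'ordoba--Fefferman identity for the parabola, and it shows directly that interacting quadruples are near-diagonal. That route needs the restriction $M\gtrsim\delta^{1/a}$ and separate treatment of $a\in[2,3]$, $a>3$, and $a<2$ (via Jensen).

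Your version is more uniform. The single superadditivity inequality $t^{a-1}-(r-t)^{a-1}\ge(2t-r)^{a-1}$ handles all $a\ge2$ at once. Also, your bound on $|E|$ does not depend on the block size. Running the same count with $2\delta^{1/b}$-separated blocks gives $N\lesssim 1+|E|\delta^{-1/b}$, which recovers the full Theorem~\ref{t:main} for every $b$, not just the critical scale. In return, the appendix's factorization carries more structural information than a bare overlap count: it gives quantitative near-diagonality of the quadruples.

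Two small points:
\begin{itemize}
\item Your claim $\zeta\ge|t-r/2|$ for the mean-value point is false in general. For $a$ slightly above $2$ and $t=r$ one gets $\zeta=r(a-1)^{-1/(a-2)}\approx r/e<r/2$. It is not needed, though: superadditivity alone gives $|u'(t)|\ge a(2t-r)^{a-1}$ for $t\ge r/2$.
\item Reading the index set as $\sigma\Z$ with half-width $\sigma$ makes the blocks overlap twice. This only costs a constant, and you can avoid it by using $2\sigma\Z$.
\end{itemize}
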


\begin{proof}[Sketch of the proof]
    By $L^4$ expansion as in the proof of Theorem \ref{t:main}, it holds that 
    \[
    \int_{\R^2}\Bigl|\sum_{k}F_{k}\Bigr|^{4}dx
        = \int_{\R^2}\Bigl(\sum_{k_1, k_2}\wdh{F_{k_1}}*\wdh{F_{k_2}}\Bigr)\overline{\Bigl(\sum_{k_3, k_4}\wdh{F_{k_3}}*\wdh{F_{k_4}}\Bigr)}d\xi. 
    \]
    To show our theorem, we need to count the number of quadruples $(k_1,k_2,k_3,k_4)$ for which $\supp \wdh{F_{k_1}}*\wdh{F_{k_2}}$ and $\supp\overline{\wdh{F_{k_3}}*\wdh{F_{k_4}}}$ overlap each other. For this sake, it is enough to consider the system 
    \begin{align*}
        \begin{cases}
            \tilde\xi_1 + \tilde\xi_2 = \tilde\xi_3 + \tilde\xi_4\\
            \tilde\xi^a_1 + \tilde\xi^a_2 = \tilde\xi^a_3 + \tilde\xi^a_4 + O(\delta)
        \end{cases}
    \end{align*}
    where $(\tilde\xi_i , \tilde\xi^a_i )\in\supp \widehat{F}_{i}$ for $i = 1,2,3,4$. By commuting the convolution product and taking the complex conjugate, we may assume 
    \[
    \tilde\xi_1\ge\tilde\xi_3 \ge\tilde\xi_4\ge \tilde\xi_2. 
    \] 
    \medskip
    
    \noindent\textbf{Case $a\in [2,3]$.} 
    
    Our task is to deduce $|\tilde\xi_1 - \tilde\xi_3| \lesssim \delta^{\frac{1}{a}}$ or $|\tilde\xi_1 - \tilde\xi_4| \lesssim \delta^{\frac{1}{a}}$. So, we also may assume that $\tilde{\xi}_1 + \tilde{\xi}_2 \gtrsim \delta^{\frac{1}{a}}$ otherwise $\tilde\xi_i$ are coming from blocks that are $\delta\times \delta^{\frac{1}{a}}$-close to the origin (the number of such blocks is $O(1)$).
    Set $M := \tilde{\xi}_1 + \tilde{\xi}_2$ and set a function $f(\,\cdot\,;a)$ as
    \begin{equation*}
        f(t;a) = t^a + (M - t)^a
    \end{equation*}
    for $t \in [0, M]$. Then the derivatives of $f(\,\cdot\,;a)$ are
    \begin{align*}
        \partial_t f(t;a) = a(t^{a-1} - (M - t)^{a-1}),\\
        \partial_t^2 f(t;a) = a (a-1) (t^{a-2} + (M - t)^{a-2}). 
    \end{align*}
    Note that $\partial_t f(\frac{M}{2};a) = 0$ and 
    $\partial_t^2 f(t;a) \geq a(a -1)M^{a-2}$ for all $t\in [0,M]$
    because $0 < a-2 < 1$. Then 
    \begin{align*}
        &\tilde{\xi}^a_1 + \tilde{\xi}^a_2 - \tilde{\xi}^a_3 - \tilde{\xi}^a_4\\
        &= \int^{\tilde{\xi}_1}_{\tilde{\xi}_3} \partial_t f(t;a)\,dt\\
        &= \int^{\frac{\tilde{\xi}_1 - \tilde{\xi}_3}{2}}_{-\frac{\tilde{\xi}_1 - \tilde{\xi}_3}{2}}
        \partial_t f\Bigl(u + \frac{M}{2} +\frac{\tilde{\xi}_1 - \tilde{\xi}_4}{2};a \Bigr)\,du .
    \end{align*}
    Here in the second equality, we changed the variables as
    $t \to u + \frac{\tilde{\xi}_1 + \tilde{\xi}_3}{2}$ and used the relation
    \[
    \frac{\tilde{\xi}_1 + \tilde{\xi}_3}{2} - \frac{M}{2}
    = \frac{\tilde{\xi}_1 + \tilde{\xi}_3}{2} - \frac{\tilde{\xi}_3+ \tilde{\xi}_4}{2}
    = \frac{\tilde{\xi}_1 - \tilde{\xi}_4}{2}.
    \]
    We write
    \[
    h(x;a) := \partial_t f\Bigl(x + \frac{M}{2};a\Bigr),
    \qquad
    x\in \Bigl[-\frac{M}{2}, \frac{M}{2}\Bigr].
    \]
    Obviously, $h(\,\cdot\,;a)$ is odd and $h(0;a) = 0$. Hence, the above equality becomes
    \begin{align*}
        &= \int^{\frac{\tilde{\xi}_1 - \tilde{\xi}_3}{2}}_{-\frac{\tilde{\xi}_1 - \tilde{\xi}_3}{2}}
        \Bigl[
        h\Bigl(u + \frac{\tilde{\xi}_1 - \tilde{\xi}_4}{2};a\Bigr)
        - h(u;a)
        \Bigr]\,du .
    \end{align*}
    We apply the mean value theorem to the integrand and get
    \begin{equation*}
        h\Bigl(u + \frac{\tilde{\xi}_1 - \tilde{\xi}_4}{2};a\Bigr)
        - h(u;a)
        = \frac{\tilde{\xi}_1 - \tilde{\xi}_4}{2}\,\partial_x h(\theta;a)
    \end{equation*}
    for some $\theta\in \Bigl(u,u + \frac{\tilde{\xi}_1 - \tilde{\xi}_4}{2}\Bigr)$.
    On the other hand, we have the pointwise bound
    \begin{align*}
        \partial_x h(t;a)
        &= \partial_t^2 f\Bigl(t + \frac{M}{2};a\Bigr)\\
        &\ge a (a-1)M^{a-2}
    \end{align*}
    for all $t\in \Bigl[-\frac{M}{2}, \frac{M}{2}\Bigr]$. Therefore, it follows that 
    \begin{equation*}
        \tilde{\xi}^a_1 + \tilde{\xi}^a_2 - \tilde{\xi}^a_3 - \tilde{\xi}^a_4
        \ge \frac{a(a-1)}{2}M^{a-2}
        (\tilde{\xi}_1 -\tilde{\xi}_3)(\tilde{\xi}_1 - \tilde{\xi}_4).
    \end{equation*}
    Combining this and the second condition
    $|\tilde{\xi}^a_1 + \tilde{\xi}^a_2 - \tilde{\xi}^a_3 - \tilde{\xi}^a_4|
    \lesssim \delta$,
    we obtain
    \begin{equation*}
        M^{a-2}
        |\tilde{\xi}_1 - \tilde{\xi}_3|
        |\tilde{\xi}_1 - \tilde{\xi}_4|
        \lesssim \delta .
    \end{equation*}
    Note that we assumed
    $M = \tilde{\xi}_1 + \tilde{\xi}_2 \gtrsim \delta^{\frac{1}{a}}$.
    Then one has
    \begin{equation*}
        |\tilde{\xi}_1 - \tilde{\xi}_3|
        |\tilde{\xi}_1 -\tilde{\xi}_4|
        \lesssim \delta \cdot \delta^{\frac{2}{a} - 1}
        = \delta^{\frac{2}{a}} .
    \end{equation*}
    This immediately gives
    $|\tilde{\xi}_1 - \tilde{\xi}_3| \lesssim \delta^{\frac{1}{a}}$
    or
    $|\tilde{\xi}_1 - \tilde{\xi}_4| \lesssim \delta^{\frac{1}{a}}$.
    The implicit constants in these inequalities do not depend on $\delta$.

    \medskip
    \noindent\textbf{Case $a > 3$.}

    For this case, we use the inequality
    \[
    \tilde{\xi}^a_1 + \tilde{\xi}^a_2 - \tilde{\xi}^a_3 - \tilde{\xi}^a_4
    \geq
    M\frac{a}{a-2}
    \bigl(
    \tilde{\xi}^{a-1}_1 + \tilde{\xi}^{a-1}_2
    - \tilde{\xi}^{a-1}_3 - \tilde{\xi}^{a-1}_4
    \bigr)
    \]
    and hence reduce the problem to the previous case. This inequality can be shown rewriting the left-hand side by the integration of $f(t;a)$ and considering an inequality \[
    \partial_{t}f(t;a) \geq \frac{M}{2}\frac{a}{a-2} \partial_{t} f(t;a-1) \quad \left(t\in \left[\frac{M}{2},M\right]\right).
    \]

    \medskip
    \noindent\textbf{Case $a\in (0,2)\backslash \{1\}$. }

    As in the previous cases, we may assume $\tilde\xi_1 + \tilde\xi_2 \gtrsim \delta^{\frac{1}{2}}$. 
    For this case, we consider the inequality
    \[
    \delta \gtrsim \tilde{\xi}^a_1 + \tilde{\xi}^a_2 - \tilde{\xi}^a_3 - \tilde{\xi}^a_4 = \int^{\tilde\xi_{1}}_{\tilde\xi_{3}}\partial_{t}f \left(t ;a \right)dt
    \]
    and apply the Jensen inequality to this. Then we get $|\tilde\xi_1 - \tilde\xi_3|\lesssim \delta^{\frac 12}$ and we obtain the desired estimate. 
\end{proof}

\section*{Acknowledgments}
The first and the third author are supported in part by the Croatian Science Foundation under the project IP-2022-10-5116 (FANAP).
The second author would like to thank Prof. Mitsuru Sugimoto for helpful discussions and valuable comments. He also would like to thank Prof. Neal Bez for pointing out the classical proof for the bilinear Strichartz estimate. He was supported by Grant-in-Aid for JSPS Fellows No. 24KJ1228. 

\section*{Declarations}

\subsection*{Data availability}
This work has no associated data. 

\subsection*{Conflict of interest}
On behalf of all authors, the corresponding author states that there is no conflict of interest.

\bibliographystyle{abbrvurl}
\bibliography{ref}

\end{document}